\newtheorem{theorem}{Theorem}[section]
\newtheorem{lemma}[theorem]{Lemma}
\newtheorem{prop}[theorem]{Proposition}
\newtheorem*{mtheorem}{Theorem 2}
\newtheorem*{theorem1}{Theorem 1}
\newtheorem*{theorem3}{Theorem 4}
\newtheorem{corollary}[theorem]{Corollary}
\newtheorem*{T3}{Theorem 3}
\theoremstyle{definition} 
\newtheorem{definition}{Definition}[section]
\newtheorem{remark}[theorem]{Remark}
\theoremstyle{remark} 
\newcommand{\ov}{\overline}
\newcommand{\wt}{\widetilde}
\newcommand {\xz}{X^{(0)}}
\newcommand {\xj}{X^{(1)}}
\newcommand {\dw}{d_{\mathcal W}}
\newcommand {\ag}{A(\gamma)}
\newcommand {\gi}{\mr{girth} \,}
\newcommand {\di}{\mr{diam} \,}
\newcommand {\bds}{($\beta,\Phi$)--se\-pa\-ration }
\newcommand {\mr}{\mathrm}
\newcommand {\cA}{\mathcal A}
\newcommand {\tl}{l}
\newcommand {\txT}{\Theta}
\newcommand {\stxT}{\Theta}
\newcommand {\stxm}{m}
\newcommand {\txxT}{\Theta}
\newcommand {\txxml}{m}
\newcommand {\hxxT}{\wt {\Theta}}
\newcommand {\hxxm}{\wt m}
\newcommand {\stxxT}{\widehat{\Theta}}
\newcommand {\stxxm}{\widehat{m}}
\newcommand {\tn}{\Theta_n}
\newcommand {\bN}{\mathbb N}
\begin{document}

\title[Small cancellation labellings of graphs and applications]{Small cancellation labellings of some infinite graphs and applications}

\author{Damian Osajda}
\address{Instytut Matematyczny,
Uniwersytet Wroc\l awski\\
pl.\ Grunwaldzki 2/4,
50--384 Wroc{\l}aw, Poland}
\address{Universit\"at Wien, Fakult\"at f\"ur Mathematik\\
Oskar-Morgenstern-Platz 1, 1090 Wien, Austria.
}
\email{dosaj@math.uni.wroc.pl}
\subjclass[2010]{{20F69, 20F06, 46B85, 05C15}} \keywords{Small cancellation, coarse embedding, Property A, CAT(0) cubical complex, graph coloring}


\begin{abstract}
We construct small cancellation labellings for some infinite se\-quen\-ces of finite graphs of bounded degree.  
We use them to define infinite graphical small cancellation presentations of groups.
This technique allows us to
provide examples of groups with exotic properties:

$\bullet$ We construct the first examples of finitely generated coarsely non-amenable groups (that is, groups without Guoliang Yu's Property A) that 
are coarsely embeddable into a Hilbert space. Moreover, our groups act properly on CAT(0) cubical complexes.

$\bullet$ We construct the first examples of finitely generated groups, with expanders embedded isometrically into their Cayley graphs -- in contrast, in the case of the Gromov monster expanders are not even coarsely embedded.

We present further applications. 
\end{abstract}

\maketitle

\section{Introduction}
\label{s:intro}
The main goal of this article is to present a technique of constructing finitely generated groups such that given (infinite) graphs embed isometrically into their Cayley graphs.
This allows one to obtain groups with some features resembling the ones of those graphs. In particular, we construct groups without Guoliang Yu's property A that are coarsely embeddable into a Hilbert space (see Subsection~\ref{s:inA} in this Introduction below), and we construct groups, into whose Cayley graphs some expanders embed isometrically (see Subsection~\ref{s:inGr}). The latter groups are therefore not coarsely embeddable into Hilbert spaces, and various versions of the Baum-Connes conjecture fail for them. The general tool we use is the graphical small cancellation theory, and the main technical point is then finding appropriate small cancellation labellings of the graphs in question (see the next Subsection~\ref{s:inG}).

\subsection{Small cancellation labellings of some graphs}
\label{s:inG}
A labelling of a graph may be seen as an assignment of labels to directed edges; see details in Section~\ref{s:constr}. 
A labelling satisfies some small cancellation condition when no labelling of a long path (long with respect to the girth) appears in two different places; see Subsection~\ref{s:fscl}.
For our purposes we are interested in a finite set of labels, and in graphs being infinite disjoint unions of finite graphs with degree bounded uniformly. Examples are sequences of finite $D$--regular graphs, for a fixed degree $D>2$. For such graphs the only `small cancellation' labelling
provided till now was the famous Gromov labelling of some expanders \cite{Gro} (cf.\ some explanations of this construction in \cites{AD,Cou}). 
Gromov's labelling is in a sense generic, and as such cannot satisfy the small cancellation condition we work with (see the discussion in Subsection~\ref{s:discus}). Therefore Gromov's labelling defines a weak embedding in the sense of \cite[Definition 7.2]{Ost2013}, but not a coarse embedding of the graphs (relators) into the corresponding group (see Subsection~\ref{s:discus} for details). (Recall that a map $f\colon (X,d_X) \to (Y,d_Y)$ between metric spaces is a \emph{coarse embedding} when $d_Y(f(x_n),f(y_n))\to \infty$ iff $d_X(x_n,y_n)\to \infty$ for all sequences $(x_n)$, $(y_n)$.)
We study sequences $(\Theta_n)_{n\in \bN}$ of finite graphs of uniformly bounded degree, with growing girth, and diameters bounded in terms of girth (see Section~\ref{s:constr} for details). For them, we construct labellings satisfying much more restrictive conditions then the Gromov labellings do.

\begin{theorem1}[see Theorem~\ref{l:c2LLL} in the text]
For every $\lambda >0$ there exists a $C'(\lambda)$--small cancellation labelling of $(\Theta_n)_{n\in \bN}$ over a finite set of labels. 
\end{theorem1}

It is well known (see e.g.\ \cite{Gro,Oll}) that satisfying such strong small cancellation condition implies that for groups that we construct using this labelling, 
the graphs $\tn$ are isometrically embedded into the Cayley graphs.

For constructing the desired labellings we use techniques coming from combinatorics (graph colorings) \cite{AloGry} and relying on the Lov\' asz Local Lemma (see e.g.\ \cite{AloSpe}). This is a novelty in the subject.
Note that whereas the core of our method is probabilistic (similarly as Gromov's techniques), there is a fundamental difference with Gromov's approach: We look for any labelling with required properties, while in the other method the properties of the generic labelling are explored. This is crucial for getting stronger features, as explained in Subsection~\ref{s:discus}. 
The tools used in both approaches are different. Our argument is also relatively short (pp.\ 5--14 below) compared to Gromov's one as presented in \cite{AD}. 

Below we describe the actual applications of the small cancellation labellings we construct. Nevertheless, we believe that the construction itself, and the overall combinatorial technique developed in this article, are important tools that will find many applications beyond the scope 
presented here. 

\subsection{Non-exact groups with the Haagerup property}
\label{s:inA}
{Property A}, or \emph{coarse amenability}, was introduced by Guoliang Yu \cite{Yu} for his studies on the Baum-Connes conjecture.
A uniformly discrete metric space $(X,d)$ has \emph{property A} if for every $\epsilon >0$ and $R>0$ there exist a collection of finite subsets
$\{ A_x \} _{x\in X}$, $A_x\subseteq X\times \bN$ for every $x\in X$, and a constant $S>0$ such that 
\begin{enumerate}
\item
$\frac{|A_x \triangle A_y |}{|A_x\cap A_y|}\leqslant \epsilon$ when $d(x,y)\leqslant R$, and
\item
$A_x\subseteq B(x,S)\times \bN$.
\end{enumerate}
A finitely generated group has property A if it is coarsely amenable for the word metric with respect to some
finite generating set.

Property A may be seen as a weak (non-equivariant) version of amenability, and similarly to the latter notion it has many equivalent formulations and 
a large number of significant applications; see e.g.\ \cites{Willett-notes,NowakYu}. 
For countable discrete groups, Property A is equivalent to: the existence of a topological amenable action on a compact Hausdorff space \cite{HigRoe}, to the exactness of the reduced $C^{\ast}$--algebra \cites{GK,Oz}, to nuclearity of the uniform Roe algebra \cite{Roe},  
and to few other geometric and analytic properties; see e.g.\ \cite[pp.\ 81--82]{NowakYu}.  

Property A implies coarse embeddability into a Hilbert space \cite{Yu}. Analogously, amenability implies the Haagerup property (that is, a-T-me\-na\-bility in the sense of Gromov). The following diagram depicts relations (arrows denoting implications) between those properties for groups; see e.g.\ \cite[p.\ 124]{NowakYu}. Observe that the notions on the right may be seen as non-equivariant counterparts of the ones on the left.

$$  \begindc{\commdiag}[20]
\obj(1,1)[a]{Haagerup property}
\obj(77,1)[b]{coarse embeddability into $l_2$}
\obj(1,28)[c]{amenability}
\obj(77,28)[d]{property A}
\mor{a}{b}{}
\mor{c}{d}{}
\mor{c}{a}{}
\mor{d}{b}{}
\enddc  $$

\medskip
In view of the above a natural question, which was open till now, arose: \emph{Do groups coarsely embeddable into a Hilbert space have property A?} -- see e.g.\ 
\cite[Remark 3.8(2)]{claire}, \cite[Problem 3.4]{GH}, \cite[p.\ 257 \& 261]{GK2}, \cite[p.\ 6]{NiSaW}, \cite[footnote p.\ 27]{AD}, \cite[p.\ 251]{Willett-notes}, or \cite[Open Question 5.3.3]{NowakYu}. 
Approaches to answer this question (also in the positive) attracted much research in the area and triggered many new ideas.
Following a program towards a negative answer initiated in \cite{AO}, we prove a stronger statement.
\begin{mtheorem}[see Theorem~\ref{t:main} in the text]
There exist finitely generated groups acting properly on CAT(0) cubical complexes and not having property A.
\end{mtheorem}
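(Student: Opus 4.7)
The plan is to combine Theorem~1 with the graphical small cancellation machinery and an adaptation of Wise's wallspace construction. First, I would select the input sequence $(\tn)$ of finite graphs so that, viewed as a coarse disjoint union, it (a) fails property A while remaining coarsely embeddable into a Hilbert space, and (b) satisfies the bounded degree, growing girth, and diameter-versus-girth bounds required by Theorem~1. Such sequences are the ones constructed in the earlier work \cite{AO}, obtained as box-space-type graphs; they are not honest expanders (which would obstruct Hilbert space embeddability) but still have enough ``thickness'' to defeat coarse amenability.

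Next, I would apply Theorem~1 with a parameter $\lambda$ sufficiently small (say $\lambda \le 1/24$) to produce a $C'(\lambda)$ labelling of $(\tn)$ over a finite alphabet $S$, and define $G = \langle S \mid R \rangle$, where $R$ is the set of words read along essential cycles of the labelled $\tn$. Standard graphical small cancellation theory then gives two key outputs: $G$ is finitely generated and infinite, and each $\tn$ embeds \emph{isometrically} into $\cay(G,S)$. This isometric embedding is precisely the improvement over Gromov's generic labelling that Theorem~1 supplies, and it is the reason a genuine $C'(\lambda)$ condition (rather than a weaker ``weak embedding'' property) is crucial.

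Failure of property A would then follow by transfer: property A passes to coarsely embedded subspaces, so the isometric copies of $\tn$ inside $\cay(G,S)$ already obstruct property A for $G$. For the proper action on a CAT(0) cube complex, I would put a wallspace structure on the Cayley $2$-complex by dualising relators to ``hypergraphs'', as in Wise's treatment of classical $C'(1/6)$ presentations. The small cancellation condition guarantees that these hypergraphs are embedded and two-sided, and that only finitely many walls separate any two vertices; Sageev's dualisation then yields a CAT(0) cube complex on which $G$ acts properly.

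The main obstacle, I expect, is not the formal chain of implications but the packaging step: one must verify that Wise's hypergraph/cubulation argument, designed for classical $C'(1/6)$ presentations by finite words, goes through when the relators come from the (non-simply connected) graphs $\tn$. Concretely, one needs to check that hypergraphs remain embedded and two-sided in the graphical setting, and that the wall pseudo-metric remains proper even though $R$ is infinite and contains arbitrarily long relators. Once this graphical analogue of Wise's theorem is established, combining it with the isometric embedding of the non-A sequence $(\tn)$ from \cite{AO} closes the argument.
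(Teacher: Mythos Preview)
Your strategy has the right shape but misses the key construction that makes it work. You propose to put walls on the Cayley complex ``by dualising relators to hypergraphs, as in Wise's treatment of classical $C'(1/6)$ presentations.'' In the classical case relators are cycles, and walls pair opposite edges; in the graphical setting the relators are the graphs $\Theta_n$ themselves, which carry no canonical wall structure, so there is nothing to dualise. The paper resolves this by modifying the relators: after labelling $(\Theta_n)$ via Theorem~1, it passes to large-girth covers $(\wt\Theta_n,\wt m_n)$ and then to their $\mathbb Z_2$--homology covers $(\widehat\Theta_n,\widehat m_n)$, which \emph{do} carry canonical walls (preimages of single edges downstairs), following Wise and \cite{AGS}. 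The group is defined with the $\widehat\Theta_n$, not the original $\Theta_n$, as relators, and only then can walls be propagated from relators to the whole complex.

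This also shows why your input selection is off. You want $(\Theta_n)$ to be a ready-made non-A, $\ell_2$--embeddable sequence, but coarse embeddability of the relator graphs does not by itself hand the group a proper action on a CAT(0) cube complex --- that comes entirely from the engineered wall structure above. The paper instead starts from \emph{any} $D$--regular sequence ($D\geqslant 3$) with growing girth and the diameter bound; failure of property~A for $G$ then follows from Willett's theorem \cite{Willett} applied to the isometrically embedded $\widehat\Theta_n$, which remain $D$--regular with girth tending to infinity. The properness obstacle you correctly flag is handled by a new quantitative criterion, the \emph{proper lacunary walling condition} (Section~\ref{s:wlac}); the intermediate passage to large-girth covers is precisely what makes the lacunarity inequality in that condition hold.
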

Acting properly on a CAT(0) cubical complex is equivalent to acting properly on a space with walls \cites{HP,Nica,ChaNib}, that is to having property PW (in a language of \cite{Cor}). This implies in particular the Haagerup property, and hence equivariant coarse embeddability into a Hilbert space.
Theorem 2 shows that the diagram above is complete -- there are no other implications between the properties there; see \cite[p.\ 124]{NowakYu}.
Besides the Gromov monsters \cite{Gro}, the groups constructed in the current paper (see also Subsection~\ref{s:inGr} below) are the only finitely generated groups without property A known at the moment; see e.g.\ \cite{Nowak}, \cite[p.\ 6]{NiSaW}, \cite[p.\ 28]{AD}, \cite[p.\ 251 and Section 7.5]{Willett-notes}, or \cite[Open Question 4.5.4]{NowakYu} for related remarks and questions. 
 Note that coarsely non-amenable spaces embeddable into $l_2$ were constructed in \cite{Nowak} (locally finite case) and in \cite{AGS} (bounded geometry case).  Our construction relies on examples constructed in \cite{Ost}.

Let us remark here that the lack of property A for a group was believed to be an essential obstacle to various Baum-Connes conjectures by some experts. This question is clarified by Theorem 2: There are groups without property A but satisfying the Haagerup property. For such groups the strong Baum-Connes conjecture holds \cite{HK}.

Coarsely non-amenable groups embeddable into a Hilbert space construc\-ted in this article are given by infinite graphical small cancellation presentations (see Section~\ref{s:group} for details). The infinite family of graphs being relators consists of some coverings
of regular graphs with girths growing to infinity. Relators are graphs with walls (see Section~\ref{s:wall}), and thus there is a walling for the 
group itself (see the proof of Theorem~\ref{t:main}). Therefore, the
group acts on a space with walls. This action is proper if some additional conditions are satisfied. We study such a condition -- the proper lacunary walling condition -- in Section~\ref{s:wlac}. This is a theory of independent interest that relies on, and extends in a way, the preceding work of the author with Goulnara Arzhantseva \cite{AO} (cf.\ also \cite{AO0}). In particular, we obtain the following analogue of
\cite[Main Theorem and Theorem 1.1]{AO}.
\begin{T3}[see Theorem~\ref{p:lsp} in the text]
Let $X$ be a complex satisfying the proper lacunary walling condition. Then the wall pseudo-metric is proper. Consequently, a group acting
properly on $X$ acts properly on a CAT(0) cubical complex.
\end{T3}
A group as in Theorem 2 is constructed so that the proper lacunary walling condition is satisfied for a space acted properly upon by the group. 
Therefore the group acts properly on a CAT(0) cubical complex. On the other hand, by the small cancellation condition, the infinite family of relators embeds isometrically into the Cayley graph. Since, by a result of Willett~\cite{Willett}, such a family has not property A, we conclude that the whole group is coarsely non-amenable.

\subsection{Groups with expanders in Cayley graphs}
\label{s:inGr}
Using his labelling of expanders Gromov constructed a finitely generated group, for which there exists  a weak embedding in the sense of \cite[Definition 7.2]{Ost2013}
of an expander \cite{Gro}. A weak embedding is not necessarily a coarse embedding and with Gromov's construction one cannot obtain
the latter; see the discussion in Subsection~\ref{s:discus}. Having weakly embedded expanders is enough to claim that the group does not coarsely embed into a Hilbert space \cite{Gro}, or that the Baum-Connes conjecture with coefficients fails for such groups \cite{HiLaSka} (cf.\ our Corollary~\ref{c:Hilbert} and Corollary~\ref{c:WiYu}). However, in many other situations
it seems to be necessary to have an actual coarse embedding of an expander to obtain desired properties; see e.g.\ \cites{WillettYu1}.
Our labelling allows us to provide groups with such a property and more, as the following result shows.

\begin{theorem3}[see Corollary~\ref{c:Hilbert} in the text]
There exist finitely generated groups with expanders isometrically embedded into their Cayley graphs.
\end{theorem3}

The existence of such examples is crucial for some analyses of failures of the Baum-Connes conjecture with coefficients, as in \cite[Theorem 8.3]{WillettYu1} (see Corollary~\ref{c:WiYu}) or in \cite[Section 7]{BGW}. Besides Gromov's monsters (and groups derived from them), our examples are the only finitely generated counterexamples to the Baum-Connes conjecture with coefficients, and the only finitely generated groups not coarsely
embeddable into Hilbert space, known at the moment.

As direct consequence of Theorem 4 and a result by Sapir~\cite{Sapir} we obtain that there exist closed aspherical manifolds whose fundamental groups contain coarsely embedded expanders; see Corollary~\ref{c:Sapir}. Those are the first examples of this type. 
%

Note that in some situations it may be necessary to have the actual isometric embedding of given graphs into groups -- this happens for example in our construction of PW non-A groups; see Subsection~\ref{s:inA} above and Section~\ref{s:pwna}. There we need it for the delicate construction of walls. We believe that it may be crucial
for further applications.
\medskip

\noindent
{\bf Acknowledgments.} First and foremost, I would like to thank Goulnara Arzhantseva for introducing me to the subject, leading through it, and for the great collaboration preceding this work.
I am grateful for encouragement, for helpful discussions, and/or for remarks improving the manuscript to Dominik Gruber, Vincent Guirardel, Fr\' ed\' eric Haglund, Ashot Minasyan, Piotr Nowak, Denis Osin, Mark Sapir, J\' an \v Spakula, and Rufus Willett. I thank the anonymous referee for a careful reading of the manuscript and
numerous important comments.

This research was partially supported by Narodowe Centrum Nauki, grants no.\ UMO-2012/06/A/ST1/00259,
UMO-2015/\-18/\-M/\-ST1/\-00050, and UMO-2017/\-25/\-B/\-ST1/\-01335, and by the ERC grant ANALYTIC no.\ 259527.

\section{Small cancellation labellings of some graphs}
\label{s:constr}

The goal of this section is proving Theorem 1 from Introduction or, more precisely, Theorem~\ref{l:c2LLL} below.
Considering a metric on a graph we always mean a metric on the set of vertices, being a path metric within connected components. 
\medskip

\emph{Throughout this paper we work with the sequence $\Theta=(\tn)_{n\in \bN}$ of disjoint finite connected graphs of degree bounded by $D>0$. Furthermore, we have $\gi \tn \stackrel{n\to \infty}{\longrightarrow} \infty$ and $\Theta$ satisfies the following condition:
\begin{align}
\label{e:dig}
\di \tn \leqslant A\, \gi \tn,
\end{align}
where $\di$ denotes the diameter, $\gi$ is the length of the shortest simple cycle, and $A$ is a universal (not depending on $n$) constant.
For this section we fix a \emph{small cancellation constant} $\lambda \in (0,1/6]$. We also assume that $1<\lfloor \lambda \gi \Theta_{n}\rfloor<\lfloor \lambda \gi \Theta_{n+1}\rfloor$.}

\medskip

Observe that for a sequence $(\tn)_{n\in \bN}$ with growing girths, the last assumption can be fulfilled by passing to a subsequence -- this is allowed from the point of view of our applications. 

By a \emph{labelling} $(\Gamma,f)$ of an undirected graph $\Gamma$ we mean a graph morphism $f\colon \Gamma \to W$ into a bouquet of finitely many loops $W$, that is a graph with one vertex end several edges. Usually we refer however to the following interpretation of 
the labelling $f$. Orient edges of $W$ and decorate every directed edge (loop) by an element of a finite set $S$. Then the labelling $f$ is determined by the following data: We orient every edge of $\Gamma$ and we assign to it the corresponding element of the set $S$ or \begin{figure}[h!]
\centering
\includegraphics[width=0.6\textwidth]{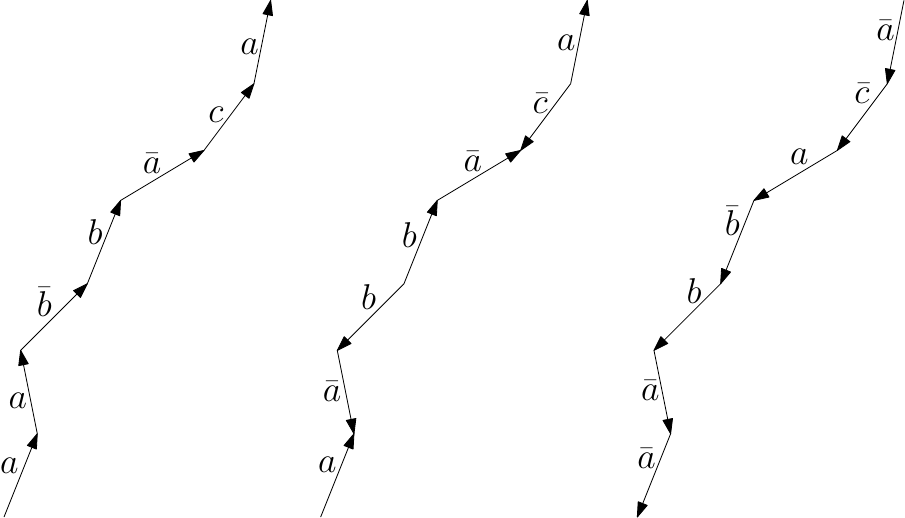}
\caption{Three representations of the same labelling.}
\label{f:label}
\end{figure}
an element of the set $\ov S$ of formal inverses of elements of $S$. We call the set $S\cup \ov S$ the \emph{(symmetrized) set of labels}, and by $\bar s$ we denote the \emph{inverse} of $s$. Using this interpretation we identify the labelling assigning the label $s$ to an oriented edge
$vw$ with the labelling of $wv$ by $\bar s$; see Figure~\ref{f:label}.
The labelling $(\Gamma,f)$ is \emph{reduced} if $f\colon \Gamma \to W$ is locally injective, that is, 
for every vertex and every two edges leaving the vertex their labels are different.
We will usually not specify the (symmetrized) set of labels (although it will change often) -- we will just mention that it is finite. 

We construct the small cancellation labelling $(\Theta,m)=((\tn,m_n))_{n\in \bN}$ in three steps. First, in Subsection~\ref{s:LLL}
we construct a labelling $(\Theta,l)=((\tn,l_n))_{n\in \bN}$ such that $l_n$--labellings of long (relative to $\gi \tn$) paths in $\tn$ do not appear in $(\Theta_{n'},l_{n'})$, for $n\neq n'$; see Lemma~\ref{l:sc1}.
Then, in Subsection~\ref{s:mLLL} we construct a labelling $(\Theta,l')=((\tn,l'_n))_{n\in \bN}$ with the property that, for each $n$, long paths in $\tn$ are labelled differently; see Lemma~\ref{l:scm}. Finally, in Subsection~\ref{s:fscl} we combine $(\Theta,l)$ and $(\Theta,l')$ to obtain the required small cancellation labelling $(\Theta,m)$; see Theorem~\ref{l:c2LLL}. 


\subsection{The labelling \texorpdfstring{$(\Theta,l)$}{(Theta,l)}: small cancellation between graphs}
\label{s:LLL}

Recall the following version of the Lov\'asz Local Lemma (see e.g.\ \cite{AloSpe}) that can be found in
\cite[Lemma 1]{AloGry}. Here $\mr{Pr}(A)$ denotes the (discrete) probability of an event $A$, and $\bar A$ denotes
the opposite event (complementary set).  

\begin{lemma}[Lov\'asz Local Lemma]
\label{l:LLL}
Let $\cA = \cA_1 \cup \cA_2 \cup \ldots \cup \cA_r$ be a
partition of a finite set of events $\cA$, with $\mr{Pr}(A) = p_i$ for every $A \in \cA_i$, $i = 1, 2,\ldots , r$.
Suppose that there are real numbers $0\leqslant a_1, a_2, \ldots, a_r < 1$ and $\Delta_{ij}\geqslant 0$, $i, j = 1, 2, \ldots, r$
such that the following conditions hold:
\begin{enumerate}
\item[\it (i)]
for any event $A \in \cA_i$ there exists a set $\mathcal D_A \subseteq \cA$ with $|\mathcal D_A \cap \cA_j|\leqslant \Delta_{ij}$ for all
$j = 1, 2,\ldots, r$, such that $A$ is independent of $\cA \setminus (\mathcal D_A \cup\{ A\})$,
\item[{\it (ii)}]
$p_i\leqslant a_i \prod_{j=1}^{r} (1-a_j)^{\Delta_{ij}}$ for all $i = 1, 2,\ldots, r$.
\end{enumerate}
Then $\mr{Pr}(\bigcap_{A\in \cA} \bar {A})>0$.
\end{lemma}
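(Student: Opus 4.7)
The plan is to prove the standard inductive statement underlying the Lov\'asz Local Lemma, adapted to the partitioned form given here, and then deduce the final bound by a telescoping product. The key auxiliary claim I would establish is the following: for every event $A\in \cA_i$ and every subset $\mathcal S\subseteq \cA\setminus\{A\}$,
\[
\mr{Pr}\Bigl(A\ \Bigm|\ \bigcap_{B\in\mathcal S}\bar B\Bigr)\ \leqslant\ a_i.
\]
Once this is in hand, listing the events of $\cA$ as $A^{(1)},A^{(2)},\ldots,A^{(N)}$ in any order and writing
\[
\mr{Pr}\Bigl(\bigcap_{k=1}^{N}\bar{A^{(k)}}\Bigr)\ =\ \prod_{k=1}^{N}\Bigl(1-\mr{Pr}\bigl(A^{(k)}\mid \bar A^{(1)}\cap\cdots\cap \bar A^{(k-1)}\bigr)\Bigr)\ \geqslant\ \prod_{k=1}^{N}(1-a_{i(k)})\ >\ 0,
\]
where $i(k)$ is the index of the class containing $A^{(k)}$, immediately gives the conclusion.

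The auxiliary claim I would prove by induction on $|\mathcal S|$. In the base case $\mathcal S=\emptyset$, hypothesis (ii) gives $\mr{Pr}(A)=p_i\leqslant a_i\prod_j(1-a_j)^{\Delta_{ij}}\leqslant a_i$, since each factor is at most $1$. For the inductive step I would split $\mathcal S$ into $\mathcal S_1=\mathcal S\cap \mathcal D_A$ and $\mathcal S_2=\mathcal S\setminus \mathcal D_A$, and then write
\[
\mr{Pr}\Bigl(A\ \Bigm|\ \bigcap_{B\in\mathcal S}\bar B\Bigr)\ =\ \frac{\mr{Pr}\bigl(A\cap \bigcap_{B\in \mathcal S_1}\bar B\ \bigm|\ \bigcap_{B\in \mathcal S_2}\bar B\bigr)}{\mr{Pr}\bigl(\bigcap_{B\in \mathcal S_1}\bar B\ \bigm|\ \bigcap_{B\in \mathcal S_2}\bar B\bigr)}.
\]
For the numerator, drop the intersection with $\bigcap_{B\in\mathcal S_1}\bar B$ and use that $A$ is independent of the events in $\mathcal S_2\subseteq \cA\setminus(\mathcal D_A\cup\{A\})$, giving an upper bound of $\mr{Pr}(A)=p_i$. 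For the denominator, write $\mathcal S_1=\{B_1,\ldots,B_t\}$ and expand
\[
\mr{Pr}\Bigl(\bigcap_{k=1}^{t}\bar B_k\ \Bigm|\ \bigcap_{B\in \mathcal S_2}\bar B\Bigr)\ =\ \prod_{k=1}^{t}\Bigl(1-\mr{Pr}\bigl(B_k\ \bigm|\ \bar B_1\cap\cdots\cap\bar B_{k-1}\cap \bigcap_{B\in \mathcal S_2}\bar B\bigr)\Bigr);
\]
each conditional probability on the right is $\leqslant a_{j}$ (for the class $\cA_j$ containing $B_k$) by the inductive hypothesis, so the denominator is at least $\prod_j (1-a_j)^{|\mathcal S_1\cap\cA_j|}\geqslant \prod_j(1-a_j)^{\Delta_{ij}}$, using $|\mathcal S_1\cap\cA_j|\leqslant|\mathcal D_A\cap\cA_j|\leqslant\Delta_{ij}$. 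Hypothesis (ii) then finishes the inductive step.

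I do not expect a genuine obstacle here; the argument is the textbook inductive proof of the Lov\'asz Local Lemma, and the partitioned hypotheses (ii) are tailored exactly so that the bookkeeping by class index $j$ matches the product appearing in the denominator. The only point requiring mild care is correctly distinguishing events in $\mathcal D_A$ from those outside, so that independence is invoked only for $\mathcal S_2$, and so that the denominator product is bounded class by class rather than event by event; once this is done, the proof writes itself.
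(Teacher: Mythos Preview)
Your proof is correct and is the standard inductive argument for the Lov\'asz Local Lemma. Note, however, that the paper does not supply its own proof of this lemma: it is stated as a recalled result with references to \cite{AloSpe} and \cite[Lemma~1]{AloGry}, so there is no in-paper proof to compare against. One minor point worth making explicit in your write-up is that the conditional probabilities are well defined, i.e.\ that $\mr{Pr}\bigl(\bigcap_{B\in\mathcal S}\bar B\bigr)>0$ for every $\mathcal S\subseteq\cA$; this follows from the same induction (via the telescoping product you use at the end), but it is cleanest to state it as part of the inductive hypothesis.
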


Let $\gamma_n=\lfloor \lambda \, \gi \txT_n \rfloor$. Observe that $\lambda \, \gi \txT_n -1 < \gamma_n$ and thus
\begin{align}
\label{e:LLL8}
\frac{\gi \txT_n}{\gamma_n}<\frac{1}{\lambda}+\frac{1}{\lambda\, \gamma_n}<\frac{2}{\lambda}.
\end{align}
We will find a labelling $(\txT, \tl)=((\txT_n,\tl_n))_{n\in \bN}$ with $L$ labels such that 
$\tl_n$--labellings of paths of length at least $\gamma_n$ do not appear as $\tl_{n'}$--labellings, for $n'>n$.
Unless stated otherwise, we always assume that paths are without backtracking. It implies that all paths shorter than the girth are simple.
Define $L$ as follows (here $e$ denotes the Euler constant):
\begin{align}
\label{f:LLL9}
L:={\left \lceil {2De^4D^{\frac{2A}{\lambda}+1}}\right \rceil}.
\end{align}
The number $e_n$ of edges of $\Theta_n$ is bounded by $e_n\leqslant D^{\mr{diam}\, \Theta_n}$. Thus, by the condition
(\ref{e:dig}), we have 
\begin{align}
\label{e:LLL9a}
e_n\leqslant D^{A\gi \Theta_n}.
\end{align}

We construct $((\txT_n, \tl_n))_{n\in \bN}$ inductively: $(\txT_1,\tl_1)$ is an arbitrary labelling with $L$ labels, and further we execute an inductive step.
Assume that $(\txT_1,\tl_1),\ldots,(\txT_{n-1},\tl_{n-1})$ are defined. 
Let $M_i$ denote the number of words appearing as labels of paths of length $\gamma_i$ in $(\Theta_i,l_i)$.
Let $N_i$ denote the number of possibilities of labelling a fixed simple path of length $\gamma_i$ by $L$ letters.
Observe that, for $i=1,\ldots,n-1$, we have 
\begin{align}
\label{e:LLL11}
M_i< e_iD^{\gamma_i},
\end{align}
and
\begin{align}
\label{e:LLL12}
N_i= L^{\gamma_i}.
\end{align}

The labelling $(\txT_n,\tl_n)$ is then one given by the following lemma.
\begin{lemma}
\label{l:scLLL}
There exists a labelling $(\txT_n,\tl_n)$ with $L$ labels such that, for $i=1,2,\ldots,n-1$, no $\tl_i$--labelling of a path of length
$\gamma_i$ in $\txT_i$ appears as an $\tl_n$--labelling of a path of length
$\gamma_i$ in $\txT_n$.
\end{lemma}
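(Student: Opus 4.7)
The plan is to apply the Lovász Local Lemma (Lemma~\ref{l:LLL}) to a uniformly random $L$-labeling of $\txT_n$. I orient each edge of $\txT_n$ arbitrarily and, independently for each edge, draw a label uniformly from the $L$-element label set, producing a random $\tl_n$. For each $i\in\{1,\dots,n-1\}$ and each (non-backtracking) path $p$ in $\txT_n$ of length $\gamma_i$, let $B_{p,i}$ be the event that the $\tl_n$-label sequence of $p$ coincides with the $\tl_i$-label sequence of some length-$\gamma_i$ path in $\txT_i$. If I can show $\mr{Pr}\bigl(\bigcap_{p,i}\overline B_{p,i}\bigr)>0$, any outcome in this event provides a labeling of $\txT_n$ as required.

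For the probability estimate, $\gamma_i<\gi\txT_i\leq\gi\txT_n$ implies that $p$ is simple, so the random labels on its $\gamma_i$ edges are independent. Combining \eqref{e:LLL11}--\eqref{e:LLL12} with the bound $e_i\leq D^{A\gi\txT_i}$ from \eqref{e:LLL9a}, the inequality $\gi\txT_i<2\gamma_i/\lambda$ from \eqref{e:LLL8}, and the choice \eqref{f:LLL9} of $L$, I get
$$\mr{Pr}(B_{p,i})\leq\frac{e_iD^{\gamma_i}}{L^{\gamma_i}}\leq\Bigl(\frac{D^{2A/\lambda+1}}{L}\Bigr)^{\gamma_i}\leq(2De^4)^{-\gamma_i}=:p_i.$$
The event $B_{p,i}$ depends only on the random labels of the $\gamma_i$ edges of $p$, so it is independent of every $B_{p',j}$ with $p'$ edge-disjoint from $p$. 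Grouping events into classes $\cA_i$ by the index $i$, and counting length-$\gamma_j$ paths through a given edge by position and orientation (at most $2\gamma_j D^{\gamma_j}$), I take $\Delta_{ij}:=2\gamma_i\gamma_j D^{\gamma_j}$, which dominates $|\mathcal D_{B_{p,i}}\cap\cA_j|$.

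Set $a_i:=(4D)^{-\gamma_i}\in(0,1/4]$. After, if necessary, thinning the sequence $\Theta$ so that the integers $\gamma_j$ are pairwise distinct --- possible because $\gi\txT_j<\gi\txT_{j+1}$ --- I estimate
$$\sum_{j}a_j\Delta_{ij}=2\gamma_i\sum_{j}\gamma_j\,4^{-\gamma_j}\leq 2\gamma_i\sum_{k\geq 1}k\cdot 4^{-k}=\tfrac{8}{9}\gamma_i,$$
and using $1-a_j\geq e^{-2a_j}$ (valid for $a_j\leq 1/2$) I obtain
$$a_i\prod_{j}(1-a_j)^{\Delta_{ij}}\geq(4D)^{-\gamma_i}\exp\!\bigl(-\tfrac{16}{9}\gamma_i\bigr)=\bigl(4De^{16/9}\bigr)^{-\gamma_i}\geq p_i,$$
the last inequality because $e^{4-16/9}=e^{20/9}>2$. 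Lemma~\ref{l:LLL} then yields $\mr{Pr}\bigl(\bigcap_{p,i}\overline B_{p,i}\bigr)>0$, producing the required labeling.

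The main obstacle is ensuring that the weighted dependency sum $\sum_j\gamma_j(\alpha D)^{\gamma_j}$ stays bounded uniformly in $n$: the factor $e^4$ in the definition \eqref{f:LLL9} of $L$ is precisely the slack reserved to absorb this sum, and the mild subsequence thinning is what forces the $\gamma_j$ to grow fast enough for the geometric series to converge.
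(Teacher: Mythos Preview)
Your proof is correct and follows essentially the same Lov\'asz Local Lemma argument as the paper, differing only in the choice of constants (you take $a_i=(4D)^{-\gamma_i}$ and $\Delta_{ij}=2\gamma_i\gamma_j D^{\gamma_j}$, the paper takes $(2D)^{-\gamma_i}$ and $\gamma_i\gamma_j D^{\gamma_j}$). Your explicit thinning to make the $\gamma_j$ pairwise distinct is a point the paper leaves implicit but equally needs for its estimate $\sum_j \gamma_j 2^{-\gamma_j}\leq\sum_{k\geq 1} k\,2^{-k}$; note, though, that strict monotonicity of $\gi\Theta_j$ alone does not force the $\gamma_j$ to be distinct---what you actually use is $\gi\Theta_j\to\infty$, which does allow such a subsequence, consistent with the remark at the start of Section~\ref{s:constr}.
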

\begin{proof}
We use the Lov\'asz Local Lemma \ref{l:LLL} following closely the proof of \cite[Theorem 1]{AloGry}. 
Randomly label the edges of $\txT_n$ by $L$ labels. 
For a path $p$ in $\txT_n$ of length $\gamma_i$, let $A(p)$ denote the event that its $\tl_n$--labelling is the same as an $\tl_i$--labelling of some path in $\txT_i$ of length $\gamma_i$, for $i<n$.
Set $\cA_i=\{ A(p) :  p \; \mr{is\; a\; path\; of\; length}\; \gamma_i \;\mr{in} \; \txT_n \}$. Recall (see Lemma \ref{l:LLL}) that $p_i$ denotes the probability $\mr{Pr}(A)$ for every $A\in \cA_i$. Then, by (\ref{e:LLL11}), (\ref{e:LLL12}), (\ref{e:LLL9a}), and (\ref{e:LLL8}), we have
\begin{align}
\label{e:LLL15}
p_i\leqslant \frac{e_iD^{\gamma_i}}{L^{\gamma_i}}\leqslant \frac{D^{A\gi \Theta_i+\gamma_i}}{L^{\gamma_i}}=
\left(\frac{D^{\frac{A\gi \Theta_i}{\gamma_i}+1}}{L}\right)^{\gamma_i}<
\left(\frac{D^{\frac{2A}{\lambda}+1}}{L}\right)^{\gamma_i}.
\end{align}
 Each path of length $\gamma_i$ shares an edge with not more than $\gamma_i\gamma_jD^{\gamma_j}$ paths of length $\gamma_j$, so that we may take 
$\Delta_{ij}=\gamma_i\gamma_jD^{\gamma_j}$.
Let $a_i=a^{-\gamma_i}$, where $a=2D$. 
Then, by using subsequently: formulas (\ref{e:LLL15}) and (\ref{f:LLL9}), the definition of $a_i$, the fact that $\sum_{j=1}^{\infty} j/2^j =2$, the definitions of $a$, $\Delta_{ij}$, and $a_j$, we obtain:

\begin{align*}
\begin{split}
p_i&< \left(\frac{D^{\frac{2A}{\lambda}+1}}{L}\right)^{\gamma_i}\leqslant 2^{-\gamma_i}D^{-\gamma_i}e^{-4\gamma_i}
= a_i \exp{\left(-2\sum_{j=1}^{\infty}\gamma_i\, \frac{j}{2^j} \right)}
\\ & \leqslant 
a_i \exp{\left(-2\sum_{j}\gamma_i\, \frac{\gamma_j}{2^{\gamma_j}} \right)}
 =a_i \exp{\left(-2\sum_{j}\gamma_i\gamma_j\left(\frac{D}{a}\right)^{\gamma_j} \right)}
\\ & =a_i \exp{\left(-2\sum_{j} \Delta_{ij}a_j \right)}
= a_i\prod_j e^{-2a_j\Delta_{ij}}.
\end{split}
\end{align*}
Since, by $a_j\leqslant 1/2$, we have $e^{-2a_j}\leqslant (1-a_j)$ (because for the function $f\colon \mathbb R \to \mathbb R\colon x \mapsto e^{-2x}$ we have $f(0)=1-0$, $f(\frac{1}{2})<1-\frac{1}{2}$, and $f'$ is increasing), we obtain finally
\begin{align*}
p_i\leqslant a_i\prod_j (1-a_j)^{\Delta_{ij}}.
\end{align*}
Therefore the hypotheses of the Lov\'asz Local Lemma are fulfilled, and we conclude that there exists a labelling $\tl_n$ as required.
\end{proof}

The labelling $(\Theta,l)=((\tn,l_n))_{n\in \bN}$ with $L$ labels obtained by the inductive construction has the following property.

\begin{lemma}
\label{l:sc1}
For each $n\in \bN$, no $l_n$--labelling of a path of length at least $\lambda \, \gi \tn$ is a labelling of a path in $(\Theta_{n'},l_{n'})$, with
$n'\neq n$.
\end{lemma}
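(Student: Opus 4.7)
The plan is to reduce the statement to a direct application of Lemma~\ref{l:scLLL}, by passing to subpaths of the appropriate length. Suppose for contradiction that there is a path $p$ in $\tn$ of length $\ell \geqslant \lambda \gi \tn$ whose $l_n$-labeling coincides with the $l_{n'}$-labeling of some path $p'$ in $\Theta_{n'}$, for some $n' \neq n$. I will split into the two cases $n' > n$ and $n' < n$, and in each case carve out a subpath whose length hits one of the $\gamma_i$ exactly.

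First, suppose $n' > n$. Since $\gamma_n = \lfloor \lambda \gi \tn \rfloor \leqslant \lambda \gi \tn \leqslant \ell$, I can take an initial subpath $q \subseteq p$ of length exactly $\gamma_n$ (still without backtracking, since a subpath of a non-backtracking path is non-backtracking). The $l_{n'}$-labeling of the corresponding initial subpath $q' \subseteq p'$ is then equal to the $l_n$-labeling of $q$, which is an $l_n$-labeling of a path of length $\gamma_n$ in $\tn$. This directly contradicts the conclusion of Lemma~\ref{l:scLLL} applied at the step when $(\Theta_{n'}, l_{n'})$ was defined (with the role of the index $i$ there played by $n$).

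Next, suppose $n' < n$. By the standing assumption $\gi \Theta_{n'} < \gi \tn$, so
\begin{align*}
\gamma_{n'} = \lfloor \lambda \gi \Theta_{n'} \rfloor \leqslant \lambda \gi \Theta_{n'} < \lambda \gi \tn \leqslant \ell,
\end{align*}
and again I can select a subpath $q \subseteq p$ of length exactly $\gamma_{n'}$, with corresponding subpath $q' \subseteq p'$. Now the $l_{n'}$-labeling of $q'$ is a labeling of a path of length $\gamma_{n'}$ in $\Theta_{n'}$, and it appears as the $l_n$-labeling of $q$ inside $\tn$. This contradicts Lemma~\ref{l:scLLL} applied at the step when $(\tn, l_n)$ was defined (with the role of the index $i$ there played by $n'$).

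The only thing that requires any care is this bookkeeping of which step of the inductive construction produces the contradiction in each case; both cases are then immediate from Lemma~\ref{l:scLLL}. I do not anticipate a real obstacle — the standing assumptions $\lambda \gi \tn > 1$ (so that $\gamma_n \geqslant 1$) and $\gi \tn < \gi \Theta_{n+1}$ are exactly what make the two subpath-length inequalities above work.
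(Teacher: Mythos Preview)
Your argument is correct and matches the paper's approach: the paper states Lemma~\ref{l:sc1} without proof, treating it as an immediate consequence of the inductive construction via Lemma~\ref{l:scLLL}, and your two-case reduction to subpaths of length $\gamma_n$ (when $n'>n$) or $\gamma_{n'}$ (when $n'<n$) is exactly the intended unpacking of that claim.
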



\subsection{The labelling \texorpdfstring{$(\Theta,l')$}{(Theta, l')}: small cancellation within \texorpdfstring{$\tn$}{Theta n}}
\label{s:mLLL}
For this subsection we fix $n$ -- we will work only with $\tn$. Again, unless stated otherwise, we always assume that paths are without backtracking, in particular all paths shorter than the girth are simple.
First we show that if two distinct relatively long paths in $\tn$ have the same  
labelling then a path with a specific labelling appears; see Lemma~\ref{l:mLLL}. Then we use this observation to find a required labelling $(\tn,l'_n)$, by an application of the Lov\' asz Local Lemma, similarly as in the proof of Lemma~\ref{l:scLLL}.
\medskip 

Let $\widetilde v=(v_0,v_1,\ldots,v_k)$, $\widetilde w=(w_0,w_1,\ldots,w_k)$ be two paths with the same labelling and with $k=\lfloor \lambda \, \gi \tn \rfloor$
(here $v_i,w_i$ are consecutive vertices). 
Denote the labelling of the directed edge $v_{i-1}v_{i}$ by $a_i$, for $i=1,2,\ldots,k$.
We consider separately the cases when $\widetilde v$ and $\widetilde w$ share an edge, and when they do not.
\medskip

\noindent
{\bf{Case I: $\widetilde v$ and $\widetilde w$ do not share an edge.}} Then there exists a path $\widetilde u = (u_0:=v_s,u_1,\ldots,u_r:=w_t)$ of minimal length
connecting $\widetilde v$ and $\widetilde w$. Possibly $r=0$, that is, $\widetilde u$ is one vertex $u_0:=v_s=w_t$. Without loss of generality (subject to renaming) we may assume that $s\geqslant t \geqslant k/2$ (if $s<t$ we may exchange $\widetilde v$ with $\widetilde w$, if $t<k/2$ then we exchange $w_i$ with $w_{k-i}$ -- this corresponds to difference in labellings in Cases Ia and Ib below); see Figure~\ref{f:I}. By our assumptions we have $r\leqslant \di \tn \leqslant A\, \gi \tn$. 
We consider the following two cases separately.
\begin{figure}[h!]
\centering
\includegraphics[width=0.6\textwidth]{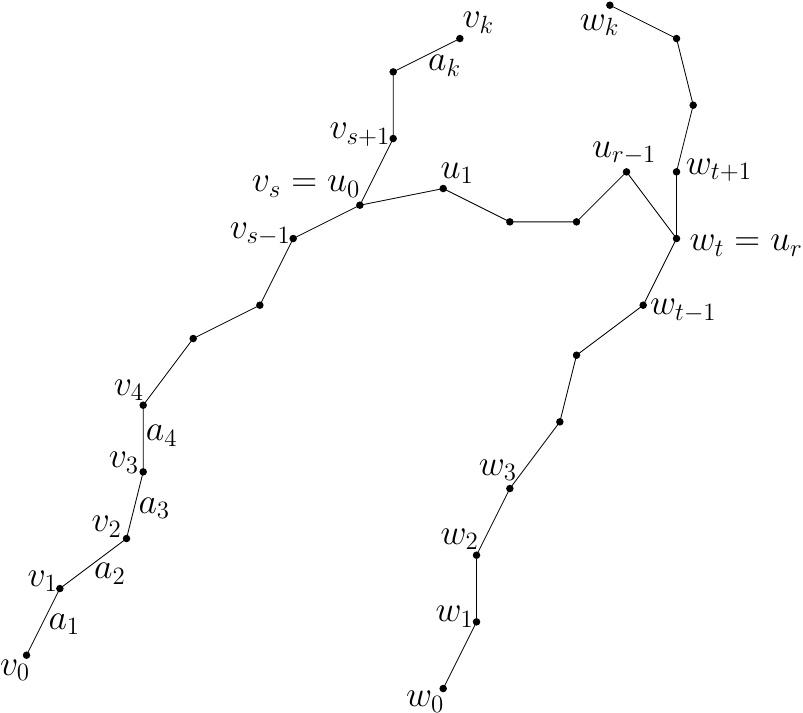}
\caption{Case I}
\label{f:I}
\end{figure}

\medskip

\noindent
\emph{(Case Ia): The labelling of a directed edge $w_{i-1}w_i$ is $a_i$} (see Figure~\ref{f:Ia} on the left).
Then we have the path $p:=(v_0,\ldots,v_s,u_1,\ldots,u_{r-1},w_t,\ldots,w_0)$.
By (\ref{e:dig}), its length $|p|$ may be bounded from above by
\begin{align}
\label{e:m1}
2k+r\leqslant 2\lambda \, \gi \tn +A\, \gi \tn=(2\lambda+A)\gi \tn.
\end{align}
In its labelling the beginning sub-path of length $t$ is labelled the same way -- up to changing orientation -- as the ending
sub-path of length $t$, that is, it has the form (where `repetitive' parts are underlined):
\begin{align}
\label{e:m2}
(\underline{a_1,a_2,\ldots,a_t},\ldots,\underline{\bar a_t,\ldots,\bar a_2,\bar a_1}),
\end{align}
with 
\begin{align}
\label{e:m3}
t\geqslant k/2>\frac{\lambda \gi \tn}{4}.
\end{align}
(The last inequality is a rough estimate coming from $k> \lambda \gi \tn -1$.)

\begin{figure}[h!]
\centering
\includegraphics[width=0.9\textwidth]{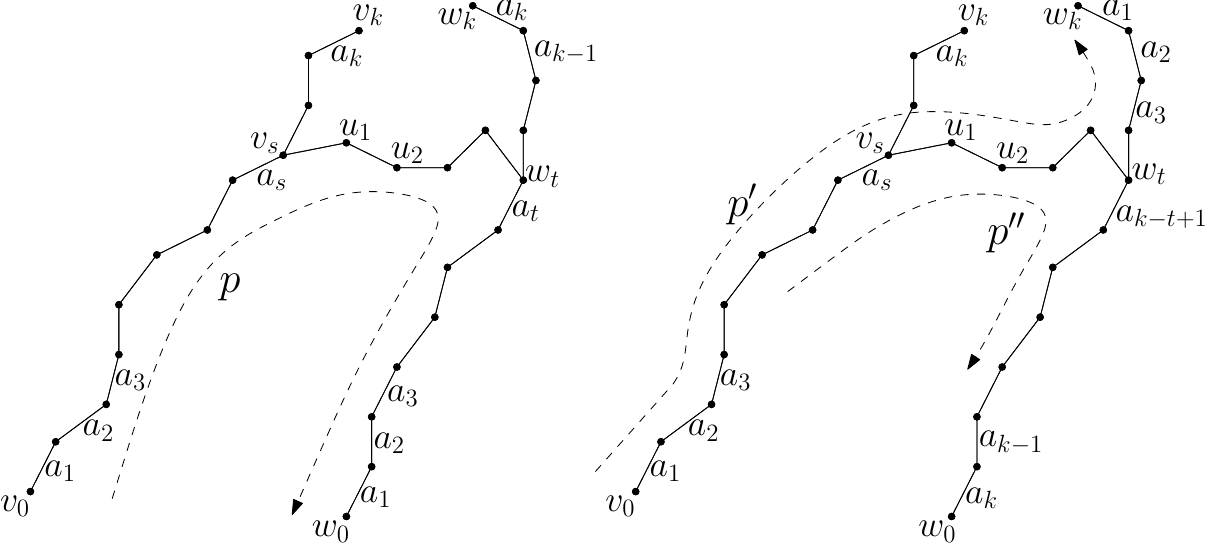}
\caption{Case Ia (left) and Case Ib (right)}
\label{f:Ia}
\end{figure}

\medskip

\noindent
\emph{(Case Ib): The labelling of a directed edge $w_{i+1}w_{i}$ is $a_{k-i}$} (see Figure~\ref{f:Ia} on the right).
In this case again we consider separately two subcases:
\medskip

(i) When $t\leqslant 3k/4$ then we consider the path $p':=(v_0,\ldots,v_s,u_1, \ldots,u_{r-1},\\ w_t,\ldots,w_k)$.
Its length may be again bounded from above by (\ref{e:m1}), and its labelling is of the form similar to (\ref{e:m2}):
\begin{align*}
(\underline{a_1,a_2,\ldots,a_{k-t}},\ldots,\underline{\bar a_{k-t},\ldots,\bar a_2,\bar a_1}),
\end{align*}
with 
\begin{align}
\label{e:m5}
k-t\geqslant k-3k/4=k/4>\frac{\lambda \gi \tn}{8}.
\end{align}
\medskip

(ii) When $t> 3k/4$ then we consider the path $p'':=(v_{k-t},\ldots,v_s,u_1, \ldots,\\ u_{r-1},w_t,\ldots,w_{k-s})$.
We bound its length from above by (\ref{e:m1}), and its labelling is of the form:
\begin{align*}
(\underline{a_{k-t+1},a_{k-t+2},\ldots,a_s},\ldots,\underline{a_{k-t+1},a_{k-t+2},\ldots,a_s}),
\end{align*}
with the lengths of the `repetitive' pieces at least: 
\begin{align}
\label{e:m7}
s-(k-t+1)+1=s+t-k> \frac{k}{2}+\frac{3k}{4}-k=k/4>\frac{\lambda \gi \tn}{8}.
\end{align}

\medskip

\noindent
{\bf{Case II: $\widetilde v$ shares an edge with $\widetilde w$.}}
Then there are $r\geqslant 1$, and $s,t$, such that $v_{s+i}=w_{t+i}$, for $i=1,2,\ldots,r$, and $v_i\neq w_j$ in other cases (because the paths are much shorter than the girth). 
Similarly as in Case I, without loss of generality (subject to renaming) we may assume that $s\geqslant t$; see Figure~\ref{f:II}. 
We consider the following two cases separately.
\begin{figure}[h!]
\centering
\includegraphics[width=0.5\textwidth]{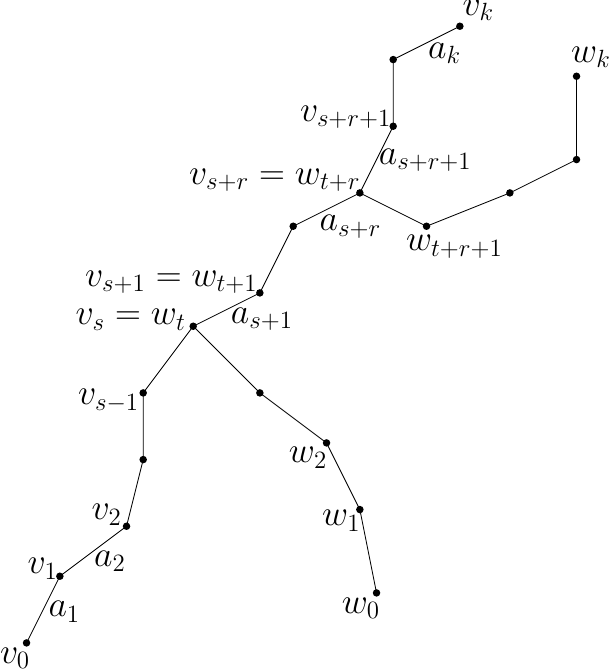}
\caption{Case II}
\label{f:II}
\end{figure}
\medskip

\noindent
\emph{(Case IIa): The labelling of a directed edge $w_{i-1}w_i$ is $a_i$} (see Figure~\ref{f:IIb} on the left).
\begin{figure}[h!]
\centering
\includegraphics[width=0.8\textwidth]{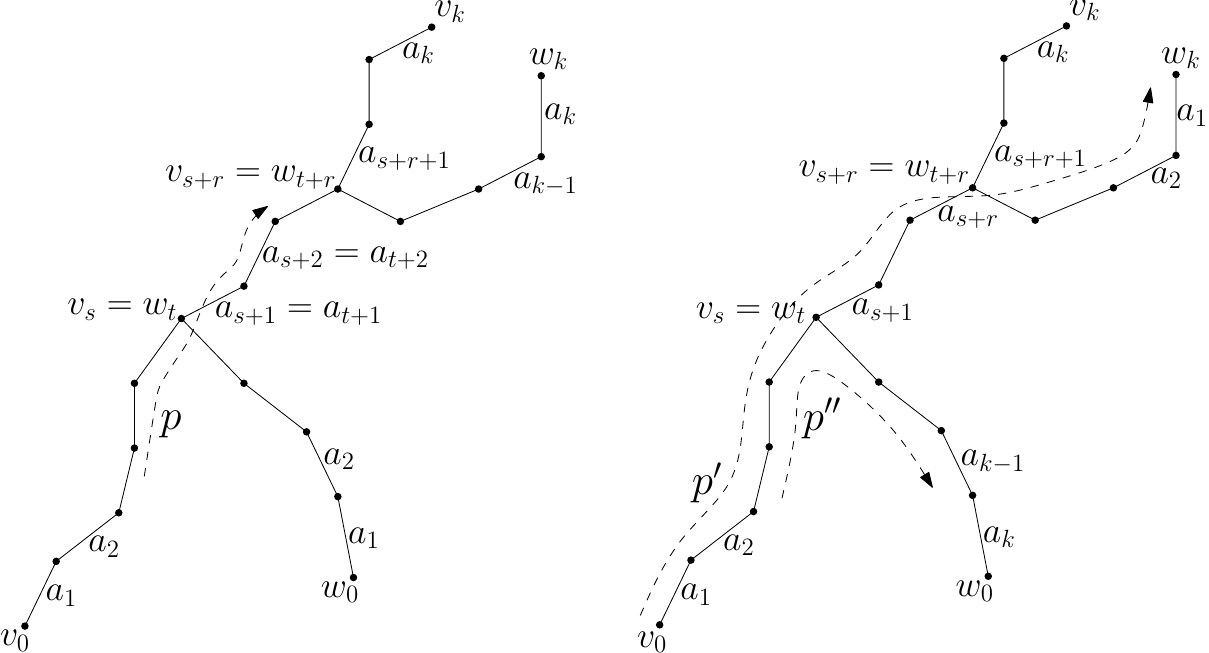}
\caption{Case IIa (left) and Case IIb (right)}
\label{f:IIb}
\end{figure}
In this case we consider separately two subcases:
\medskip

(i) If $s=t$ then we consider the path $(v_{s-1},v_s,w_{s-1})$, if $s>0$, or the path $(v_{s+r+1},v_{s+r},w_{s+r+1})$ otherwise.
We obtain the labelling:
\begin{align*}
(\underline{a_{s}},\underline{\bar a_s})\; \mr{or}\; (\underline{\bar a_{s+r+1}},\underline{a_{s+r+1}}).
\end{align*}
\medskip

(ii) If $s> t$ then we obtain a path $p:=(v_t,v_{t+1},\ldots, w_t,w_{t+1},\ldots,w_s)$ of length bounded from above by
\begin{align}
\label{e:m10}
2k\leqslant 2\lambda \, \gi \tn.
\end{align}
Its labelling has the form:
\begin{align}
\label{e:m9}
(\underline{a_{t+1},a_{t+2},\ldots,a_s},\underline{a_{t+1},a_{t+2},\ldots,a_s}).
\end{align}
(The two above cases are `repetitive' labellings as in \cite{AloGry}.)

\medskip

\noindent
\emph{(Case IIb): The labelling of a directed edge $w_{i+1}w_{i}$ is $a_{k-i}$} (see Figure~\ref{f:IIb} on the right).
In this case we consider separately three subcases:
\medskip

(i) If $s>k/3$ and $t< 2k/3$ then we consider the path $p':=(v_0,\ldots, v_s,w_{t+1},\\ \ldots, w_k)$. Its length is bounded from above by 
(\ref{e:m10}), and its labelling has the form:
\begin{align*}
(\underline{a_{1},a_{2},\ldots,a_{q}},\ldots,\underline{\bar a_{q},\ldots,\bar a_{2},\bar a_1}),
\end{align*}
with
\begin{align}
\label{e:m12}
q > k/3. 
\end{align}
\medskip

(ii) If $t\geqslant 2k/3$, then $s\geqslant t \geqslant 2k/3$. In this case we consider the path $p'':= (v_{k-t},\ldots,v_s,w_{t-1},\ldots,w_{k-s})$. Its length is bounded from above by (\ref{e:m10}), and its labelling has the form:
\begin{align}
\label{e:m13}
(\underline{a_{k-t+1},\ldots,a_{s}},\underline{a_{k-t+1},\ldots,a_s}).
\end{align}
\medskip

(iii) If $s\leqslant k/3$ then for $s+r<2k/3$ we are in one of the previous cases (with $s>k/3$) after changing indexes $i$ to $k-i$ and renaming.
Thus we may assume that $s+r\geqslant 2k/3$. 
Then we consider the path $p':=(v_0,v_1,\ldots, v_s,w_{t+1},\ldots, w_k)$. Its length is bounded from above by (\ref{e:m10}), and its labelling has the form:
\begin{align}
\label{e:m14}
(\underline{a_{1},a_{2},\ldots,a_{q}},\ldots,\underline{\bar a_{q},\ldots,\bar a_{2},\bar a_1}),
\end{align}
with
\begin{align}
\label{e:m15}
q \geqslant k/3. 
\end{align}

\begin{lemma}
\label{l:mLLL}
Let $E:=\lambda/(16\lambda+8A)$ and $F:=(2\lambda+A)\gi \tn$. Assume that there are two different paths in $(\tn,m_n)$, of length at least $\lambda \, \gi \tn$, with the same labelling. Then one of the following situations happens:
\begin{enumerate}
\item[(A)]
there is a path $p$ with the labelling $(\underline{a_{i_1},a_{i_2},\ldots,a_{i_q}},\ldots,\underline{a_{i_1},a_{i_2}\ldots,a_{i_q}})$, for
$|p|\leqslant F$ and $q\geqslant E|p|$;
\item[(B)]
there is a path $p$ with the labelling $(\underline{a_{i_1},a_{i_2},\ldots,a_{i_q}},\ldots,\underline{\bar a_{i_q},\ldots,\bar a_{i_2},\bar a_{i_1}})$, for 
$|p|\leqslant F$ and $q\geqslant E|p|$.
\item[(C)]
there is a path $p$ with the labelling $(\underline{a_{i}},\underline{\bar a_{i}})$.

\end{enumerate}
\end{lemma}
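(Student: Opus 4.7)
The proof is a bookkeeping consolidation of the case analysis already carried out in the preceding pages: each of the subcases Ia, Ib(i), Ib(ii), IIa(i), IIa(ii), IIb(i), IIb(ii), IIb(iii) has exhibited an explicit path $p$ whose labeling has one of the three advertised shapes, and what remains is merely to verify that the length bound $|p|\leqslant F$ and the density bound $q\geqslant E|p|$ hold uniformly. My plan is to normalize the setup, split into the two main cases (edge-disjoint and edge-sharing), and then read off the conclusions in each subcase.

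For the setup I would truncate the two hypothetical paths of length at least $\lambda\,\gi\tn$ to length exactly $k:=\lfloor \lambda\,\gi\tn\rfloor$; since $k<\gi\tn$ both are simple, and we are literally in the position $\ov v=(v_0,\ldots,v_k)$, $\ov w=(w_0,\ldots,w_k)$ treated above. In Case I (edge-disjoint), condition (\ref{e:dig}) yields a minimal connector $\ov u$ of length $r\leqslant A\,\gi\tn$ between $v_s$ and $w_t$, with $s\geqslant t\geqslant k/2$ after renaming. The concatenation $p$ along $\ov v$, $\ov u$, $\ov w$ displayed previously has length at most $2k+r\leqslant (2\lambda+A)\gi\tn=F$ and a labeling of type (B) in subcases Ia and Ib(i), or of type (A) in subcase Ib(ii), with repetition length $q\geqslant k/4$ throughout. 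In Case II (edge-sharing), the degenerate subcase $s=t$ of IIa immediately delivers a two-edge path labelled $(\underline{a_s},\underline{\bar a_s})$ (or its mirror), which is conclusion (C); in every other subcase the analogous concatenation gives a path of length $\leqslant 2k\leqslant F$ with labeling of type (A) or (B) and repetition length either $q\geqslant k/3$ or $q=s-t=|p|/2$.

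Finally one verifies $q\geqslant E|p|$ uniformly. The tightest inequality arises in Case Ib, where $q\geqslant k/4$ while $|p|$ may be as large as $(2\lambda+A)\gi\tn$; plugging in $k\geqslant \lambda\,\gi\tn-1$ recovers exactly $E=\lambda/(8\lambda+4A)$ up to a lower-order correction, and in every other subcase the inequality is strictly slacker. I do not expect any real obstacle beyond the preceding geometric case analysis; the one mildly delicate point is the integer rounding correction of order $O(1/\gi\tn)$ introduced by $k=\lfloor \lambda\,\gi\tn\rfloor$, which is absorbed by the standing assumption that $\gi\tn$ is large (after passing to a subsequence if needed).
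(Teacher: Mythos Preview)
Your proposal is correct and follows essentially the same route as the paper: the paper's proof is precisely this bookkeeping step, matching Ib(ii), IIa(ii), IIb(ii) to conclusion (A), matching Ia, Ib(i), IIb(i), IIb(iii) to conclusion (B), and matching IIa(i) to conclusion (C), with the length and density bounds read off from the displayed formulas. Your identification of Case~Ib as the bottleneck (where $q\geqslant k/4$ against $|p|\leqslant (2\lambda+A)\gi\tn$ yields exactly $E=\lambda/(8\lambda+4A)$) and your flagging of the $O(1)$ rounding correction from $k=\lfloor\lambda\,\gi\tn\rfloor$ are, if anything, slightly more careful than the paper, which quotes the rougher lower bounds $q>\lambda\,\gi\tn/8$ and invokes $E<1/8$ without further comment.
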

\begin{proof}
We show that all the cases analyzed earlier in this section lead to (A), (B) or (C). This covers all the possible configurations.
\medskip

(A) corresponds to the cases: Ib(ii), IIa(ii) and IIb(ii). The estimates on $|p|$ and $q$ follow then from: formula (\ref{e:m1}) and formula (\ref{e:m7}), or from (\ref{e:m9}), or from (\ref{e:m13}), and from the fact that 
\begin{align}
\label{e:m16}
E=\frac{\lambda}{16\lambda+8A}< \frac{1}{16}.
\end{align}
\medskip

(B) corresponds to one of the cases:  Ia, Ib(i), IIb(i) or IIb(iii). The estimates on $|p|$ and $q$ follow then from: (\ref{e:m1}) and (\ref{e:m3}) or (\ref{e:m5}), or from (\ref{e:m10}) and (\ref{e:m12}) or (\ref{e:m15}), using (\ref{e:m16}).
\medskip

(C) corresponds to Case IIa(i).
\end{proof}

Now we show, similarly as in the preceding Subsection~\ref{s:LLL}, that there exists a labelling $(\tn,l'_n)$ such that none of the patterns
(A), (B) or (C) from Lemma~\ref{l:mLLL} appears. This will imply that no two different paths in $\tn$ of length at least $\lambda \, \gi \tn$ have the same $l'_n$--labelling. This will also mean that $l'_n$ is reduced. The labelling $l'_n$ will use $L'$ labels.
Define $L'$ as (here $e$ denotes the Euler constant)
\begin{align}
\label{e:m20}
L':=\left \lceil (4De^4)^{\frac{1}{E}} \right \rceil,
\end{align}
where $E=\lambda/(16\lambda +8A)$ is the constant from Lemma~\ref{l:mLLL}.
Call a labelling of a path $p$ \emph{bad} if it is of the form (A), (B) or (C) as in Lemma~\ref{l:mLLL}. 
Let $M_i$ denote the number of possibilities of labelling a fixed simple path of length $i$
in a bad way by $L'$ letters.
Let $N_i$ denote the number of possibilities of labelling a fixed simple path of length $i$ by $L'$ letters.
Observe that
\begin{align}
\label{e:m21}
M_i\leqslant 2L'^{(1-E)i},
\end{align}
and
\begin{align}
\label{e:m22}
N_i= L'^{i}.
\end{align}
\begin{lemma}
\label{l:scmLLL}
There exists a labelling $(\tn,l'_n)$ with $L'$ labels such that, for $2\leqslant i\leqslant F=(2\lambda +A)\gi \tn$ no $l'_n$--labelling of a path of length
$i$ is bad.
\end{lemma}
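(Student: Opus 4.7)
The plan is to apply the Lov\'asz Local Lemma~\ref{l:LLL} directly, in close analogy with the proof of Lemma~\ref{l:scLLL}, using the counting estimates (\ref{e:m21}) and (\ref{e:m22}) together with the definition (\ref{e:m20}) of $\bar L$.

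Randomly and independently label the edges of $\tn$ by one of $\bar L$ labels, uniformly. For each integer $i$ with $2\leqslant i \leqslant F$ and each path $p$ of length $i$ in $\tn$, let $A(p)$ be the event that the resulting $\bar l_n$--labeling of $p$ is bad in the sense of Lemma~\ref{l:mLLL} (i.e.\ of type (A), (B) or (C)). Partition the collection of events as $\cA=\cA_2\cup\cA_3\cup\cdots\cup\cA_F$, where $\cA_i$ consists of the events attached to paths of length $i$. By (\ref{e:m21}) and (\ref{e:m22}),
\begin{align*}
p_i:=\Pr(A(p))\leqslant \frac{2\bar L^{(1-E)i}}{\bar L^i}=2\bar L^{-Ei}.
\end{align*}
Since $A(p)$ depends only on the labels of the edges of $p$, and a fixed path of length $i$ shares an edge with at most $ijD^{j}$ paths of length $j$ in $\tn$ (at most $i$ choices of the shared edge, $j$ positions within the new path, and $D^{j-1}$ extensions), we may take $\Delta_{ij}=ijD^j$, and each $A(p)\in\cA_i$ is independent of the complement of a set meeting $\cA_j$ in at most $\Delta_{ij}$ events.

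Now set $a=2D$ and $a_i=a^{-i}=(2D)^{-i}$, so $a_i\leqslant 1/4<1/2$ and hence $1-a_i\geqslant e^{-2a_i}$. Using $\sum_{j=1}^{\infty}j/2^j=2$ and $D^j/a^j=2^{-j}$, I compute
\begin{align*}
\sum_{j=2}^{F}a_j\Delta_{ij}\leqslant i\sum_{j=1}^{\infty}jD^j(2D)^{-j}=2i,
\end{align*}
so that $a_i\prod_{j}(1-a_j)^{\Delta_{ij}}\geqslant a_i\exp\bigl(-4i\bigr)=(2D)^{-i}e^{-4i}$. Therefore condition (ii) of Lemma~\ref{l:LLL} reduces to
\begin{align*}
2\bar L^{-Ei}\leqslant (2D)^{-i}e^{-4i},
\end{align*}
equivalently $\bar L^{E}\geqslant 2^{1/i}\cdot 2D\cdot e^{4}$. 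Since $i\geqslant 2$, the right-hand side is at most $4De^4$, which is exactly guaranteed by the choice $\bar L=\lceil(4De^4)^{1/E}\rceil$ in (\ref{e:m20}).

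Consequently both hypotheses of Lemma~\ref{l:LLL} hold, and $\Pr\bigl(\bigcap_{A\in\cA}\bar A\bigr)>0$. Thus there exists a labeling $(\tn,\bar l_n)$ with $\bar L$ labels for which no path of length $i\in[2,F]$ has a bad labeling, as required. The only delicate point is the verification that the chosen $\bar L$ is large enough; this is built into the exponent $1/E$ in (\ref{e:m20}) and the rest of the argument is a routine LLL computation identical in spirit to the one in Lemma~\ref{l:scLLL}.
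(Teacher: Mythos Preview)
Your proof is correct and follows essentially the same approach as the paper's: the same random labeling, the same events $A(p)$ grouped by path length, the same dependency bounds $\Delta_{ij}=ijD^j$, and the same choice $a_i=(2D)^{-i}$. The only cosmetic difference is that you keep the sharper estimate $p_i\leqslant 2\bar L^{-Ei}$ and then invoke $i\geqslant 2$ to absorb the extra factor $2^{1/i}\leqslant\sqrt 2<2$, whereas the paper first weakens to $p_i\leqslant(2/\bar L^E)^i$; both routes land on the same requirement $\bar L^E\geqslant 4De^4$ furnished by~(\ref{e:m20}).
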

\begin{proof}
We use the Lov\'asz Local Lemma \ref{l:LLL} as in the proof of Lemma~\ref{l:scLLL}. 
Randomly label the edges of $\tn$ with $L'$ labels. 
For a path $p$ in $\tn$ of length $i$, let $A(p)$ denote the event that its labelling is bad.
Set $\cA_i=\{ A(p) :  p \; \mr{is\; a\; path\; of\; length}\; i \;\mr{in} \; \tn \}$. Recall (see Lemma \ref{l:LLL}) that $p_i$ denotes the probability $\mr{Pr}(A)$ for every $A\in \cA_i$. Then, by (\ref{e:m21}) and  (\ref{e:m22}), we have
\begin{align}
\label{e:m23}
p_i\leqslant \frac{2L'^{(1-E)i}}{L'^{i}}\leqslant \left( \frac{2}{L'^E}\right)^i.
\end{align}
 Each path of length $i$ shares an edge with not more than $ijD^{j}$ paths of length $j$, so that we may take 
$\Delta_{ij}=ijD^{j}$.
Let $a_i=a^{-i}$, where $a=2D$. 
Then, by using subsequently: formulas (\ref{e:m23}) and (\ref{e:m20}), the definition of $a_i$, the fact that $\sum_{j=1}^{\infty} j/2^j =2$, the definitions of $a$, $\Delta_{ij}$, and $a_j$, we obtain:

\begin{align*}
\begin{split}
p_i&\leqslant \left( \frac{2}{L'^E}\right)^i\leqslant 2^{-i}D^{-i}e^{-4i}
= a_i \exp{\left(-2\sum_{j=1}^{\infty}i\frac{j}{2^j} \right)}
\\ &<a_i \exp{\left(-2\sum_{j}ij\left(\frac{D}{a}\right)^{j} \right)}
 =a_i \exp{\left(-2\sum_{j} \Delta_{ij}a_j \right)}
\\ &= a_i\prod_j e^{-2a_j\Delta_{ij}}
\end{split}
\end{align*}
Since, by $a_j\leqslant 1/2$, we have $e^{-2a_j}\leqslant (1-a_j)$ (see the end of the proof of Lemma~\ref{l:scLLL}), we obtain finally
\begin{align*}
p_i\leqslant a_i\prod_j (1-a_j)^{\Delta_{ij}}.
\end{align*}
Therefore the hypotheses of the Lov\'asz Local Lemma are fulfilled, and we conclude that there exists a labelling $l'_n$ as required.
\end{proof}

\begin{lemma}[$C'(\lambda)$--small cancellation labelling of $\tn$]
\label{l:scm}
The labelling $(\tn,l'_n)$ with $L'$ labels is reduced and no two paths in $\tn$ of length at least $\lambda \, \gi \tn$ have the same $l'_n$--labelling.
\end{lemma}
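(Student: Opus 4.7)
The plan is to combine Lemma~\ref{l:mLLL} and Lemma~\ref{l:scmLLL} in a direct way: the former lemma asserts that any failure of the $C'(\lambda)$ condition in $\tn$ forces one of the bad labeling patterns (A), (B), (C) on some path of length between $2$ and $F$, while the latter lemma produces a labeling $\bar l_n$ that simply excludes all such patterns on paths in that length range. So both conclusions of the lemma will fall out at once, with almost no additional computation.

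For reducedness, I would argue by contrapositive. Suppose the labeling $\bar l_n$ is not reduced at some vertex $v$. Then there exist two distinct outgoing edges $vu$ and $vu'$ with $u\neq u'$ carrying the same label $a$. The length-$2$ path $(u,v,u')$ then reads $(\bar a, a)$, which is exactly a bad pattern of type (C) in the sense of Lemma~\ref{l:mLLL}. Since $2\leqslant F = (2\lambda + A)\gi \tn$ (using the standing assumption $\lambda \gi \tn > 1$), this directly contradicts Lemma~\ref{l:scmLLL}. Hence $\bar l_n$ is reduced.

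For the second conclusion, suppose for contradiction that two distinct paths $\ov v$ and $\ov w$ in $\tn$, each of length at least $\lambda \, \gi \tn$, carry the same $\bar l_n$-labeling. Pick an index $j$ at which $v_j\neq w_j$, and truncate $\ov v$ and $\ov w$ to a common window of length $k = \lfloor \lambda \, \gi \tn \rfloor$ containing $j$. This produces two \emph{distinct} paths of length exactly $k$ with the same labeling, which is the hypothesis of Lemma~\ref{l:mLLL}. That lemma yields a path $p$ in $\tn$ with $2\leqslant |p|\leqslant F$ whose $\bar l_n$-labeling is of one of the forms (A), (B), (C), contradicting Lemma~\ref{l:scmLLL}. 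Therefore no two such paths exist, completing the proof.

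The only real point requiring attention — and hence the ``main obstacle,'' though a mild one — is the truncation step: we need to guarantee that after restricting $\ov v$ and $\ov w$ to length-$k$ subpaths we still have two distinct paths, so that Lemma~\ref{l:mLLL} applies. Choosing the truncation window around a disagreement index $j$ (taking either the initial segment of length $k$ if $j\leqslant k$, or the segment $(v_{j-k},\dots,v_j)$ otherwise) handles this cleanly, using only that the original paths have length at least $k$.
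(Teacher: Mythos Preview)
Your proof is correct and follows essentially the same approach as the paper's: reducedness comes from the absence of pattern (C), and the second assertion from Lemma~\ref{l:mLLL} combined with Lemma~\ref{l:scmLLL} excluding patterns (A) and (B). Your truncation step is a bit more cautious than necessary --- Lemma~\ref{l:mLLL} as stated already applies to paths of length at least $\lambda\,\gi\tn$, so you may invoke it directly without truncating to length $k$ --- but the extra care is harmless and arguably clarifies a point the paper leaves implicit.
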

\begin{proof}
The labelling $(\tn,l'_n)$ is reduced because the situation (C) from Lem\-ma \ref{l:mLLL} does not appear. The second assertion follows from Lemma~\ref{l:mLLL} and
the fact that  none of the situations (A) and (B) appears for $l'_n$, by Lemma~\ref{l:scmLLL}.
\end{proof}


\subsection{Small cancellation labelling of \texorpdfstring{$\Theta$}{Theta}}
\label{s:fscl}
Let $(\Theta,l)=((\tn,l_n))_{n\in \bN}$ and $(\Theta,l')=((\tn,l'_n))_{n\in \bN}$ be the labellings with, respectively,
$L$ and $L'$ labels
given by Lemma~\ref{l:sc1} and Lemma~\ref{l:scm}. 
Let $(\Theta,m)=((\tn,m_n))_{n\in \bN}$ be a labelling being the product of $(\Theta,l)$ and $(\Theta,l')$.
That is, to every directed edge
$e$ in $\tn$ we assign a pair $(l(e),l'(e))$.
By Lemma~\ref{l:sc1} and Lemma~\ref{l:scm} we obtain the following main technical result of the paper (see Theorem 1 in Introduction).

\begin{theorem}[{$C'(\lambda)$--small cancellation labelling of $\Theta$}]
\label{l:c2LLL}
The labelling $(\txT,m)$ is reduced and no
$m_n$--labelling of a path of length at least $\lambda\, \gi \txT_n$ in  $\txT_n$ appears as the $m$--labelling of some other path in $\txT$.
\end{theorem}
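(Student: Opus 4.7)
The plan is to show both assertions by exploiting the two coordinates of the product labeling separately: the first coordinate $l$ controls inter-component coincidences via Lemma~\ref{l:sc1}, the second coordinate $\bar l$ controls intra-component coincidences via Lemma~\ref{l:scm}. The only thing that makes $m$ work is this separation of duties, so the proof is essentially a two-line bookkeeping argument once the definitions are unpacked.

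First I would verify that $(\Theta,m)$ is reduced. By Lemma~\ref{l:scm}, each $(\tn,\bar l_n)$ is reduced, meaning the two directed edges leaving any vertex of $\tn$ receive distinct $\bar l_n$-labels. Since $m_n(e)=(l_n(e),\bar l_n(e))$ projects onto $\bar l_n(e)$, the $m_n$-labels of those two edges differ in their second coordinate, and hence differ as labels. So $m_n$ is reduced for each $n$, and thus so is $(\Theta,m)$.

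Next I would prove the non-repetition statement. Suppose there is a path $p$ in $\tn$ of length at least $\lambda\,\gi \tn$ and another path $p'$ in $\Theta$ (different from $p$) with the same $m$-labeling. Say $p'\subseteq \Theta_{n'}$. I would split into two cases. If $n'\neq n$, then, projecting $m$ onto its first coordinate, $p$ and $p'$ carry the same $l$-labeling; in particular the $l_n$-labeling of $p$ equals an $l_{n'}$-labeling of a path in $\Theta_{n'}$. Since $|p|\geqslant \lambda\,\gi \tn$ (and any initial segment of $p$ of length $\lfloor\lambda\,\gi \tn\rfloor$ has the same property), this contradicts Lemma~\ref{l:sc1}. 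If instead $n'=n$, then $p$ and $p'$ are two distinct paths in $\tn$, both of length at least $\lambda\,\gi \tn$, and projecting $m$ onto its second coordinate shows they have the same $\bar l_n$-labeling, contradicting Lemma~\ref{l:scm}.

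The only mildly delicate point is the inter-component case, where one must make sure that passing from a path $p$ of length $\geqslant \lambda\,\gi \tn$ to a sub-path of length exactly $\lfloor\lambda\,\gi \tn\rfloor$ preserves the coincidence of labellings, so that Lemma~\ref{l:sc1} applies verbatim; this is automatic because any sub-path of a path with a given labeling also has a corresponding sub-labeling, and if two paths have equal $l$-labellings, so do their corresponding sub-paths. No other obstacles arise, and combining the two cases yields the theorem.
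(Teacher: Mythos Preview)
Your proposal is correct and follows exactly the paper's approach: the paper simply states that the theorem follows from Lemma~\ref{l:sc1} and Lemma~\ref{l:scm}, and you have spelled out the natural case split (reducedness from the $\bar l$--coordinate, inter-graph coincidences excluded by the $l$--coordinate, intra-graph coincidences excluded by the $\bar l$--coordinate). The ``mildly delicate point'' you flag is in fact already absorbed into the statement of Lemma~\ref{l:sc1}, which is phrased for paths of length at least $\lambda\,\gi\tn$, so the passage to sub-paths is unnecessary --- but it does no harm.
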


\begin{remark}
	\label{r:recurs}
	Observe that, for every $n$, a given finite labelling $(\txT_1,l_1), 
	(\txT_2,l_2),\\ \ldots, (\txT_{n},l_n)$
	from Subsection~\ref{s:LLL} can be extended to $(\txT_{n+1},l_{n+1})$ by using the brute force
	algorithm (see Lemma~\ref{l:scLLL}). The same holds for the labelling $(\txT,l')$ from Subsection~\ref{s:mLLL}. Therefore, if the sequence $\txT$ of finite graphs is recursive,  
	the small cancellation labelling $(\txT,m)$, as well as the resulting small cancellation presentation (see Section~\ref{s:grexp}) are recursive. This observation is important in particular in view of applications described in Subsection~\ref{s:exman} below.
	
	Recursive sequences of finite graphs $\txT$ (satisfying our assumptions from the beginning of Section~\ref{s:constr}) exist. Examples are expander graphs given by Cayley graphs of some finite 
	linear groups; see Subsection~\ref{s:discus} for some details.
\end{remark}


\subsection{Remarks on the Gromov labelling}
\label{s:discus}
In this subsection we recall the idea of Gromov's construction of a `small cancellation' labelling of some expanders \cite{Gro}, following its exposition presented in \cite{AD}. 
We remark that a construction of a labelling as in Lemma~\ref{l:scm} could be obtained using Gromov's construction. (Let us also remark that it could be concluded from \cite[Proposition 7.4]{OW}.)
Further, we explain why one cannot obtain the small cancellation
labelling out of the one of Gromov, that is, why Lemma~\ref{l:sc1} does not hold for the generic labelling.
\smallskip

For  primes $p\neq q$ congruent to $1$ modulo $4$ and with the Legendre symbol $\left( \frac{p}{q} \right)=-1$, let $X^{p,q}$ be the Cayley graph of the projective linear group $PGL_2(q)$, for some particular set of $(p+1)$ generators, as in \cite[Section 7.2]{AD}. 
Fix $p$.
Throughout this subsection we consider subsequences of the sequence $\Theta=(\Theta_n)_{n\in \bN}$, where $\Theta_n=X^{p,q_n}$,
with $q_n$ denoting the $n$--th prime. 
Then the family $\Theta$ is an expander with the constant degree $D:=p+1$, with $\gi \Theta_n \to \infty$, as $n\to \infty$, and for which 
there exists a constant $A$ such that (\ref{e:dig}) holds.
Gromov \cite{Gro} constructs a labelling $(\Theta,l')=((\tn,l'_n))_{n\in \bN}$ (also for a class of expanders) satisfying some small cancellation conditions.

Let $G_0$ be the free group generated by a finite set $S$. The labelling $l'$ is a map $l'\colon \Theta \to W$ onto the bouquet of $|S|$
oriented loops labelled by $S$ (whose fundamental group is $G_0$). 
The labelling $(\Theta,l')$ is obtained inductively. 

We begin with $\Theta_1$ and we find a labelling $l'_1\colon \Theta_1 \to W$ satisfying
some small cancellation conditions. We obtain a hyperbolic group $G_1$ being the quotient of the free group $G_0$ by the normal subgroup
generated by images of $l'_1$. At the inductive step, having a hyperbolic group $G_{n-1}$ generated by $S$, a random (generic) labelling
$l'_{n}\colon \Theta_n \to W$ satisfies the \emph{very small cancellation conditions} for the small cancellation constant arbitrarily close to $0$ by \cite[Proposition 5.9]{AD}.
The labelling $l'$ may be used to construct a labelling with properties as in Lemma~\ref{l:scm}.
\smallskip

However, out of $l'$ one cannot derive the required small cancellation labelling as in Theorem~\ref{l:c2LLL}. Since at each step Gromov's labelling is the generic labelling appearing as $\gi \tn \to \infty$, it is clear that the following holds: For any fixed labelling of a path of a fixed length, with overwhelming probability this labelling will appear among labellings of $\tn$ as $n\to \infty$. In particular, labellings of all cycles in graphs obtained at earlier inductive steps will appear as labellings of paths in later steps. 
This is the reason why Gromov's labelling is not a graphical small cancellation labelling.
Let $G$ be the group being the limit of $(G_n)_{n\in \bN}$.
Observe that endpoints of a simple path in $\tn$ (for large $n$) labelled the same as a cycle in some $\Theta_m$ with $m\ll n$ will be mapped to a same point by the map $\tn \to G$ defined by the labelling.
Since there are such paths of arbitrarily large length, this labelling does not define a coarse embedding of $\Theta$ into $G$. There is only a weak embedding or, stronger, a map $f\colon \Theta \to G$ satisfying the following condition: for $x,y\in \tn$ one has $d_G(f(x),f(y))\geqslant Bd_{\Theta}(x,y)-c_n$, where $B$ is a universal constant, and additive constants $c_n>0$ grow to infinity with $n\to \infty$; see \cite[Section 4.8]{Gro} and \cite[Theorem 7.7]{AD}.


\section{Groups with \texorpdfstring{$\Theta$}{Theta} in Cayley graphs}
\label{s:grexp}

In this section we construct groups, such that $\Theta$ embeds isometrically into their Cayley graphs -- this means that the vertex
set of every connected component $\tn$ embeds isometrically.
The groups are defined by \emph{graphical small cancellation presentations}. 
The graphical small cancellation theory is a straightforward generalization of the classical small cancellation theory -- see e.g.\ the book by Lyndon and Schupp \cite{LSch} for the exposition of the latter.  
The introduction of the graphical theory is attributed to Gromov \cite{Gro}, but the methods had appeared implicitly before e.g.\ in the work of Rips and Segev \cite{RipsSegev}. 
In order to apply small cancellation we use the sequence $\stxT$ as follows. 
Let $\Gamma$ be a finite graph and let $(\varphi_n \colon \stxT_n \to \Gamma)_{n\in \mathbb N}$ be a family of local isometries of graphs. They form a
\emph{graphical presentation}
\begin{align}\label{eq:gpres} 
\langle \Gamma \; | \; \stxT \rangle,
\end{align}
defining a group $G:=\pi_1(\Gamma)/\langle\langle \varphi_{\ast}(\pi_1(\stxT_n))_{n\in \mathbb N} \rangle\rangle$.
In our case we choose $\Gamma$ to be a bouquet of loops
 with local isometries $\varphi_n$ corresponding to the labellings $\stxm_n$. Each loop in the bouquet 
 corresponds to one generator of $G$.

\subsection{\texorpdfstring{$C'(\lambda)$}{C'(lambda)}--small cancellation complexes.}
\label{s:c'cpl}
This subsection follows closely \cite[Section 2]{AO}.
Here we describe the spaces that we will work with further. 
Let $(\varphi_i \colon r_i \to \xj)_{i\in \mathbb N}$ be a family of local isometries of finite graphs $r_i$.
We will call these finite graphs \emph{relators}.
The \emph{cone} over the relator $r_i$ is the quotient space $\mr{cone}\, r_i:=(r_i \times [0,1])/\{ (x,1) \sim (y,1)\}$.
The main object of our study in this section is the \emph{coned-off space}:
\begin{align*}
X:=X^{(1)}\cup_{(\varphi_i)} \bigcup_{i\in \mathbb N} \mr{cone}\,r_i,  
\end{align*}
where $\varphi_i$ is the map $r_i\times \{ 0 \} \to \xj$.
We assume that $X$ is simply connected.
The space $X$ has a natural structure of a CW complex and we call $X$ a `complex'. 
If not specified otherwise, we consider the \emph{path metric}, denoted by $d(\cdot,\cdot)$, defined on the $0$--skeleton $X^{(0)}$ of $X$ by (combinatorial) paths in the $1$--skeleton $X^{(1)}$. \emph{Geodesics} are the shortest paths in $X^{(1)}$ for this metric. (In other words, a geodesic between vertices $p,q\in X^{(0)}$ is a shortest sequence $p_0:=p,p_1,\ldots,p_k:=q$ of vertices such that $p_i$ and $p_{i+1}$ are 
connected by an edge in $X^{(1)}$.)

\medskip
A \emph{path in $X$} is a locally injective simplicial map $p\to X$ from a graph $p$ homeomorphic to a segment. 
A path $p \to X$ is a \emph{piece} if there are relators $r_i,r_j$ such that $p\to X$ factors as $p \to r_i \stackrel{\varphi_i}{\longrightarrow} X$ and as $p\to r_j \stackrel{\varphi_j}{\longrightarrow}  X$, but there is no isomorphism $r_i \to r_j$ that makes the following diagram commutative.
$$  \begindc{\commdiag}[20]
\obj(12,1)[a]{$r_i$}
\obj(35,2)[b]{$X$}
\obj(35,1)[b']{}
\obj(35,16)[c]{$r_j$}
\obj(35,17)[c']{}
\obj(12,16)[d]{$p$}
\obj(12,17)[d']{}
\mor{a}{b'}{}
\mor{c}{b}{}
\mor{a}{c}{}
\mor{d'}{c'}{}
\mor{d'}{a}{}
\enddc  $$

\noindent
 This means that $p$ occurs in $r_i$ and $r_j$ in two essentially distinct ways.

For $\lambda \in (0,1)$, we say that the complex $X$ satisfies the \emph{$C'({\lambda})$--small cancellation condition} (or that $X$ is a \emph{$C'({\lambda})$--complex}) if 
every piece $p\to X$ factorizing through $p\to r_i \stackrel{\varphi_i}{\longrightarrow} X$ has length (that is, the number of edges in $p$) strictly less than $\lambda\,  \gi r_i$.

For a given graphical presentation $\langle \Gamma \; | \; \stxT \rangle$, we define an associated complex $X$ as follows. 
The \emph{coned-off space} is obtained by gluing, using the (labelling) maps $\stxT_n \to \Gamma$, cones over graphs $\stxT_n$ to $\Gamma$. 
The fundamental group of this space is $G$. 
The Cayley graph of $\langle \Gamma \; | \; \stxT \rangle$ is the $1$--skeleton $X'^{(1)}$ of the universal cover $X'$ of the coned-off space. We define maps $\varphi_i' \colon r_i \to \xj$ as lifts of the maps $\stxT_n \to \Gamma$. In particular, the graphs $r_i$ are copies of graphs $\stxT_n$, for various $n$. 
Finally, we define $X$ as the quotient complex of $X'$ where we identify the cones attached
by $\varphi_i'\colon r_i \to X'^{(1)}$ and $\varphi_j'\colon r_j \to X'^{(1)}$ when there is an isomorphism
of labelled (by the labelling induced by  $\stxT_n \to \Gamma$) graphs $r_i\to r_j$ such that $\varphi_i'$ factors as $r_i\to r_j \stackrel{\varphi_j'}{\longrightarrow} X'^{(1)}$. The maps $\varphi_i \colon r_i \to X$ are the compositions of the maps $\varphi_i'$ with the quotient map $X'\to X$.
If the complex $X$ is a $C'({\lambda})$--complex then we call the presentation $\langle \Gamma \; | \; \stxT \rangle$ a \emph{graphical $C'({\lambda})$--small cancellation presentation}.

The following lemma attributed to Gromov is crucial for our results.
\begin{lemma}[{\cite[Theorem 1]{Oll}} and {\cite[5.10]{Gruber}}]
\label{l:remb}
For the Cayley graph $\xj$  of a {graphical $C'(1/6)$--small cancellation presentation} $\langle \Gamma \; | \; \stxT \rangle$ the maps
$\varphi_i\colon r_i \to \xj$ are isometric embeddings.
\end{lemma}


\subsection{The groups}
\label{s:grex}
In this section we use the labelling $(\Theta,m)$ as in Theorem~\ref{l:c2LLL}, obtained for $\lambda \leqslant 1/24$.

\begin{theorem}[Groups containing $\Theta$]
\label{t:mainex}
Let $G$ be the group defined by the graphical presentation $\langle \Gamma \, | \, \stxT \rangle$, where the local isometries
$\stxT_n \to \Gamma$ are defined by labellings $\stxm_n$. Then each $\stxT_n$ embeds isometrically into the Cayley graph of $G$ 
given by $\langle \Gamma \, | \, \stxT \rangle$.
\end{theorem}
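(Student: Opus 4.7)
The plan is to realize the Cayley graph of $G$ as the $1$--skeleton of a naturally defined coned-off complex $X$ built from the presentation $\langle \Gamma \mid \Theta\rangle$, and then to derive the conclusion by verifying the geometric $C'(\lambda)$--condition of Subsection~\ref{s:c'cpl} and quoting Lemma~\ref{l:remb}.

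First I would set up the complex. Form the presentation $2$--complex by taking $\Gamma$ and, for each $n$, attaching the cone $\mr{cone}\, \Theta_n$ along $\varphi_n$. Its universal cover $X$ has $1$--skeleton equal to the Cayley graph $\mr{Cay}(G)$ with respect to the symmetrized label set of $m$, and its relators are the lifts of the $\tn$ along the covering $X \to \Gamma \cup \bigcup_n \mr{cone}\,\tn$. Because each $\varphi_n\colon (\tn,m_n)\to \Gamma$ is a locally injective graph morphism (the labeling $(\Theta,m)$ is reduced by Theorem~\ref{l:c2LLL}), each such lift $r$ is mapped isomorphically onto some $\tn$, and we regard $X$ as the coned-off space of Subsection~\ref{s:c'cpl} with the family of relators $(r_i \to \xj)$ given by these lifts.

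Next I would verify the $C'(\lambda)$--small cancellation condition on $X$ directly from Theorem~\ref{l:c2LLL}. A piece is a subgraph $p\hookrightarrow \xj$ that factors through two relators $r_i \cong \tn$ and $r_j \cong \Theta_{n'}$ in essentially different ways. Since $\xj$ carries the induced $m$--labeling and both factorizations are label-preserving, the existence of such a piece of diameter $\geqslant \lambda\,\gi r_i = \lambda\,\gi \tn$ would produce two distinct $m$--labelled paths of length at least $\lambda\,\gi \tn$ sharing the same labeling, either in different $\tn, \Theta_{n'}$ (contradicting the between-graphs part of Theorem~\ref{l:c2LLL}) or at two essentially distinct positions in the same $\tn$ (contradicting the within-graph part). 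The main obstacle here is purely bookkeeping: one needs to check that ``essentially distinct'' in the piece definition matches ``distinct paths up to an automorphism of $\tn$'' used implicitly in Theorem~\ref{l:c2LLL}, and to handle Case~(ii) (the within-graph case) carefully so that a piece crossing two relator-lifts that project to the same $\tn$ really does give two distinct paths in $\tn$ with the same $m_n$--labeling.

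Once the $C'(\lambda)$ condition is established with $\lambda\leqslant 1/24$, Lemma~\ref{l:remb} implies that every relator-inclusion $r_i \hookrightarrow X$ is an isometric embedding of graphs, where distances in $X$ are measured along $\xj=\mr{Cay}(G)$. Since each $r_i$ is an isomorphic copy of some $\tn$ and since the disjoint union $\Theta$ is canonically the union of these relators (choosing one lift per $\tn$), we obtain the desired isometric embedding $\Theta\hookrightarrow \mr{Cay}(G)$, concluding the proof.
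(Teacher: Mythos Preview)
Your proposal is correct and follows essentially the same route as the paper: build the coned-off presentation complex, pass to the universal cover so that $X^{(1)}$ is the Cayley graph, deduce the $C'(\lambda)$--condition from Theorem~\ref{l:c2LLL}, and conclude via Lemma~\ref{l:remb}. The paper compresses your middle step into a single sentence (``By Lemma~\ref{l:c2LLL} $X$ satisfies the $C'(\lambda)$--small cancellation condition''), whereas you spell out the translation from the combinatorial labeling condition to the geometric piece condition; this extra care is reasonable but not a different argument.
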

\begin{proof}
By Lemma~\ref{l:c2LLL}, the complex $X$ associated with $\langle \Gamma \, | \, \stxT \rangle$ satisfies the $C'(\lambda)$--small cancellation condition. By Lemma~\ref{l:remb}, every $r_i$ embeds isometrically into $X^{(1)}$. 
\end{proof}
In the following subsections we study more specific applications of Theorem~\ref{t:mainex}.
\subsubsection{Groups containing expanders}
\label{s:grex}
Expanders do not admit coarse embeddings into Hilbert spaces \cite{Mat}. It follows from \cite[Section 7]{HiLaSka} that groups containing coarsely expanders do not satisfy the Baum-Connes conjecture with coefficients. The following is a direct consequence of the results above and Theorem~\ref{t:mainex}.
\begin{corollary}
\label{c:Hilbert}
If $\Theta$ is an expanding sequence of graphs then the group $\langle \Gamma \, | \, \stxT \rangle$ is not coarsely embeddable into a Hilbert space, and it does not satisfy the Baum-Connes conjecture with coefficients. 
\end{corollary}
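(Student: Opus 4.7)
The plan is to deduce both statements from Theorem~\ref{t:mainex} via standard principles: coarse embeddability is inherited by subspaces, and expanders obstruct both coarse embeddability into Hilbert space and the Baum--Connes conjecture with coefficients.

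First, I would invoke Theorem~\ref{t:mainex} to obtain an isometric embedding $\stxT \hookrightarrow \mr{Cay}(G)$, where $\mr{Cay}(G)$ denotes the Cayley graph of $G$ with respect to the generating set determined by $\Gamma$. Since an isometric embedding is automatically a coarse embedding, the disjoint union $\stxT = \bigsqcup_n \tn$ coarsely embeds into $\mr{Cay}(G)$, viewed as the underlying metric space of the finitely generated group $G$ (endowed with the word metric). Note that $\stxT$ has bounded degree and so in particular bounded geometry, which is needed for the standard formulation.

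Second, I would recall the classical fact, essentially due to Gromov (and made precise by Matou\v{s}ek, Roe, and others; see e.g.\ \cite[Proposition 11.29]{NowakYu}), that an expanding sequence of bounded degree graphs does not admit a coarse embedding into any Hilbert space. Since coarse embeddability is hereditary for subspaces --- if $G$ were coarsely embeddable into $\ell_2$ then so would be $\stxT$ via the composition with the isometric embedding above --- we conclude that $G$ is not coarsely embeddable into a Hilbert space. This yields the first assertion.

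Third, for the failure of the Baum--Connes conjecture with coefficients, I would apply the theorem of Higson--Lafforgue--Skandalis \cite{HiLaSka}. Their result shows that a finitely generated group whose Cayley graph contains an expander (in a suitably weak embedded sense, which is certainly implied by an isometric embedding of bounded geometry) provides a counterexample to the Baum--Connes conjecture with coefficients; see also the discussion preceding Corollary~\ref{c:Hilbert} and the analogue \cite[Section 7]{HiLaSka}. The isometric embedding furnished by Theorem~\ref{t:mainex} is even stronger than what is required there, so the hypothesis is verified.

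The main (essentially only) point of care is to ensure that the hypotheses of the cited obstruction theorems --- non-embeddability of expanders and the Higson--Lafforgue--Skandalis construction --- really apply to our setting: concretely, that an isometrically embedded expander in a bounded geometry space is a genuine coarse obstruction. This is routine given the literature, so the corollary follows without further calculation.
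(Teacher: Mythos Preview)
Your proposal is correct and follows exactly the approach the paper intends: the corollary is stated without proof in the paper, being treated as an immediate consequence of Theorem~\ref{t:mainex} together with the standard obstructions from \cite{Gro} and \cite{HiLaSka} (cf.\ \cite[Corollaries 7.10 \& 7.12]{AD}). You have simply spelled out the hereditary argument and the relevant citations that the paper leaves implicit.
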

{The next result has been proved in \cite{WillettYu1} for groups with coarsely embedded expanders. As explained in Subsection~\ref{s:discus}, for Gromov's monster only the weak embedding is established. Therefore, our construction provides
the first examples of groups, for which the conclusion of the following corollary holds.
\begin{corollary}[{\cite[Corollary 1.7]{WillettYu1}}]
\label{c:WiYu}
Let $G$ be a group defined by the gra\-phical presentation $\langle \Gamma \, | \, \stxT \rangle$, where the local isometries
$\stxT_n \to \Gamma$ are defined by labellings $\stxm_n$, and where $\stxT$ is the sequence of expanding graphs with growing girth.
Let $X$ be the image of the isometric embedding of $\stxT$ into the Cayley graph  $Y$ of $G$. For each $n\in \bN$, let $X_n =
\{y\in Y\,  |\, d_Y(y,X) \leqslant  n\}$. Let $A_n = l^{\infty}(X_n,\mathcal K)$ and $A = \lim_{n\to \infty}l^{\infty}(X_n,\mathcal K)$, where $\mathcal K$ is the algebra of compact operators on a given infinite dimensional separable Hilbert space. Then the right action of $G$
on $Y$ gives $A$ the structure of a $G$--$C^{\ast}$--algebra and:
\begin{enumerate}
\item
the Baum-Connes assembly map for $G$ with coefficients in $A$ is injective;
\item
the Baum-Connes assembly map for $G$ with coefficients in $A$ is not surjective;
\item
the maximal Baum-Connes assembly map for $G$ with coefficients in $A$ is an isomorphism.
\end{enumerate}
\end{corollary}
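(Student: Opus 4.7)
The plan is to deduce the corollary directly from \cite[Corollary 1.7]{WillettYu1}, which establishes exactly these three conclusions for any finitely generated group that contains an isometrically embedded sequence of expanders with growing girth, with respect to the coefficient $C^{\ast}$-algebra built in the manner described in the statement. All of the substantive work has therefore already been done elsewhere; what I need to produce is the verification that the hypotheses of that theorem are in force.

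First I would check finite generation and the isometric embedding. Since $\Gamma$ is a finite bouquet of loops, the graphical presentation $\langle \Gamma \,|\, \stxT \rangle$ endows $G$ with a finite generating set $S$ corresponding to the loops of $\Gamma$, so $G$ is finitely generated and $Y$ is the associated Cayley graph. Applying Theorem~\ref{t:mainex} (valid because the labellings $\stxm_n$ satisfy the $C'(\lambda)$-small cancellation condition of Theorem~\ref{l:c2LLL} for any preassigned $\lambda \leqslant 1/24$), every $\stxT_n$ embeds isometrically into $Y$. Let $X$ denote the resulting image. By hypothesis, $\stxT$ is an expanding sequence with $\gi \stxT_n \to \infty$, so $X$ is an isometrically embedded expander inside $Y$, which is precisely the input required by \cite{WillettYu1}.

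Next I would match the coefficient algebra. The neighborhoods $X_n = \{y \in Y : d_Y(y,X) \leqslant n\}$, the algebras $A_n = l^{\infty}(X_n, \mathcal{K})$, the inductive limit $A = \varinjlim A_n$, and the right-translation $G$-action on $Y$ that induces the $G$-action on $A$ are defined here verbatim as in \cite{WillettYu1}, so no translation between conventions is needed. Given these identifications, items (1), (2), and (3) follow at once from \cite[Corollary 1.7]{WillettYu1}.

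The only place where any new input is required is the isometric (as opposed to merely weak) embedding of the expander, and this is precisely the point emphasized in Subsection~\ref{s:discus}: Gromov's generic labeling produces only a weak embedding and therefore does not feed into the Willett--Yu machinery, whereas the $C'(\lambda)$-labeling provided by Theorem~\ref{l:c2LLL} does. The main obstacle to the corollary was thus the construction of such a group at all, which Theorem~\ref{t:mainex} supplies; the deduction of the three Baum--Connes conclusions itself is then a direct citation, and no further estimates or constructions should be needed.
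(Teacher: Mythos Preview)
Your proposal is correct and matches the paper's approach exactly: the paper gives no separate proof of this corollary, treating it as a direct application of \cite[Corollary~1.7]{WillettYu1} once Theorem~\ref{t:mainex} supplies a group with an isometrically (hence coarsely) embedded expander of growing girth. The only minor imprecision is that Willett--Yu's hypothesis is a coarse embedding rather than an isometric one, but since isometric embeddings are in particular coarse, your verification goes through unchanged.
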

Similarly, the existence of groups with coarsely embedded expanders is crucial for \cite[Section 7]{BGW}.}
\subsubsection{Exotic aspherical manifolds}
\label{s:exman}
Sapir \cite{Sapir} developed a technique of embedding groups with combinatorially aspherical recursive
presentation complexes into groups with finite combinatorially aspherical
presentation complexes. 
The presentation (\ref{eq:gpres}) defined by the labelling $(\Theta,m)$ from Theorem~\ref{l:c2LLL} is aspherical; see e.g.\ \cite{Oll}.
It is also recursive -- the brute force algorithm can be used to find the labelling $(\Theta, m)$ -- see Remark~\ref{r:recurs}.
By embedding the group $\langle \Gamma \, | \, \stxT \rangle$ from Corollary~\ref{c:Hilbert} into a finitely presented group we obtain the first examples of such groups coarsely containing expanders.
Therefore, using Sapir's techniques and Theorem~\ref{t:mainex} we obtain the first examples of manifolds as follows.
\begin{corollary}
\label{c:Sapir} 
There exist closed aspherical manifolds of dimension $4$ and higher
whose fundamental groups contain coarsely embedded expanders.
\end{corollary}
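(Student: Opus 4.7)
The plan is to combine Theorem~\ref{t:mainex} with Sapir's embedding technique and a standard closed aspherical manifold construction (Davis reflection trick). First, I would start with an expanding sequence $\Theta$ satisfying the hypotheses of Section~\ref{s:constr}, and let $G = \langle \Gamma \, | \, \stxT\rangle$ be the group from Corollary~\ref{c:Hilbert}; by Theorem~\ref{t:mainex}, $\Theta$ embeds isometrically --- hence quasi-isometrically --- into the Cayley graph of $G$.

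Next, I would verify that $G$ falls within the scope of Sapir's machinery. The labelling $m$ from Theorem~\ref{l:c2LLL} can be produced algorithmically using the constructive Moser--Tardos version of the Lov\'asz Local Lemma (mentioned at the end of Subsection~\ref{s:inG}), so the family of relators $\stxT_n$ is recursively enumerable and the presentation $\langle \Gamma \, | \, \stxT\rangle$ is recursive. Moreover, the $C'(\lambda)$--small cancellation condition guarantees that its presentation complex is combinatorially aspherical. Invoking \cite{Sapir}, I would embed $G$ into a finitely presented group $H$ whose presentation $2$--complex is finite and combinatorially aspherical. By the quasi-isometric refinement of this embedding (provable as in \cite{BORS}, as noted in the paragraph preceding the statement), the composition $\Theta \hookrightarrow G \hookrightarrow H$ is again a quasi-isometric embedding.

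The last step is to realise $H$ as an undistorted subgroup of $\pi_1(M^n)$ for some closed aspherical manifold $M^n$ of dimension $n\geq 4$. Here I would apply the Davis reflection trick: take the finite aspherical presentation $2$--complex $K_H$ of $H$, thicken it to a compact aspherical manifold with corners embedded in a PL-sphere, and form the associated Davis complex. This produces a closed aspherical manifold $M^n$ (of any dimension $n \geq 4$) which retracts onto a copy of $K_H$. The retraction realises $H = \pi_1(K_H)$ as a retract, hence an undistorted, quasi-isometrically embedded subgroup of $\pi_1(M^n)$. Composing with the previous step yields $\Theta$ quasi-isometrically embedded in $\pi_1(M^n)$.

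The main obstacle I anticipate is the last step: to conclude quasi-isometric embedding one must upgrade the algebraic retraction to a geometric (Lipschitz) one at the level of universal covers, and for dimension $n=4$ one must check that the $2$--dimensional complex $K_H$ can be handled by the Davis construction in this boundary dimension. Both points are known to specialists but are the most delicate parts of the argument; everything else is a direct composition of the quasi-isometric embeddings built at each stage.
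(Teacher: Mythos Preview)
Your proposal is correct and follows essentially the same route as the paper. The paper's argument is the terse paragraph preceding the corollary: apply Theorem~\ref{t:mainex} to get $\Theta$ isometrically in $G=\langle \Gamma\,|\,\stxT\rangle$, invoke Sapir's aspherical Higman embedding \cite{Sapir} (whose hypotheses you correctly identify --- recursive presentation via the constructive Lov\'asz Local Lemma, combinatorial asphericity via $C'(\lambda)$), note that this embedding is quasi-isometric as in \cite{BORS}, and then pass to closed aspherical manifolds. The only difference is that you spell out the last step as the Davis reflection trick, whereas the paper simply absorbs it into ``his techniques'' --- Sapir's paper already packages the manifold construction. Your worry about the retraction is unfounded: a group retract is automatically undistorted, since the retraction is Lipschitz on Cayley graphs.
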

%

\section{Walls}
\label{s:wall}
In this section and in the next Section~\ref{s:wlac} we develop a theory that will allow us in Section~\ref{s:pwna} to show that the
group we construct there acts properly on a space with walls. We use here the notation from Section~\ref{s:c'cpl} concerning $C'(\lambda)$--complexes. The current section is very similar to \cite[Section 3]{AO}.
\medskip

Recall, that for a set $Y$ and a family $\mathcal W$ of partitions (called \emph{walls}) of $Y$ into two parts, the pair $(Y,\mathcal W)$ is called a \emph{space with walls} \cite{HP} if the following holds. For every two distinct points $x,y\in Y$ the number of walls separating $x$ from $y$ (called the \emph{wall pseudo-metric}), denoted by $\dw(x,y)$, is finite.

In this section, following the method of Wise \cite{W-qch} (see also \cite{W-ln}), we equip the $0$--skeleton of a $C'(\lambda)$--complex with the structure of space with walls. To be able to do it we have to make some assumptions on relators.

A \emph{wall} in a graph $\Gamma$ is a collection $w$ of edges such that removing all open edges of $w$ decomposes $\Gamma$ in
exactly two connected components. We call $\Gamma$ a \emph{graph with walls}, if every edge belongs to a unique wall. This is a temporary abuse of notations with respect to `walls' defined as above, which will be justified later. 

If not stated otherwise, we assume that for a $C'(1/24)$--complex $X$ associated to a graphical presentation as explained in Subsection~\ref{s:c'cpl}, with given relators $r_i$, each graph $r_i$ is  a graph with walls. 
In the current section and in the following Section~\ref{s:wlac}, using Lemma~\ref{l:remb}, we treat the relators $r_i$ as isometric subgraphs of $X$. This slight abuse of notation should not lead to confusion.
Following \cite[Section 5]{W-qch}, we define walls in $\xj$ as follows: Two edges are in the same wall if they are in the same wall in some relator $r_i$. This relation is then 
extended to an equivalence relation on the set of all edges of $X$. In particular, every edge is contained in a wall (possibly consisting of only that edge).

In general, the above definition may not result in walls for $X^{(0)}$. We require some further assumptions on
walls in relators, which are formulated below. 

\begin{definition}[($\beta,\Phi$)--separation]
\label{d:sep}
For $\beta \in (0,1/2]$ and a homeomorphism $\Phi \colon [0,+\infty)\to [0,+\infty)$, a graph $r$ with walls satisfies the \emph{($\beta,\Phi$)--separation property}
if the following two conditions hold:
\medskip

\noindent
\emph{\underline{$\beta$--condition}}: for every two edges $e,e'$ in $r$ belonging to the same wall we have
\begin{align*}
d(e,e')+1\geqslant \beta \, \gi\,r.
\end{align*}

\noindent
\emph{\underline{$\Phi$--condition}}: for every geodesic $\gamma$ in $r$, the number of edges in $\gamma$ whose walls 
have only one edge in common with $\gamma$ (and thus, in particular, separate the end-points of $\gamma$) is at least $\Phi(|\gamma|)$.
\medskip

A complex $X$ satisfies the \emph{\bds property} if every its relator does so.
\end{definition}

\begin{prop}[{\cite[Lemma 3.3]{AO}}]
\label{l:walls}
For every $\beta \in (0,1/2]$ there exists $\lambda\leqslant 1/24$, such that for every $C'(\lambda)$--complex $X$ satisfying the $\beta$--condition 
the following holds. Removing all open edges from a given wall decomposes $\xj$ into exactly two connected components.
The family of the corresponding partitions induced on $X^{(0)}$ defines the structure of a space with walls $(\xz,\mathcal W)$.
\end{prop}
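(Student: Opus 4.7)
My approach follows the dual-hypergraph construction of Wise, carried out here in the graphical $C'(\lambda)$-setting (compare \cite[Lemma 3.3]{AO}). Throughout I fix an arbitrary wall $W$ of $\xj$, that is, an equivalence class of edges under the relation generated by ``two edges lie in a common wall of some relator $r_i$''. The first step is to build a \emph{dual hypergraph} $\Lambda_W$: its vertex set is the collection of midpoints of edges in $W$, and for every relator $r_i$ with $W\cap r_i\ne\emptyset$ I add a cone-point $c_i$ in the interior of $r_i$ and join $c_i$ by an arc (inside $r_i$) to the midpoint of each edge of $W\cap r_i$. Since walls in $\xj$ were defined by transitively closing the relation on walls of the $r_i$, the set $W\cap r_i$ is a single wall of $r_i$, so each star lies cleanly inside one relator. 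Connectedness of $\Lambda_W$ is immediate from the transitive definition of walls.

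The heart of the argument is to show that $\Lambda_W$ contains no embedded cycle. Suppose for contradiction that there is one, of the form $c_{i_1}-e_1-c_{i_2}-e_2-\cdots-c_{i_k}-e_k-c_{i_1}$, with $e_j\in W\cap r_{i_j}\cap r_{i_{j+1}}$ (indices mod $k$). Inside each $r_{i_j}$, removing the wall edges of $W\cap r_{i_j}$ splits $r_{i_j}$ into two components; pick the component bounded between $e_{j-1}$ and $e_j$ consistently around the cycle. Gluing these half-relators together along the chosen edges produces a reduced spherical diagram $D$ over $X$ whose $2$-cells are precisely $r_{i_1},\ldots,r_{i_k}$, and whose boundary is trivial in $\xj$.

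Now I apply Greendlinger's lemma. Since $\lambda\leqslant 1/24<1/6$, some $2$-cell $r_{i_j}$ of $D$ has a boundary arc of length strictly greater than $(1-3\lambda)\,\gi r_{i_j}$ on $\partial D$; in particular its complementary arc, which joins $e_{j-1}$ to $e_j$ and is covered by at most two pieces, has length $<3\lambda\,\gi r_{i_j}$. Hence $d_{r_{i_j}}(e_{j-1},e_j)+1\leqslant 3\lambda\,\gi r_{i_j}$. But the $\beta$-condition of Definition~\ref{d:sep} forces $d_{r_{i_j}}(e_{j-1},e_j)+1\geqslant \beta\,\gi r_{i_j}$. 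Choosing $\lambda\leqslant\min(1/24,\beta/4)$ makes these bounds incompatible, and $\Lambda_W$ is therefore a tree.

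Once $\Lambda_W$ is known to be a tree, removing the interiors of all edges of $W$ from $\xj$ leaves exactly two connected components: I orient each wall-edge coherently using the tree $\Lambda_W$, and the two sides of $W$ in each relator $r_i$ extend consistently across all $r_i$'s meeting $W$ and across the complementary subgraph. This partitions $\xz$ into two sets, and doing this for every wall produces the family $\mathcal W$. Any two $0$-cells $u,v$ are joined by some finite edge-path in $\xj$, and each edge of the path belongs to a unique wall, so $\dw(u,v)$ is bounded by the path length. Therefore $(\xz,\mathcal W)$ is a space with walls. The principal obstacle is the tree step: it is precisely there that the combinatorial interplay between the $C'(\lambda)$-small cancellation geometry (Greendlinger) and the $\beta$-separation of wall edges inside each relator must be balanced, fixing the quantitative dependence of $\lambda$ on $\beta$.
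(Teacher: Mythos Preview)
The paper does not prove this proposition at all; it is quoted verbatim from \cite[Lemma~3.3]{AO} and used as a black box. So there is no ``paper's own proof'' to compare against here. Your strategy --- build the hypergraph, show it is a tree via a diagram argument balancing Greendlinger against the $\beta$--condition, then read off the two-sided partition --- is indeed the standard one, and it is the route taken in \cite{AO} and in Wise's work \cite{W-qch}.

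That said, your execution of the tree step has a real gap. You write that gluing ``half-relators'' around the hypergraph cycle yields a ``reduced spherical diagram $D$'' whose ``$2$-cells are precisely $r_{i_1},\ldots,r_{i_k}$'', and then you invoke Greendlinger on $\partial D$. This is incoherent on two counts. First, a spherical diagram has empty boundary, so the phrase ``boundary arc \ldots\ on $\partial D$'' makes no sense; you presumably mean a disc diagram. Second, and more seriously, in the \emph{graphical} $C'(\lambda)$ setting the relators $r_i$ are finite graphs, not discs: the two components of $r_{i_j}$ minus its wall are subgraphs, and gluing subgraphs along single edges does not produce a $2$-complex to which van Kampen/Greendlinger applies. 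Faces of a van Kampen diagram over $X$ are labelled by \emph{closed paths} in the $r_i$, not by the $r_i$ themselves. The correct construction (as in \cite{AO}) is: from a minimal embedded cycle in the hypergraph, extract a closed edge-path in $\xj$ that alternates between wall edges $e_j$ and geodesics in $r_{i_j}$ from $e_{j-1}$ to $e_j$; fill this null-homotopic loop by a reduced disc diagram using simple connectedness of $X$; then the collar structure and $C'(\lambda)$ force a shell whose outer path lies in a single $r_{i_j}$ and joins $e_{j-1}$ to $e_j$ through at most two pieces with its neighbours, contradicting $\beta$. Your assertion that the complementary arc ``is covered by at most two pieces'' is exactly the point where the collar/minimality argument does work, but you have not set up the diagram in a way that supports it.
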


In what follows we assume that a $C'(\lambda)$--complex $X$ is as in the proposition.
We recall further results on $(\xz,\mathcal W)$ that will be extensively used in Section~\ref{s:wlac}.

For a wall $w$, its \emph{hypergraph} $\Gamma_w$ is a graph defined as follows (see \cite[Definition 5.18]{W-qch} and \cite{W-sc}). 
There are two types of vertices in $\Gamma_w$ (see e.g.\ Figure~\ref{f:C}):
\begin{itemize}
\item
\emph{edge-vertices} correspond to edges in $w$,
\item
\emph{relator-vertices} correspond to relators containing edges in $w$.  
\end{itemize}
An \emph{edge} in $\Gamma_w$ connects an edge-vertex to a relator-vertex whenever the corresponding
relator contains the given edge.

The \emph{hypercarrier} of a wall $w$ is the $1$--skeleton of the subcomplex of $X$ consisting of all relators containing edges in $w$ or of a single edge $e$ if $w=\{ e \}$.
The following theorem recalls the most important facts concerning walls; see \cite[Subection 3.3]{AO}.
\begin{theorem}  \label{l:tree}
Each hypergraph is a tree.
Relators and hypercarriers are convex subcomplexes of $\xj$.
\end{theorem}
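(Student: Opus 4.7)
The three assertions split naturally into (i) acyclicity of each hypergraph $\Gamma_w$ (connectedness being immediate), (ii) convexity of relators, and (iii) convexity of hypercarriers. The common machinery is the minimal disc diagram argument from graphical $C'(\lambda)$ small cancellation: any closed loop in $\xj$ bounds a diagram $D$ in $X$, and one may pass to a reduced minimal diagram in which Greendlinger's Lemma (in its graphical form, valid once $\lambda$ is chosen small enough) produces a relator $r_i \hookrightarrow D$ whose boundary meets $\partial D$ in an arc of length $> (1-3\lambda)\,\gi r_i$. Throughout I would fix $\lambda$ so small that $\lambda < \beta/3$ (or similar), so that pieces — whose diameter is less than $\lambda\,\gi r_i$ — cannot accommodate a wall traverse, whose length is at least $\beta\, \gi r_i$ by the $\beta$--condition.

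\textbf{Acyclicity of $\Gamma_w$.} Connectedness of $\Gamma_w$ is built into the transitive closure definition of walls. Suppose instead $\Gamma_w$ contained an embedded cycle, meeting relator-vertices $r_{i_0},r_{i_1},\ldots,r_{i_k}=r_{i_0}$ and edge-vertices $e_1,\ldots,e_k$ with $e_j \subseteq r_{i_{j-1}}\cap r_{i_j}$. In each $r_{i_j}$ choose a geodesic segment $\sigma_j$ joining $e_j$ to $e_{j+1}$ within the hypercarrier; since $e_j$ and $e_{j+1}$ lie in a common wall of $r_{i_j}$, the $\beta$--condition forces $|\sigma_j|\geqslant \beta\, \gi r_{i_j}-1$. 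Concatenation yields a closed loop $\gamma$ in $\xj$. A reduced diagram $D$ for $\gamma$ is non-degenerate (since the cycle was embedded in $\Gamma_w$, no two consecutive $r_{i_j}$ coincide as boundary $2$-cells), so Greendlinger furnishes a $2$-cell $r$ of $D$ whose intersection with $\partial D$ is an arc longer than $(1-3\lambda)\gi r$; but $\partial D$ is the union of $\sigma_j$'s, so this arc must lie inside one of the $\sigma_j \subseteq r_{i_j}$. Then $r\cap r_{i_j}$ is a piece of diameter $\geqslant (1-3\lambda)\gi r$, contradicting the $C'(\lambda)$ condition.

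\textbf{Convexity of relators and hypercarriers.} For a relator $r$, let $\gamma$ be a geodesic of $\xj$ joining two vertices $x,y \in r$ and suppose $\gamma \not\subseteq r$. Take a (not necessarily geodesic) path $\alpha\subseteq r$ from $x$ to $y$; then $\gamma\cdot\alpha^{-1}$ bounds a reduced minimal diagram $D$. After collapsing duplicate boundary occurrences of $r$, Greendlinger's Lemma provides a boundary $2$-cell $r'\hookrightarrow D$ with $\partial r'\cap\partial D$ containing an arc $\tau$ of length $> (1-3\lambda)\gi r'$. If $\tau\subseteq \alpha$, then $r'$ and $r$ share a piece of the same length, which forces $r'=r$ by $C'(\lambda)$, contradicting reducedness. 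Hence $\tau$ meets $\gamma$ in a subarc longer than half of $\partial r'$, so the complementary arc of $\partial r'$ provides a strictly shorter replacement for that subarc of $\gamma$, contradicting the assumption that $\gamma$ is a geodesic. For hypercarriers, the same argument runs verbatim once one notes that a path may cross freely between relators sharing a wall edge without increasing length; the Greendlinger $2$-cell $r'$ is either one of the relators spanning the hypercarrier — impossible by reducedness — or yields the same geodesic-shortening contradiction.

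\textbf{Main obstacle.} The delicate point is calibrating $\lambda$, $\beta$, and the Greendlinger constant so that the piece estimate $\lambda\,\gi r$ is strictly dominated by the wall-traversal length $\beta\,\gi r$ and by the Greendlinger boundary arc $(1-3\lambda)\gi r$ simultaneously. This is exactly the place where the choice of $\lambda\leqslant 1/24$ in Proposition~\ref{l:walls} is needed, and where the graphical (as opposed to classical) small cancellation version of Greendlinger must be invoked: in the graphical setting the ``$2$-cells'' are the relator graphs themselves, and one must verify that the boundary arc obtained does not wrap around the graph in a way that evades the piece estimate, which is ensured by the $C'(\lambda)$ condition being formulated in terms of pieces of the relator graphs rather than of their boundary cycles.
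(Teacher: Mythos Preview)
The paper does not prove this theorem in-text; it is quoted from \cite[Subsection~3.3]{AO}, where the arguments are precisely the disc-diagram-plus-Greendlinger ones you outline (going back to Wise). So your overall approach matches the intended one.

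There is, however, a genuine gap in the acyclicity argument. You assert that the Greendlinger outer arc $\tau$ ``must lie inside one of the $\sigma_j\subseteq r_{i_j}$''. This is not justified: $\tau$ is only a connected subarc of $\partial D=\sigma_0\ast\sigma_1\ast\cdots$, and nothing prevents it from crossing one or more of the junction edges $e_j$ and hence overlapping several consecutive $\sigma_j$'s. If $\tau$ meets, say, both $\sigma_{j-1}$ and $\sigma_j$, then the Greendlinger cell $r$ shares subpaths with both $r_{i_{j-1}}$ and $r_{i_j}$, and each overlap must be analysed separately. Worse, $r$ may coincide as a relator with one of the $r_{i_j}$ on the boundary; then $\tau\cap\sigma_j$ is \emph{not} a piece at all, and your ``piece of diameter $\geqslant(1-3\lambda)\gi r$'' contradiction disappears. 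The standard repair (as in \cite{AO} and Wise) is a case split: if $r\neq r_{i_j}$ for every $j$ that $\tau$ meets, sum the piece bounds over the (at most three, for suitable $\lambda$) segments; if $r=r_{i_j}$ for some $j$, then the long outer arc sits inside $r_{i_j}$ and only the short flanking portions in $\sigma_{j\pm1}$ are pieces, which forces $e_j$ and $e_{j+1}$ to lie in $r=r_{i_j}$ already --- so the hypergraph cycle can be shortened, contradicting minimality of the chosen cycle. Without this case analysis the argument is incomplete.

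The same omission recurs in your hypercarrier paragraph: ``the same argument runs verbatim'' skips the case where the Greendlinger cell equals one of the relators composing the hypercarrier, and here the repair genuinely uses the tree (ladder) structure of $\Gamma_w$ just established, together with convexity of the individual relators, rather than a bare repetition of the relator argument.
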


Observe that if edges $e,e'$ are in the same relator $r$ and, moreover, they belong to the same wall in $\xj$ then
$e,e'$ belong to the same wall in $r$ (for the initial wallspace structure on $r$). 

\section{Proper lacunary walling}
\label{s:wlac}
In this section we introduce the condition of proper lacunary walling (see Definition~\ref{d:cond}), and we show that for complexes satisfying this condition
the wall pseudo-metric is proper; see Theorem 3 in Introduction and Theorem~\ref{p:lsp} below.
We follow the notation from Section~\ref{s:c'cpl} and Section~\ref{s:wall}. The section is based on  \cite[Section 4]{AO}. Note however that whereas the proper lacunary walling condition from the current paper is weaker than the corresponding lacunary walling condition from \cite{AO}, consequences of the former are also weaker: We obtain properness of the wall pseudo-metric, and in \cite{AO} a linear separation property is established. Unfortunately, we are not able to use the lacunary walling condition from \cite{AO}
to construct corresponding groups (and we believe it may be not possible). Therefore, for the sake of the constructions in this article we introduced the proper lacunary walling conditions studied further in this section. Note also that the notions used here may be sometimes quite different from the ones used in \cite{AO},
hence we have to provide new proofs of corresponding results.

\medskip

For a relator $r$ and a vertex $v\in r$, let $P_v(r)$ denote the number of edges in $\bigcup_{r'}r\cap r'$, where
$r'$ varies through all relators $r'\neq r$ containing $v$. Let $P(r)$ denote the maximal number among $P_v(r)$
for $v\in r$.

\begin{definition}[{Proper lacunary walling}]
\label{d:cond}
Let $\beta \in (0,1/2]$, and let $D$ be a natural number larger than $1$. 
Let  $0<\lambda< \beta/2$ be as in Proposition~\ref{l:walls} (that is, such that $(X^{(0)},\mathcal W)$ is a space with walls). 
Let $\Phi,\Omega,  \Delta \colon [0,+\infty) \to [0,+\infty)$ be homeomorphisms.
We say that $X$ satisfies the \emph{proper lacunary walling condition} if:
 
\begin{itemize}
\item
$X^{(1)}$ has degree bounded by $D$;
\item
(Small cancellation) $X$ satisfies the $C'({\lambda})$--condition;
\item
(Separation) $X$ satisfies the \bds property;
\item
(Lacunarity) $\Phi((\beta - \lambda)\, \gi  r_i)-4\, P(r_i) \geqslant \Omega(\gi r_i) $;
\item
(Large girth) $\gi r_i \geqslant \Delta(\di r_i)$. 
\end{itemize}  
\end{definition}

\begin{center}
	{\bf For the rest of this section we assume that the complex $X$ satisfies the proper lacunary walling condition from Definition~\ref{d:cond} with parameters $\beta,D,\lambda,\Phi,\Omega,  \Delta$.}
\end{center}

It is clear that $\dw(p,q)\leqslant d(p,q)$. The rest of this section is devoted to bounding the wall pseudo-metric $\dw$ from below. 
Let $\gamma$ be a geodesic in $X$ (that is, in its $1$--skeleton $\xj$) with endpoints $p,q$. Let $A(\gamma)$ denote the set of edges in $\gamma$ whose walls meet $\gamma$ in only one edge (in particular such walls separate $p$ from $q$). Clearly $\dw(p,q)\geqslant |\ag|$.
We thus estimate $\dw(p,q)$ by closely studying the set $\ag$. The estimate is first provided locally (in Subsection~\ref{s:local} below) and then we use the local bounds to obtain a global one. In what follows, by $E(Y)$ we denote the set of edges of a subcomplex $Y\subseteq X$.
\medskip

We begin with an auxiliary lemma.
Let $r$ be a relator. Since, by Theorem~\ref{l:tree}, $r$ is convex in $X$, its intersection with $\gamma$ is an interval $p'q'$, with $p'$ lying closer
to $p$; see Figure~\ref{f:C}. 
\begin{figure}[h!]
\centering
\includegraphics[width=0.9\textwidth]{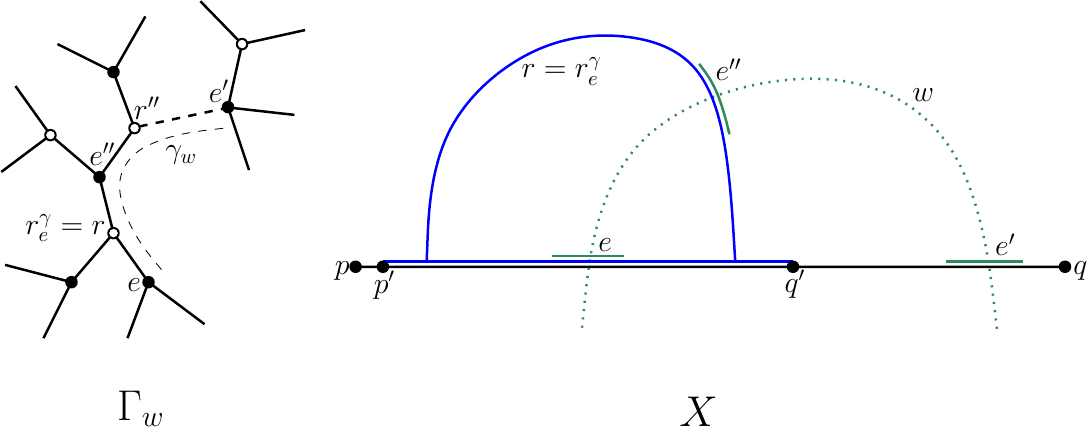}
\caption{Lemma~\ref{l:C}.}
\label{f:C}
\end{figure}
Consider the set $C$ of edges $e$ in $p'q'$, whose walls $w$ meet $\gamma$ at least twice and, moreover, have the following properties. Let $e'\in w$ (considered as an edge-vertex in the hypergraph $\Gamma_w$ of the wall $w$) be a closest vertex to $e$ in $\Gamma_w$, among edges of $w$ lying on $\gamma$. In the hypergraph $\Gamma_w$ of the wall $w$, which is a tree by Theorem~\ref{l:tree}, consider the unique geodesic $\gamma_w$ between vertices $e$ and $e'$. We assume that there are at least two distinct relator-vertices on $\gamma_w$, one of them being~$r$.

\begin{lemma}
\label{l:C}
In the situation as above we have $|C|\leqslant 2 P(r)$.
\end{lemma}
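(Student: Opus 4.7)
For each $e \in C$, the geodesic $\gamma_w$ has $r$ as an interior relator-vertex; let $e^+(e)$ denote the edge-vertex of $\gamma_w$ adjacent to $r$ on the side of $e'$, and $r^*(e)$ the following relator-vertex. By construction $e^+(e)$ is an edge of $\xj$ lying in both $r$ and $r^*(e) \neq r$, hence in a piece of $r$. Moreover, $e^+(e)$ cannot lie on $\gamma$: if it did, it would be an edge of $w$ on $\gamma$ strictly closer to $e$ in $\Gamma_w$ than $e'$, contradicting the defining property of $e'$. The assignment $e \mapsto e^+(e)$ is also injective on $C$: if $e^+(e_1) = e^+(e_2)$, then since each edge of $\xj$ lies in a unique wall of $(\xz,\wa)$, necessarily $w_{e_1} = w_{e_2} =: w$; in $\Gamma_w$ the vertices $e_1, e_2$ are then both neighbors of $r$, so $d_{\Gamma_w}(e_1, e_2) = 2$; but the membership $e_1 \in C$ forces the distance from $e_1$ to the closest edge of $w$ on $\gamma$ to be $\geqslant 4$ (there are at least two relator-vertices on $\gamma_w$), contradicting that $e_2$ is such a candidate at distance $2$. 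Hence $e_1 = e_2$.

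Next I would partition $C = C^- \sqcup C^+$ according to whether $e'$ lies on $\gamma$ before $p'$ or beyond $q'$ — exhaustive, since the two-relator hypothesis rules out $e' \in r$. The key step is to show that for every $e \in C^+$ the edge $e^+(e)$ is contained in the single piece $r \cap r_+$, where $r_+$ denotes the relator of $X$ containing the edge of $\gamma$ immediately after $q'$; and symmetrically $e^+(e) \in r \cap r_-$ for $e \in C^-$. Granting this, the injectivity of $e \mapsto e^+(e)$ together with $|r \cap r_\pm| \leqslant P(r)$ gives $|C| = |C^+| + |C^-| \leqslant 2 P(r)$ at once.

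The main obstacle is precisely the two-piece claim. I plan to establish it via convexity: the hypercarrier of $w$ and every relator of $X$ are convex in $\xj$ (Theorem~\ref{l:tree} and Lemma~\ref{l:remb}), so the subsegment of $\gamma$ from $q'$ to $e'$ must lie inside the hypercarrier of $w$. Combined with the uniqueness of geodesics in the tree $\Gamma_w$, this should force $\gamma_w$ to exit $r$ through an edge of the piece $r \cap r_+$, since any other exit would provide an alternative route in the hypercarrier incompatible with $\gamma$ being a geodesic of $\xj$; the symmetric argument handles $C^-$.
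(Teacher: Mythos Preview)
Your overall architecture---define $e\mapsto e^+(e)$, prove it injective, split $C=C^-\sqcup C^+$, and bound each half by $P(r)$---matches what the paper (via \cite[Lemma~4.2]{AO}) does, and your injectivity and ``$e^+(e)\notin\gamma$'' arguments are correct. The genuine gap is the two-piece claim. As written, $r_+$ is not well-defined: an edge of $\xj$ may lie in many relators, so ``the relator of $X$ containing the edge of $\gamma$ immediately after $q'$'' does not pick out a single object. More seriously, even once you fix some relator $r_+$ containing that edge, there is no reason for $e^+(e)$ to lie in $r_+$. What you actually know is $e^+(e)\in r\cap r^*(e)$, and $r^*(e)$ is determined by the wall $w_e$, hence varies with $e$; your convexity sketch gives that the subsegment of $\gamma$ from $q'$ to $e'$ lies in the hypercarrier of $w_e$, but that hypercarrier also varies with $e$, so you cannot conclude that all the $r^*(e)$ coincide with one fixed $r_+$.

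What the convexity argument does buy you (and what the proof in \cite{AO} uses) is that the geodesic $\gamma$, once it leaves $r$ at $q'$, must immediately enter $r^*(e)$: one shows, using the tree structure of $\Gamma_{w_e}$ together with convexity of relators and of the hypercarrier, that the relators along the $\Gamma_{w_e}$--geodesic from $e$ to $e'$ are traversed by $\gamma$ in the same order. Hence $q'\in r\cap r^*(e)$ for every $e\in C^+$, so each $e^+(e)$ lies in a piece of $r$ that also contains $q'$. This is weaker than your claim (the pieces $r\cap r^*(e)$ need not all be equal), so you still owe an argument for why the collection $\{e^+(e):e\in C^+\}$ has at most $P(r)$ elements. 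Your sentence ``any other exit would provide an alternative route in the hypercarrier incompatible with $\gamma$ being a geodesic of $\xj$'' gestures at the right mechanism but does not prove it; you need to make precise which relator-vertex of $\Gamma_{w_e}$ the geodesic $\gamma$ enters upon leaving $r$, and why that forces the edge $e^+(e)$ into a set of controlled size.
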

\begin{proof}
The proof is basically the same as the one of \cite[Lemma~4.2]{AO}. Since we need to express the statement in a slightly different way
we recall the proof for completeness.
Suppose that $q'$ lies between $e$ and $e'$ (on $\gamma$). 
Let $e''\neq e$ be the edge-vertex on $\gamma_w$ adjacent to $r$ and, subsequently, let $r''\neq r$ be the relator-vertex on $\gamma_w$ 
adjacent to $e''$ --- see Figure~\ref{f:C}.
By convexity and the tree-like structure of the hypercarrier of $w$ containing $e$ and $e'$ (see Theorem~\ref{l:tree}) we have that $q'\in r''$. Since $r\cap r''$ contains both
$e''$ and $q'$, we have that the number of edges $e''$ as above is at most $P(r)$.
The same number bounds the quantity of the corresponding walls.
By our assumptions, every such wall contains only one edge in $p'q'$.
Thus, the number of edges $e$ as above is at most $P(r)$.
Taking into account the situation when $p'$ lies between $e$ and $e'$ we have $|C|\leqslant 2 P(r)$.
\end{proof}


\subsection{Local estimate on \texorpdfstring{$|\ag|$}{|A(gamma)|}.}
\label{s:local}
For a local estimate we need to define neighborhoods $N_e^{\gamma}$ -- \emph{relator neighborhoods in $\gamma$} -- one for every edge $e$ in $\gamma$, for which the number $|E(N_e^{\gamma})\cap \ag|$ of edges can be bounded from below.

For a given edge $e$ of $\gamma$ we define a corresponding relator neighborhood $N_e^{\gamma}$ as follows.
If $e\in \ag$ then $N_e^{\gamma}=\{ e \}$. Otherwise, we proceed in the way described below.
\medskip

Since $e$ is not in $\ag$, its wall $w$ crosses $\gamma$ in at least one more edge. In the wall $w$, choose an edge $e'\subseteq \gamma$ being  a closest edge-vertex to $e\neq e'$ in the hypergraph $\Gamma_w$ of the wall $w$. 
We consider separately the two following cases.
\medskip 

\noindent
{\bf Case I: The edges $e$ and $e'$ do not lie in a common relator.}
In the hypergraph $\Gamma_w$ of the wall $w$, which is a tree by Theorem~\ref{l:tree}, consider the geodesic $\gamma_w$ between vertices $e$ and $e'$. Let $r_e^{\gamma}$ be the relator-vertex in $\gamma_w$ adjacent to $e$. Let $e''$ be an edge-vertex in $\gamma_w$ adjacent to $r_e^{\gamma}$. Consequently, let $r''$ be the other relator-vertex in $\gamma_w$ adjacent to $e''$.
The intersection of $r_e^{\gamma}$ with $\gamma$ is an interval $p'q'$.
Assume without loss of generality, that $q'$ lies between $e$ and $e'$;  see Figure~\ref{f:C}. 
\medskip

We define the relator neighborhood $N_e^{\gamma}$ as the interval $p'q'=r_e^{\gamma}\cap \gamma$.
The following lemma is the same as \cite[Lemma 4.3]{AO}.
\begin{lemma}
	\label{l:Ne}
	\begin{align*}
	|E(N_e^{\gamma})|> (\beta - \lambda)\, \gi\,r_e^{\gamma}.
	\end{align*}  
\end{lemma}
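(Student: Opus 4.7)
The plan is to combine the $\beta$-separation condition on $r$ with the $C'(\lambda)$-condition to bound $|p'q'|$ from below. By convexity of $r$ in $\xj$ (Theorem~\ref{l:tree}), $p'q'=r\cap\gamma$ is a geodesic in $r$ containing the edge $e$, so it suffices to produce a lower bound on $d_r(e,q')$ and hence on $|p'q'|$.

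The first ingredient comes from the $\beta$-condition applied to $r$. The edges $e$ and $e''$ both lie in $r$ and both belong to the $X$-wall $w$; since $X$-walls are obtained by transitively closing walls of relators, the restriction $w\cap E(r)$ is a single wall $w_r$ of $r$, so $e,e''\in w_r$. Because $\Gamma_w$ is a tree (Theorem~\ref{l:tree}) and in Case I we have $d_{\Gamma_w}(e,e')\geq 4$, the edge $e''$ cannot lie on $\gamma$: otherwise $d_{\Gamma_w}(e,e'')=2<d_{\Gamma_w}(e,e')$, contradicting the choice of $e'$ as the closest wall-edge to $e$ among those on~$\gamma$. The $\beta$-condition in $r$ thus gives
\[
d_r(e,e'')\;\geq\;\beta\,\gi r -1.
\]

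The second ingredient is the small-cancellation estimate $d_r(e'',q')<\lambda\,\gi r$. Since $e,e'\in\gamma$ and $\gamma$ is a geodesic, the portion $\gamma(e,e')$ is itself a geodesic in $X$ with endpoints in the hypercarrier of $w$, which is convex (Theorem~\ref{l:tree}); hence $\gamma(e,e')$ stays inside the hypercarrier. Consequently, the relator $r^\ast$ that $\gamma$ enters immediately after leaving $r$ at $q'$ is a relator-vertex of $\Gamma_w$ lying on the unique hypergraph-geodesic from $r$ toward $e'$, so $r^\ast=r''$ (or is forced to contain $e''$ by the tree structure). Thus $e''\in r\cap r''$, a piece of $r$, and $q'\in r\cap r''$ as well, so $d_r(q',e'')$ is at most the diameter of this piece, which is strictly less than $\lambda\,\gi r$ by the $C'(\lambda)$-condition. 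Combining the two bounds via the triangle inequality in $r$,
\[
|p'q'|\;\geq\; d_r(e,q')\;\geq\; d_r(e,e'')-d_r(e'',q')\;>\;(\beta-\lambda)\,\gi r - 1,
\]
from which the claimed strict inequality $|E(N_e)|>(\beta-\lambda)\,\gi r$ follows after an integrality tightening.

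The main obstacle is the small-cancellation step: converting the combinatorial information that $e''$ is the unique hypergraph-neighbour of $e$ on $\gamma_w$ into the metric statement $d_r(e'',q')<\lambda\,\gi r$. This is where one genuinely needs convexity of the hypercarrier (to force the next relator traversed by $\gamma$ to live in $\Gamma_w$) together with the small-cancellation hypothesis to control the diameter of the relevant piece of $r$; the rest of the argument is triangle-inequality bookkeeping.
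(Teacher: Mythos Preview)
Your overall strategy—bound $d(e,e'')$ from below via the $\beta$--condition, bound $d(q',e'')$ from above by a piece estimate, and combine by the triangle inequality—is the natural one and almost certainly matches \cite[Lemma~4.3]{AO}, to which the paper defers without reproducing a proof. However, your second ingredient has a genuine gap. You assert that the relator $r^{\ast}$ entered by $\gamma$ immediately after $q'$ equals $r''$ (or at least contains $e''$), but hypercarrier convexity together with the bare tree structure of $\Gamma_w$ does not force this. Convexity tells you only that $r^{\ast}$ is \emph{some} relator appearing in $\Gamma_w$; nothing you have written rules out $r^{\ast}$ lying on a branch of $\Gamma_w$ at $r$ different from the one leading toward $e'$, nor does it guarantee that $r\cap r^{\ast}$ contains any edge of $w$. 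The sentence ``is forced to contain $e''$ by the tree structure'' is precisely the step that needs justification and does not receive it. To conclude that $q'$ and $e''$ lie in a common piece one needs additional structural control on how relators of a hypercarrier can meet in $X$; this is established in \cite{AO}, but it is not something you can read off from Theorem~\ref{l:tree} as stated here.

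A secondary issue: the ``integrality tightening'' at the end is invalid, since $(\beta-\lambda)\,\gi r$ need not be an integer, so $|p'q'|>(\beta-\lambda)\,\gi r-1$ does not by itself yield $|p'q'|>(\beta-\lambda)\,\gi r$. The clean fix is to use that the edge $e$ itself lies in $N_e$: this gives $|E(N_e)|\geq d(e,q')+1$, and combining with $d(e,q')>(\beta-\lambda)\,\gi r-1$ yields the strict inequality directly, with no appeal to integrality.
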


\medskip 

\noindent
{\bf Case II: The edges \texorpdfstring{$e$}{e} and \texorpdfstring{$e'$}{e'} lie in a common relator \texorpdfstring{$r_e^{\gamma}$}{r}.}
We may assume (exchanging $e'$ if necessary) that $e'$ is closest to $e$ (in $X$) among edges in $w$ lying in $r_e^{\gamma} \cap \gamma$.
\medskip

The relator neighborhood $N_e^{\gamma}$ is now defined as the interval $p'q'=r_e^{\gamma}\cap \gamma$.
By the $\beta$--condition of the $(\beta,\Phi)$--separation property, we have
\begin{align}
\label{e:II1}
|E(N_e^{\gamma})|\geqslant \beta \, \gi  r_e^{\gamma}.
\end{align}

\medskip

In the following two lemmas we estimate the local density of $A(\gamma)$ separately in the two cases.
The lemmas correspond to, respectively, Lemma 4.4 and Lemma 4.5 from \cite{AO}.

\begin{lemma}[Local density of $\ag$ -- Case I]
  \label{l:prop2}
The number of edges in $N_e^{\gamma}$, whose walls separate $p$ from $q$ is estimated as follows:
\begin{align*}
|E(N_e^{\gamma})\cap \ag| \geqslant \Phi((\beta - \lambda)\, \gi r_e^{\gamma})-4\, P(r_e^{\gamma}).
\end{align*}
\end{lemma}
\begin{proof}
  To estimate $|E(N_e^{\gamma})\cap \ag|$ we consider first a set $B$ of edges in $N_e^{\gamma}$ defined in the following way. An edge $f$ belongs to $B$ if its wall $w_f$ has only one edge in common with $N_e^{\gamma}$. In particular, $w_f$ 
separates $p'$ from $q'$.

By the $\Phi$--condition from Definition~\ref{d:sep}, and by Lemma~\ref{l:Ne}, we have
\begin{align}
\label{e:B}
|B|\geqslant \Phi(|E(N_{e})|) \geqslant \Phi((\beta - \lambda)\, \gi  r_e^{\gamma}).
\end{align}

We estimate further the number of edges in $\ag \cap B$. To do this we explore the set of edges $f$ in $B$ 
outside $\ag$.
We consider separately the two ways in which an edge $f$  of $B$ may fail to belong to $\ag$ -- these are studied in Cases: C and D below.

Since $f\in B\setminus \ag $, there exists another edge of the same wall $w_f$ in $\gamma$ outside $r_e^{\gamma}$. Let $f'$ be a closest to $f$ such edge-vertex in
the hypergraph $\Gamma_{w_f}$. Denote by $\gamma_{w_f}$ the geodesic in $\Gamma_{w_f}$ between $f$ and $f'$.
Let $r_f$ be the relator-vertex on $\gamma_{w_f}$ adjacent to $f$.

\begin{figure}[h!]
\centering
\includegraphics[width=0.9\textwidth]{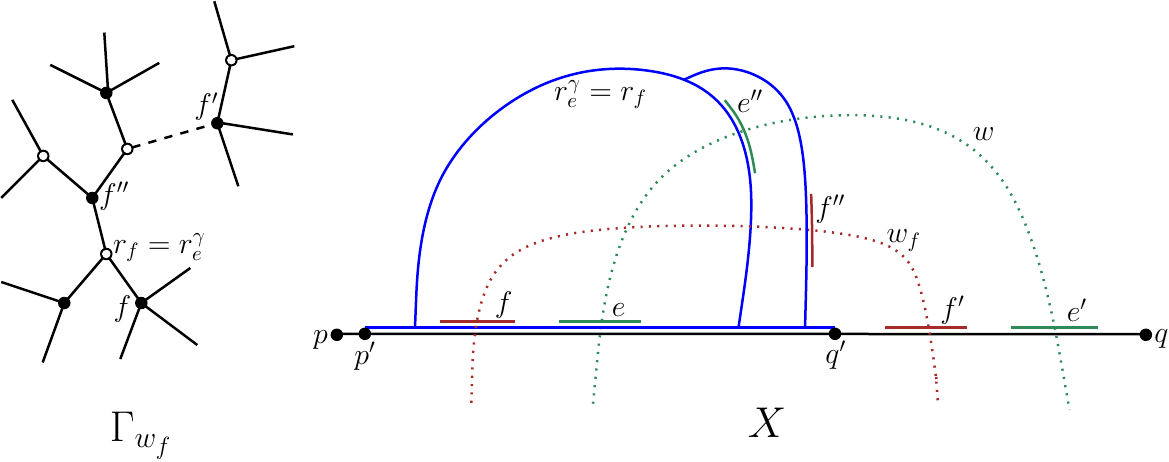}
\caption{Lemma~\ref{l:prop2}, Case I(C).}
\label{f:CC}
\end{figure}

\medskip

\noindent
\emph{(Case C): $r_f=r_e^{\gamma}$.} Observe that then there are at least two distinct relator-vertices between $f$ and $f'$ on $\gamma_{w_f}$; see Figure~\ref{f:CC}.
The cardinality of the set $C$ of such edges $f$ is bounded, by Lemma~\ref{l:C}, as follows:
\begin{align}
\label{e:205}
|C|\leqslant 2\,P(r_e^{\gamma}).
\end{align}

\medskip

\noindent
\emph{(Case D): $r_f\neq r_e^{\gamma}$.}
Let the set of such edges $f$ be denoted by $D$. 
Let $r_f\cap \gamma = p''q''$. 
We claim that $p'\in p''q''$ or $q'\in p''q''$. 
Therefore
\begin{align}
\label{e:290}
|D|\leqslant 2\, P(r_e^{\gamma}).
\end{align}  

To show the claim we proceed by contradiction. Suppose not -- then $p''q''\subseteq p'q'$.
By Lemma~\ref{l:Ne} we have then (treating $r_f$ as $r_f^{\gamma}$)
$|p''q''|> (\beta - \lambda)\, \gi r_f$. However, by our choice of $\beta$, this contradicts the small cancellation condition.

Now we combine the cases C, and D, to obtain the following bound in Case I, see estimates (\ref{e:B}), (\ref{e:205}), and (\ref{e:290}) above.
\begin{align*}
|E(N_e^{\gamma})\cap \ag| &\geqslant |B \cap A(\gamma)|\geqslant  |B|-|C|-|D|\\ & \geqslant 
\Phi((\beta - \lambda)\, \gi r_e^{\gamma})-4\, P(r_e^{\gamma}).
\end{align*}  
\end{proof}

\begin{lemma}[Local density of $\ag$ -- Case II]
  \label{l:prop3}
The number of edges in $N_e^{\gamma}$, whose walls separate $p$ from $q$ is estimated as follows:
\begin{align*}
|E(N_e^{\gamma})\cap \ag| \geqslant \Phi(\beta\, \gi r_e^{\gamma})-4\, P(r_e^{\gamma}).
\end{align*}
\end{lemma}
\begin{proof}
Again, let $B$ be the set of edges $f$ in $N_e^{\gamma}$ such that their wall $w_f$ intersects $N_e^{\gamma}$ in exactly one edge. Then $w_f$ separates $p'$ and $q'$.
As in Case I (see (\ref{e:B})), by (\ref{e:II1}), we have the following lower bound:
\begin{align*}
|B|\geqslant \Phi(|E(N_{e})|) \geqslant \Phi(\beta \, \gi r_e^{\gamma}).
\end{align*}

We estimate again the number of edges $f$ in $B\setminus \ag$. As in Case I (Lemma~\ref{l:prop2}), we consider separately two possibilities: C, D
for such an edge $f$ to fail belonging to $\ag$.
The same considerations as in Case I lead to the estimates:
\begin{align*}
|C|&\leqslant 2\, P(r_e^{\gamma}),\\
|D|&\leqslant 2 \, P(r_e^{\gamma}).
\end{align*}
Combining all the inequalities above we get
\begin{align*}
|E(N_e^{\gamma})\cap \ag| &\geqslant |B \cap A(\gamma)|\geqslant  |B|-|C|-|D|\\ & \geqslant 
\Phi(\beta\, \gi r_e^{\gamma})-4\, P(r_e^{\gamma}).
\end{align*}\end{proof}

We are ready to combine all the previous estimates to obtain the final local estimate.

\begin{lemma}[Local density of $\ag$]
  \label{l:local}
For $e\notin \ag$, the number of edges in $N_e^{\gamma}$ whose walls separate $p$ from $q$ is estimated as follows:
\begin{align*}
|E(N_e^{\gamma})\cap \ag| \geqslant \Omega (\gi r_e^{\gamma}).
\end{align*}
\end{lemma}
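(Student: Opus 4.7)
The plan is to combine the two local estimates in Lemma~\ref{l:prop2} (Case I) and Lemma~\ref{l:prop3} (Case II), and then invoke the Lacunarity hypothesis from Definition~\ref{d:cond} to package both into the uniform lower bound $\Omega(\gi r_e)$. These two cases (together with the trivial situation $e\in A(\gamma)$, where $N_e=\{e\}$ and the claim is immediate if we take $r_e$ to be any relator through $e$) exhaust all configurations of the nearest same-wall partner $e'$ relative to $e$, so no additional casework is needed.

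In Case I, Lemma~\ref{l:prop2} gives
\[
|E(N_e)\cap A(\gamma)|\geqslant \Phi\bigl((\beta-\lambda)\gi r_e\bigr)-6P(r_e),
\]
and the Lacunarity condition asserts precisely that the right-hand side is at least $\Omega(\gi r_e)$. This closes Case I without further work.

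In Case II, Lemma~\ref{l:prop3} yields
\[
|E(N_e)\cap A(\gamma)|\geqslant \Phi(\beta\,\gi r_e)-6P(r_e).
\]
Here the one extra observation is that $\Phi$, being a homeomorphism of $[0,+\infty)$ onto itself, is strictly increasing; combined with $\lambda<\beta/2<\beta$ this gives $\beta\,\gi r_e\geqslant (\beta-\lambda)\,\gi r_e$ and hence $\Phi(\beta\,\gi r_e)\geqslant \Phi\bigl((\beta-\lambda)\gi r_e\bigr)$. The Case II bound therefore dominates the Case I bound, and the Lacunarity condition again delivers $\Omega(\gi r_e)$.

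I do not anticipate a real obstacle: the genuine combinatorial work was already done in the two preceding local lemmas, and the present statement is simply the clean repackaging of their conclusions under the Lacunarity hypothesis. The only minor care point is the monotonicity of $\Phi$ (immediate from its being a self-homeomorphism of $[0,+\infty)$) and correctly identifying $r_e$ as the relator provided by the construction of $N_e$ in the relevant case.
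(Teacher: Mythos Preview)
Your proof is correct and follows essentially the same approach as the paper's: handle the trivial case $e\in A(\gamma)$, then apply Lemmas~\ref{l:prop2} and~\ref{l:prop3} together with the Lacunarity hypothesis. The only addition is that you make explicit the monotonicity of $\Phi$ needed to subsume the Case~II bound under the Lacunarity condition, which the paper leaves implicit.
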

\begin{proof}
We use the lacunarity condition from Definition~\ref{d:cond}, and Lemma~\ref{l:prop2} or Lemma~\ref{l:prop3}.
\end{proof}


\subsection{Properness of the wall pseudo-metric}
\label{s:global}
Using the local estimate on the density of $\ag$ from Lemma~\ref{l:local}, we now estimate the overall density of edges with walls separating $p$ and $q$, thus obtaining the properness of the wall pseudo-metric $\dw$.

\begin{theorem}[Properness]
  \label{p:lsp}
There exists a homeomorphism $\Psi \colon [0,+\infty) \to [0,+\infty)$ such that 
  \begin{align*}
    d(p,q)\geqslant \dw(p,q)\geqslant \Psi(d(p,q)).
  \end{align*}
\end{theorem}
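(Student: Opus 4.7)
The upper bound $\dw(p,q) \leqslant d(p,q)$ is already noted in the text, together with the identity $\dw(p,q) = |\ag|$ for any geodesic $\gamma$ from $p$ to $q$. My goal is to bound $|\ag|$ from below in terms of $d(p,q) = |\gamma|$ by combining Lemma~\ref{l:local} with a greedy covering of $\gamma$ by relator neighborhoods and the large girth condition.

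\emph{Greedy covering.} Starting at $p$, set $e_1$ to be the initial edge of $\gamma$; having chosen $N_{e_j}$, take $e_{j+1}$ to be the first edge of $\gamma$ strictly to the right of $N_{e_j}$. Continue until reaching $q$, producing a family $N_{e_1},\ldots,N_{e_k}$. For $j$ with $|N_{e_j}| > 1$, the neighborhood coincides with $r_{e_j}\cap\gamma$ for some relator $r_{e_j}$, and consecutive relators $r_{e_j}, r_{e_{j+1}}$ must be distinct (otherwise $e_{j+1} \in r_{e_j}\cap\gamma = N_{e_j}$, contradicting the choice of $e_{j+1}$). Therefore $r_{e_j}\cap r_{e_{j+1}}$ is contained in a piece of size at most $P(r_{e_j})$, which by the lacunarity condition is strictly smaller than a fixed fraction of $\gi r_{e_j}$; combining with Lemma~\ref{l:Ne} (Case I) or the $\beta$-condition (Case II), the overlap between consecutive $N_{e_j}$'s is bounded by a strict fraction of each, so $\sum_{j}|N_{e_j}| \geqslant c\,d(p,q)$ for some constant $c > 0$ depending only on $\beta,\lambda$ and on the lacunarity bound.

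\emph{Global density estimate.} Let $n_j := |N_{e_j}|$. By Lemma~\ref{l:local} and the large girth condition $\gi r_{e_j}\geqslant \Delta(\di r_{e_j})\geqslant \Delta(n_j)$, together with monotonicity of $\Omega$,
$$|E(N_{e_j})\cap\ag| \;\geqslant\; \Omega(\gi r_{e_j}) \;\geqslant\; \Omega(\Delta(n_j)) \;=:\; \phi(n_j),$$
and, since $|E(N_{e_j})\cap\ag|$ is a positive integer, also $|E(N_{e_j})\cap\ag| \geqslant 1$. Summing over the essentially disjoint family $\{N_{e_j}\}$ and absorbing the bounded multiplicities from overlaps into a positive constant $C$,
$$|\ag| \;\geqslant\; C\cdot\max\!\Bigl(k,\; \phi\bigl(\tfrac{cd}{k}\bigr)\Bigr),$$
where $d = d(p,q)$ and we used $\max_j n_j \geqslant \tfrac{1}{k}\sum_j n_j \geqslant cd/k$.

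\emph{Dichotomy and conclusion.} For any $k \in [1, cd]$, one has $\max\!\bigl(k, \phi(cd/k)\bigr) \geqslant \min\!\bigl(\sqrt{cd},\, \phi(\sqrt{cd})\bigr)$: indeed if $k \geqslant \sqrt{cd}$ then the first argument dominates, while if $k < \sqrt{cd}$ then $cd/k > \sqrt{cd}$, so $\phi(cd/k) > \phi(\sqrt{cd})$ by monotonicity. Setting
$$\Psi(d) \;:=\; C\cdot\min\!\bigl(\sqrt{cd},\, \phi(\sqrt{cd})\bigr),$$
we obtain a continuous function of $d$ with $\Psi(d)\to \infty$ as $d\to \infty$; after passing to a continuous, strictly increasing minorant (still tending to infinity and vanishing at $0$), $\Psi$ becomes a homeomorphism of $[0,+\infty)$ satisfying $\dw(p,q)\geqslant \Psi(d(p,q))$. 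I anticipate that the main technical obstacle is the overlap analysis in the greedy step, which relies crucially on the small cancellation and $\beta$-separation conditions and on the lacunarity bound on $P(r_i)$ to ensure that consecutive neighborhoods $N_{e_j}, N_{e_{j+1}}$ share only a small piece, leaving $\sum_j n_j$ proportional to $d(p,q)$.
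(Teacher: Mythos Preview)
Your overall strategy --- cover $\gamma$ by relator neighborhoods, apply Lemma~\ref{l:local} locally, then run a dichotomy --- is exactly the paper's, but your execution of the ``many neighborhoods'' branch has a genuine gap.

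The problem is the claim that the greedy family $\{N_{e_j}\}$ has \emph{bounded multiplicities}. You only control the overlap of \emph{consecutive} neighborhoods, and only as a fraction of $|N_{e_j}|$ (via the $C'(\lambda)$--condition, not the lacunarity condition as you wrote). This does not bound how many neighborhoods can cover a single edge. Concretely: suppose an edge $e$ lies in $N_{e_{j_1}},\ldots,N_{e_{j_m}}$ with $j_1<\cdots<j_m$, all with $|N_{e_{j_i}}|>1$. The relators $r_{e_{j_i}}$ are pairwise distinct, so for each $i<m$ the interval $[\text{pos}(e),b_{j_i}]$ lies in the piece $r_{e_{j_i}}\cap r_{e_{j_m}}$, hence has length $<\lambda\,\gi r_{e_{j_m}}$. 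Since the $b_{j_i}$ are $m-1$ distinct integers in an interval of length $<\lambda\,\gi r_{e_{j_m}}$, you only get $m\lesssim \lambda\,\gi r_{e_{j_m}}$, which is \emph{not} a universal bound. In particular, when the largest neighborhood has size about $\sqrt{d}$, multiplicities can be of order $\sqrt{d}$, so the inequality $|\ag|\geqslant Ck$ collapses to $|\ag|\gtrsim k/\sqrt{d}$, which in the regime $k\approx\sqrt{d}$ gives only a constant.

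The paper sidesteps this entirely by doing the dichotomy on $\max_e|N_e|$ rather than on $k$. If some $|N_e|\geqslant\sqrt d$, that single neighborhood already gives $|\ag|\geqslant\Omega(\Delta(\sqrt d))$ via Lemma~\ref{l:local} and the large girth condition. If instead \emph{every} $|N_e|<\sqrt d$, the uniform smallness lets one extract a \emph{pairwise disjoint} subfamily of size $\geqslant\sqrt d/2$ (pick edges spaced $2\sqrt d$ apart; each $N_e$ is confined to a window of radius $\sqrt d$ around $e$). With disjointness there is no overcounting, and summing $|E(N_{e_j})\cap\ag|\geqslant 1$ yields $|\ag|\geqslant\sqrt d/2$ directly. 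Your argument becomes correct if you replace the greedy covering by this disjoint family in the small-neighborhood case.
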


\begin{proof}
  The left inequality is clear. Now we prove the right one. Define $\Psi \colon [0,+\infty) \to [0,+\infty)$ as a homeomorphism  such that $\Psi(d) \leqslant \min \{\sqrt{d}/2,\\ \Omega( \Delta(\sqrt d))  \}$. For given $p,q$, we denote $d:=d(p,q)$. If $d=1$ then $1=\dw(p,q)\geqslant \Psi(d(p,q))=\Psi(1)$. Further we assume $d\geqslant 2$.

We work with the family $\{ N_e^{\gamma} \}_{e\subseteq \gamma}$ of relator neighborhoods, as defined in Subsection~\ref{s:local}.
We consider separately the following two cases.
\medskip

\noindent
\emph{(Case 1): There is and edge $e$ in $\gamma$ with $|E(N_e^{\gamma})|\geqslant \sqrt d$}. Observe that
then $e\notin \ag$.
For such an edge $e$, by the large girth condition from Definition~\ref{d:cond}, we have 
\begin{align*}
\gi r_e^{\gamma} \geqslant \Delta(\di r_e^{\gamma}) \geqslant \Delta (|E(N_e^{\gamma})|)\geqslant \Delta (\sqrt d),
\end{align*}
and thus, by Lemma~\ref{l:local}, we obtain
\begin{align}
\label{e:g1}
|\ag|\geqslant |\ag \cap E(N_e^{\gamma})| \geqslant \Omega (\gi r_e^{\gamma}) \geqslant \Omega( \Delta(\sqrt d)).
\end{align}
\medskip

\noindent
{\emph{(Case 2):  For every edge $e$ in $\gamma$ we have $|E(N_e^{\gamma})| < \sqrt d$}}.
Then, as in the proof of 
\cite[Lemma 2.1]{AO0} there is a family $\{e_1,e_2,\ldots, e_k\}$ of edges in $\gamma$, such that $N_{e_i}^{\gamma}\cap N_{e_j}^{\gamma}=\emptyset$, for $i\neq j$ and $k\geqslant \sqrt d/2$. Therefore, by Lemma~\ref{l:local} and by the fact that $|\ag \cap E(N_{e_i}^{\gamma})|=1$ for $e_i\in \ag$, we have
\begin{align}
\label{e:g2}
|\ag| \geqslant \sum_{i=1}^{k} |\ag \cap E(N_{e_i}^{\gamma})| \geqslant \sum_{i=1}^{k} 1 \geqslant \sqrt d/2.
\end{align}
Combining formulas (\ref{e:g1}) and (\ref{e:g2}), we obtain 
\begin{align*}
   \dw(p,q)\geqslant |\ag| \geqslant \Psi(d(p,q)).
  \end{align*}
\end{proof}


\section{PW non-A groups}
\label{s:pwna}

In this section we prove
Theorem 2 from the Introduction; see Theorem~\ref{t:main} below. 
For the whole section we assume that $\Theta$ consists of $D$--regular graphs, for some $D\geqslant 3$. (This assumption could be `coarsely weakened'; see \cite{Willett}.)
We fix $\lambda \in (0,1/24]$ and -- using Theorem~\ref{l:c2LLL} -- a labelling $(\Theta,m)=((\txxT_n,m_n))_{n\in \bN}$ with the following property: no
$m_n$--labelling of a path of length at least $\lambda\, \gi \txT_n$ in  $\txT_n$ appears as the $m$--labelling of some other path in $\txT$.

First, in Subsection~\ref{s:sclab}, we derive from $(\Theta,m)$ an appropriate
 sequence of labelled graphs $(\stxxT,\stxxm)$ -- it consists of coverings of graphs $(\txT_n)$ with the induced labelling. Then, in Subsection~\ref{s:group}, we use the sequence $(\stxxT,\stxxm)$ to define
 a graphical small cancellation group $G$ with the required properties.


\subsection{From \texorpdfstring{$(\txxT,\txxml)$}{(Theta,m)} to \texorpdfstring{$(\hxxT,\hxxm)$}{(tilde Theta,tilde m)} and \texorpdfstring{$(\stxxT,\stxxm)$}{(hat Theta,hat m)}.}
\label{s:sclab}
In this subsection, we define pieces in $(\txxT,m)$ (and $(\hxxT,\hxxm)$, $(\stxxT,\stxxm)$) and $P(\txxT_n)$ 
(and $P(\hxxT)$, $P(\stxxT)$) in the following way, corresponding
to definitions from Subsection~\ref{s:c'cpl} and Section~\ref{s:wlac}. 

Let $p_1\colon p\to \txxT$ be a path in $\txxT$, that is, a locally injective simplicial map from a graph $p$ homeomorphic to a segment.
The path $p_1$ is a \emph{piece} in $(\txxT,m)$ if there exists a different path $p_2\colon p\to \txxT$ inducing the same labelling of $p$. In particular, every piece in $(\txxT_n,m_n)$ has length smaller
than $\lambda\, \gi \txT_n$.
For a vertex $v\in \txxT_n$, by $P_v(\txxT_n)$ we denote
the number of edges of $\txxT_n$ contained in (images of) all pieces containing $v$. Consequently, $P(\txxT_n)$ denotes the maximal number among $P_v(\txxT_n)$, for vertices $v$ of $\txxT_n$.
Observe that for the graphical presentation $\langle \Gamma \; | \; \txxT \rangle$ given by the labelling $m$ the associated complex $X$, as defined in Subsection~\ref{s:c'cpl} has the following property.
The pieces in $X$ (as defined in Subsection~\ref{s:c'cpl}) are exactly the compositions $p\to r_i \stackrel{\varphi_i}{\longrightarrow} X$ where $p\to r_i$ is a piece in a copy $r_i$ of some $\txxT_n$ as defined above. 

Labelled graphs $(\hxxT,\hxxm)$ and $(\stxxT,\stxxm)$ will be defined below as appropriate \emph{coverings of labelled graph} $(\Theta,m)$, that is,
graph coverings with labellings induced from $m$ by the covering map.
A path $p_1\colon p\to \hxxT$ (respectively, $p_1\colon p\to \stxxT$) is a \emph{piece} in $(\hxxT,\hxxm)$ (respectively, $(\stxxT,\stxxm)$) if there is a different path $p_2\colon p\to \hxxT$ (respectively, $p_2\colon p\to \stxxT$) inducing the same labelling of $p$, and such that the following holds.
There is no $n$ and a covering graph automorphism $\alpha\colon \hxxT_n \to \hxxT_n$ (respectively, 
$\alpha\colon \stxxT_n \to \stxxT_n$) that make the following diagrams commutative.
$$  \begindc{\commdiag}[20]
\obj(13,1)[a]{$\hxxT_n$}
\obj(12,1)[a']{}
\obj(35,16)[c]{$\hxxT_n$}
\obj(35,17)[c']{}
\obj(12,16)[d]{$p$}
\obj(12,17)[d']{}
\mor{a'}{c}{$\alpha$}
\mor{d'}{c'}{$p_1$}
\mor{d'}{a'}{$p_2$}[\atright,\solidarrow]
\obj(73,1)[e]{$\stxxT_n$}
\obj(72,1)[e']{}
\obj(95,16)[g]{$\stxxT_n$}
\obj(95,17)[g']{}
\obj(72,16)[h]{$p$}
\obj(72,17)[h']{}
\mor{e'}{g}{$\alpha$}
\mor{h'}{g'}{$p_1$}
\mor{h'}{e'}{$p_2$}[\atright,\solidarrow]
\enddc  $$
The numbers $P(\hxxT_n)$ and $P(\stxxT_n)$ are defined correspondingly. 
Again, pieces in $(\hxxT,\hxxm)$ and $(\stxxT,\stxxm)$ correspond to pieces
in the complexes associated with the graphical presentations $\langle \Gamma \; | \; \hxxT \rangle$ and $\langle \Gamma \; | \; \stxxT \rangle$.

In what follows the labelled graph covering $(\hxxT,\hxxm)$ will be chosen so that $\gi \hxxT_n$ is large compared to $P(\hxxT_n)$; see Lemma~\ref{l:plwg}. We assume that all the coverings $\hxxT_n \to \tn$, $\stxxT_n \to \hxxT_n$, and
$\stxxT_n \to \tn$ are regular (in other words, normal), that is, the corresponding subgroups of fundamental groups are normal. This implies that the groups of covering graph automorphisms act
transitively on fibers. Recall, that the \emph{$\mathbb Z_2$--homology cover} $\widetilde{\Sigma} \to  \Sigma$
is the cover corresponding to the characteristic subgroup of $\pi_1(\Sigma)$ being the kernel of 
the abelianization map $\pi_1(\Sigma) \to H_1(\Sigma; \mathbb{Z}_2)$. The covers $\stxxT_n \to \hxxT_n$ are  
$\mathbb Z_2$--homology covers, and the covers $\hxxT_n \to \tn$ may be (thought of as) iterated 
$\mathbb Z_2$--homology covers. Observe that then all the coverings $\hxxT_n\to \txxT_n$, $\stxxT_n \to \hxxT_n$, and $\stxxT_n \to \txxT_n$ are regular. It is so because characteristic subgroups of normal subgroups
are themselves normal subgroups.

\begin{lemma}
	\label{l:covpiece}
	Every piece in $(\hxxT_n,\hxxm_n)$ (respectively, in $(\stxxT_n,\stxxm_n)$) has length smaller than
	$\lambda \, \gi \txxT_n$. Furthermore, $P(\txxT_n)=P(\hxxT_n)=P(\stxxT_n)$.
\end{lemma}
\begin{proof}
	We treat the case of $(\hxxT_n,\hxxm_n)$ -- the other case can be treated the same way.
	Suppose there is a piece $p_1\colon p\to \hxxT_n$ with $p$ of length at least $\lambda \, \gi \txxT_n$.
	Restricting the domain, we may assume that $|p|<\gi \txxT_n$.
	Then, necessarily, there is a different
	path $p_2\colon p\to \hxxT_n$ inducing the same labelling of $p$ (otherwise we would get too long piece in $(\txxT,m)$). Since the group of covering automorphisms acts transitively on fibers of $\hxxT_n \to \txxT_n$, if there did not exist a covering automorphism $\alpha\colon \hxxT_n \to \hxxT_n$
	such that the diagram
$$  \begindc{\commdiag}[20]
\obj(13,1)[a]{$\hxxT_n$}
\obj(12,1)[a']{}
\obj(35,16)[c]{$\hxxT_n$}
\obj(35,17)[c']{}
\obj(12,16)[d]{$p$}
\obj(12,17)[d']{}
\mor{a'}{c}{$\alpha$}
\mor{d'}{c'}{$p_1$}
\mor{d'}{a'}{$p_2$}[\atright,\solidarrow]
\enddc  $$
commutes, then the compositions of $p_1,p_2$ with the covering map $\hxxT_n \to \txxT_n$ would result
in different paths in $\txxT_n$ inducing the same labelling of $p$. This would lead to contradiction.
Hence such $\alpha$ exists for every $p_2$ and it follows that $p_1$ is not a piece -- contradiction.

The second statement follows from the fact that the union of (images of) all the pieces containing a given vertex $v$ in $\hxxT_n$ is mapped isometrically onto the union of (images of) all the pieces containing
the image of $v$ in $\tn$.
\end{proof}

For the $\mathbb Z_2$--homology cover $\stxxT_n \to \hxxT_n$, as observed by Wise (see \cite[Section 9]{W-qch} and \cite[Section 10.3]{W-ln}), every $\stxxT_n$ is equipped with a structure of graph with walls -- a wall consists of edges in $\stxxT_n$ being preimages
of a given edge in $\hxxT_n$ (see also \cite[Section 3]{AGS} and \cite[Lemma 6]{Ost}). 
With this system of walls we obtain the following lemma, which will allow us to conclude that the $C'(\lambda)$--complex associated with the graphical presentation $\langle \Gamma \, |\, \stxxT \rangle$
satisfies the proper lacunary walling condition from Definition~\ref{d:cond}.
\begin{lemma}
\label{l:plwg}
There exist coverings $(\hxxT_n,\hxxm_n) \to (\txxT_n,m_n)$ of appropriately large girth such that the following holds.
There exist: $\beta\in (0,1/2]$, and homeomorphisms $\Phi,\Omega,\Delta\colon [0,+\infty) \to [0,+\infty)$
such that, for every $n\in \bN$ we have:
\begin{enumerate}
\item
the degree of $\stxxT_n$ is bounded by $D$;
\item
the length of each piece in $\stxxT_n$ is at most
$\lambda \, \gi \stxxT_n$;
\item
$\stxxT_n$ satisfies the $(\beta,\Phi)$--separation property;
\item
$\Phi((\beta - \lambda) \gi \, \stxxT_n)-4\, P(\stxxT_n) \geqslant \Omega(\gi \stxxT_n)$;
\item
$\gi \stxxT_n \geqslant \Delta(\di \stxxT_n)$. 
\end{enumerate}
\end{lemma}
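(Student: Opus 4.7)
The plan is to construct $\hxxT_n$ as a covering of $\txxT_n$ whose girth is substantially larger than $\gi\txxT_n$, and then take $\stxxT_n$ to be the $\mathbb Z_2$--homology cover of $\hxxT_n$ equipped with the Wise wall structure, where a wall is the set of preimages of an edge of $\hxxT_n$. Both labellings $\hxxm_n$ and $\stxxm_n$ are pulled back from $m_n$ along the covering maps. I then verify the five conditions uniformly in $n$.

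Conditions (1) and (2) are essentially formal. Graph coverings are local isomorphisms, so $D$--regularity passes from $\txxT_n$ to $\stxxT_n$, giving (1). For (2), Theorem~\ref{l:c2LLL} ensures that simple paths inside any piece of $(\txxT_n,m_n)$ have length less than $\lambda\,\gi\txxT_n$; such short configurations lift isomorphically through any cover, yielding $P(\hxxT_n)=P(\stxxT_n)=P(\txxT_n)$ and piece diameter at most $\lambda\,\gi\txxT_n\le\lambda\,\gi\stxxT_n$ (covers never decrease girth).

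For condition (3), the $\beta$--part is the standard feature of the $\mathbb Z_2$--homology cover: two distinct preimages of an edge $e\in\hxxT_n$ are connected in $\stxxT_n$ only through paths whose projection, concatenated with $e$, is a closed walk of $\hxxT_n$ using $e$ an odd number of times and hence nontrivial in $H_1(\hxxT_n,\mathbb Z_2)$, so of length at least $\gi\hxxT_n$. Relating $\gi\stxxT_n$ to $\gi\hxxT_n$ through a universal factor fixes some $\beta\in(0,1/2]$. The $\Phi$--condition, with $\Phi$ linear, follows by a pairing argument on a geodesic: any edge whose wall fails to separate the geodesic's endpoints must be matched with a further edge of the same wall on the geodesic, and the $\beta$--separation forces such pairs to be widely spaced, so the unpaired edges (those in $\ag$) have linear density.

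Conditions (4) and (5) are the crux and dictate the choice of $\hxxT_n$. For (4), writing $\Phi(t)\ge ct$ and using $P(\stxxT_n)\le\lambda\,\gi\txxT_n$, the required inequality reduces to $\gi\stxxT_n\ge K\,\gi\txxT_n$ for some universal $K$, which is arranged by choosing $\hxxT_n$ with sufficiently large girth --- possible since $\pi_1(\txxT_n)$ is a finitely generated free group, hence residually finite, so finite covers of arbitrary girth exist. The resulting $\Omega$ may be taken linear. For (5), one combines the given bound $\di\txxT_n\le A\,\gi\txxT_n$ on the base with explicit control of cover degrees (the $\mathbb Z_2$--homology cover has degree at most $2^{|E(\hxxT_n)|}$) to bound $\di\stxxT_n$ by a fixed function of $\gi\stxxT_n$, producing the homeomorphism $\Delta$. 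The principal obstacle is precisely this joint quantitative balance of (4) and (5): $\gi\hxxT_n$ must be much larger than $\gi\txxT_n$ to force lacunarity, yet not so large that the diameter of $\stxxT_n$ outgrows its girth. The delicate point is selecting, for each $n$, a finite cover of $\txxT_n$ whose diameter grows only linearly with the new girth; this quantitative residual finiteness statement for the family $(\txxT_n)$ is made possible by the uniform structural hypotheses imposed on $\Theta$ at the start of Section~\ref{s:constr}.
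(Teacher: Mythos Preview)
Your treatment of (1)--(4) follows the paper's line, with one slip: $P(\stxxT_n)$ is the maximal number of \emph{edges} in a piece, not its diameter, so the bound $P(\stxxT_n)\le\lambda\,\gi\txxT_n$ is unjustified --- a piece is a tree of small diameter but in a graph of degree $D$ may carry many more edges. This does not damage (4): what matters is only that $P(\stxxT_n)=P(\txxT_n)$ is a fixed finite number depending on $n$ alone, while $\gi\stxxT_n$ can be pushed arbitrarily high by the choice of cover. Since $\Phi$ is a homeomorphism, $\Phi((\beta-\lambda)\gi\stxxT_n)$ eventually exceeds $12\,P(\txxT_n)$, after which $\Omega(t)=\tfrac12\Phi((\beta-\lambda)t)$ works; no linearity of $\Phi$ is required.

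The substantive issue is your handling of (5), which you call the ``principal obstacle'' and propose to resolve via a quantitative residual finiteness claim: that each $\txxT_n$ admits finite covers with diameter linear in the new girth. This claim is not proved in the paper, is not an evident consequence of the standing hypotheses on $\Theta$ (those constrain the base graphs, not their covers), and --- crucially --- is unnecessary. The paper dispatches (5) in one line: $\gi\stxxT_n\ge\gi\txxT_n\to\infty$. Given any positive sequences $(g_n),(d_n)$ with $g_n\to\infty$, a homeomorphism $\Delta$ of $[0,\infty)$ with $\Delta(d_n)\le g_n$ always exists, since for each level only finitely many $g_n$ lie below it and a sufficiently slowly growing $\Delta$ meets all the (locally finite) constraints. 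There is thus no tension between (4) and (5): take the covers as large as (4) demands, and (5) is automatic.
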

\begin{proof}
(1) is immediate. (2) follows from Lemma~\ref{l:covpiece}. The existence of $\Delta$ satisfying 
(5) follows from the fact that $\gi \stxxT_n \to \infty$ as $n\to \infty$.

For (3), the $\beta$-condition from Definition~\ref{d:sep} holds with $\beta=1/2$, by \cite[Lemma 7.1]{AO}.
Now, we show how to choose $\Phi \colon [0,+\infty)\to [0,+\infty)$ such that the $\Phi$-condition holds.
First, we choose inductively the coverings $(\hxxT_n,\hxxm_n) \to (\txxT_n,m_n)$ so that the following condition $({\ast})$
is satisfied, for every $n$:
\begin{center}
	 $(\gi \hxxT_n)/3 -4\, P(\stxxT_n)>(\gi \hxxT_n)/4$, and \\$({\ast})\; \;$ there does not exist a geodesic of length at least $(\gi \hxxT_n)/3$ \\in $\stxxT_j$, for $j<n$.
\end{center}
Such a choice is obviously possible because geodesics are simple paths, the graphs $\hxxT_j$ are finite, and
$P(\txxT_n)=P(\hxxT_n)=P(\stxxT_n)$, by Lemma~\ref{l:covpiece}.
Now, for a given number $N\in \mathbb{N}$, we define a number $\widetilde{\Phi}(N)$ as follows:
$\widetilde{\Phi}(0)=0$, and for $N>0$ we find a maximal $n$ such that $N\geqslant (\gi \hxxT_n)/3$, and
we set $\widetilde{\Phi}(N):=\min \{ N,\gi \hxxT_n\}$. Consider a geodesic $\gamma$ of length $N$ in some
$\stxxT_j$. By the condition $(\ast)$, we have $j\geqslant n$, for $n$ as above.
Let $\widetilde{\gamma}$ be the image of $\gamma$ by the projection $\stxxT_j\to \hxxT_j$.
Then $\widetilde{\gamma}$ is an \emph{admissible path} in $\hxxT_j$ in the sense of \cite[Definition 3.5]{AGS}, and the edge-length
of $\widetilde{\gamma}$ is $N$ as well \cite[Lemma 3.6 and Proposition 3.8]{AGS}. Since $\gamma$ is a geodesic,
the path $\widetilde{\gamma}$ has no backtracks \cite[Remark 3.9]{AGS}. Hence, if $\widetilde{\gamma}$ does not contain any loop, then every edge in $\widetilde{\gamma}$ is traversed only once, and consequently,
the number of edges in $\gamma$ whose walls have exactly one edge in common with $\gamma$ is $N$.
If $\widetilde{\gamma}$ contains a loop then, necessarily, the length of this loop is at least
$\gi \hxxT_n$. By \cite[Lemma 3.12]{AGS}, every edge on the loop is traversed exactly once, so the number 
 of edges in $\gamma$ whose walls have exactly one edge in common with $\gamma$ is at least $\gi \hxxT_n$.
Combining the two cases, we get that the number of edges in $\gamma$ whose walls have exactly one edge in common with $\gamma$ is at least $\widetilde{\Phi}(N)$. Therefore, there exists a homeomorphism 
$\Phi \colon [0,+\infty) \to [0,+\infty)$ satisfying $\Phi(x)\geqslant \widetilde{\Phi}(x)$, for all $x\in  [0,+\infty)$, and ensuring the $\Phi$-condition. Observe that, additionally, $\Phi$ can be chosen so that $\Phi((\gi \hxxT_n)/3)\geqslant (\gi \hxxT_n)/3$, for all $n$.

Since $\beta=1/2$ and $\lambda \leqslant 1/24$, by the choice of $\Phi$, we have
\begin{align*}
\Phi((\beta - \lambda) \gi \, \stxxT_n)>\Phi((\gi \, \hxxT_n)/3)\geqslant(\gi \, \hxxT_n)/3.
\end{align*}
Hence, by the condition $(\ast)$, there exists $\Omega$ as in (4).
\end{proof}


\subsection{The group}
\label{s:group}
Now we construct a coarsely non-amenable group acting properly on a CAT(0) cubical complex announced in Theorem 2. 
The group is defined by a graphical small cancellation presentation over the sequence $\stxxT$; see Section~\ref{s:grexp} for notations. 
Again, $\Gamma$ is a bouquet of loops, and the local isometries $\varphi_n\colon \stxxT_n \to \Gamma$ are defined
by the labellings $\stxxm_n$. We use $(\stxxT,\stxxm)$ from Lemma~\ref{l:plwg}, that is, satisfying the conditions (1)-(5) there.

\begin{theorem}[PW non-A group]
\label{t:main}
Let $G$ be the group defined by the graphical presentation $\langle \Gamma \, |\, \stxxT \rangle$, where the local isometries
$\stxxT_n \to \Gamma$ are defined by labellings $\stxxm_n$. Then $G$ acts properly on a CAT(0) cubical complex and $G$ does not have property A. 
\end{theorem}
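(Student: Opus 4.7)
The plan is to verify that the complex on which $G$ acts fits the proper lacunary walling framework of Section~\ref{s:wlac}, so that Theorem~\ref{p:lsp} immediately delivers the proper CAT(0) cubical action, and then to invoke Willett's obstruction \cite{Willett} together with the isometric embedding of Theorem~\ref{t:mainex} to rule out property A.

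First, I would form the coned-off space $Y$ obtained by gluing, for each $n$, the cone $\mr{cone}\, \stxxT_n$ to the bouquet $\Gamma$ along the local isometry $\varphi_n$ determined by $\stxxm_n$. By construction $\pi_1(Y)=G$, and its universal cover $X$ is a simply connected $C'(\lambda)$--complex on which $G$ acts freely. The $1$--skeleton $\xj$ is precisely the Cayley graph of $G$ for the presentation $\langle \Gamma \mid \stxxT \rangle$, and the relators of $X$ are lifted copies of the graphs $\stxxT_n$, each inheriting the wall structure coming from the $\mathbb Z_2$--homology cover of $\hxxT_n$ as recalled in Subsection~\ref{s:sclab}. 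The five clauses of Lemma~\ref{l:plwg} for $\stxxT_n$ translate, after transfer to these lifted copies, into the five bullets of Definition~\ref{d:cond}; that is, $X$ satisfies the proper lacunary walling condition with the constants $\beta, \lambda$ and homeomorphisms $\Phi,\Omega,\Delta$ provided by that lemma.

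Consequently, Proposition~\ref{l:walls} makes $(\xz,\mathcal W)$ into a space with walls, and Theorem~\ref{p:lsp} yields a homeomorphism $\Psi$ with $\dw(p,q)\geqslant \Psi(d(p,q))$ for all $p,q\in \xz$. Since $G$ acts freely on $\xz$ by deck transformations, the set $\{g\in G : d(p,gp)\leqslant R\}$ is finite for every $R$; the inequality then forces $\{g\in G : \dw(p,gp)\leqslant R\}$ to be finite as well, so the $G$--action on $(\xz,\mathcal W)$ is proper. Invoking the equivalence between proper actions on spaces with walls and proper actions on CAT(0) cubical complexes \cites{HP,Nica,ChaNib}, I obtain a proper action of $G$ on a CAT(0) cubical complex, settling the first half of the theorem.

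For the second half, Theorem~\ref{t:mainex} asserts that $\stxxT$ embeds isometrically into the Cayley graph $\xj$ of $G$, hence coarsely into $G$. By construction, $\stxxT$ is a disjoint union of finite graphs of uniformly bounded degree $D$ with girths tending to infinity and diameters controlled by a fixed multiple of the girth, inherited from $\Theta$ through the coverings of Subsection~\ref{s:sclab}; by the main theorem of \cite{Willett}, such a sequence fails property A as a metric space. Since property A passes to coarse subspaces (see e.g.\ \cite{NowakYu}), the Cayley graph $\xj$, and therefore $G$, cannot have property A. The main thing to verify carefully is that the globally defined walls on $X$, obtained by transitively extending wall equivalence across overlapping relators, still satisfy the $\beta$-, $\Phi$-, and lacunarity conditions demanded in Definition~\ref{d:cond}; this is precisely what Lemma~\ref{l:plwg} has been tailored to package, so the remaining work is essentially verification and assembly rather than any new estimate.
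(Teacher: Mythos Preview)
Your argument is correct and follows the paper's own proof essentially step for step: establish $C'(\lambda)$ and the proper lacunary walling condition via Lemma~\ref{l:plwg}, apply Theorem~\ref{p:lsp} and the walls-to-cubes machinery for the proper CAT(0) action, and invoke Willett for the failure of property~A. Two small corrections: Theorem~\ref{t:mainex} is literally about the group built from $(\Theta,m)$, not from $(\stxxT,\stxxm)$, so (as the paper does) you should appeal directly to Lemma~\ref{l:remb} once Lemma~\ref{l:plwg}(2) gives $C'(\lambda)$; and the linear diameter-to-girth bound need not survive the coverings of Subsection~\ref{s:sclab}, but this is harmless since Willett's theorem only needs $D$--regularity with $D\geqslant 3$ and $\gi \stxxT_n\to\infty$. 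The paper also inserts a remark that cones differing by a covering automorphism of $\stxxT_n$ are identified---you omit this, but the definition of \emph{piece} in Subsection~\ref{s:c'cpl} already quotients by such isomorphisms, so your formulation is not actually defective.
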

\begin{proof}
Let $X$ be the complex associated to the graphical presentation $\langle \Gamma \, |\, \stxxT \rangle$, as defined in Subsection~\ref{s:c'cpl}.	
By Lemma~\ref{l:plwg}(2), $X$ is a $C'(\lambda)$--complex, where relators $(r_i)$ are copies of graphs $(\stxxT_n)$.
	

Therefore, by Lemma~\ref{l:remb}, the graphs $\stxxT_n$ embed isometrically into the Cayley graph $X^{(1)}$ of $G$. Since $\stxxT_n$ are regular of degree $D\geqslant 3$ and with girths tending to infinity, by a result of Willett \cite{Willett}, the graph $X^{(1)}$ and, consequently, $G$ have no property A.

To show that $G$ acts properly on a CAT(0) cubical complex it is enough \cites{Nica,ChaNib} to show that $G$ acts properly (with respect to the wall pseudo-metric) on a space with walls. Clearly $G$ acts properly on $X^{(0)}$ and thus it remains to show that $X$ satisfies the proper lacunary walling condition to conclude, from Theorem~\ref{p:lsp}, that $G$ acts properly on $(X^{(0)},\mathcal W)$. The proper lacunary walling condition follows from Lemma~\ref{l:plwg}: separation follows from (3), lacunarity from (4), and the large girth condition follows from (5).
\end{proof}


\begin{bibdiv}
\begin{biblist}

\bib{AloGry}{article}{
   author={Alon, N.},
   author={Grytczuk, J.},
   author={Ha{\l}uszczak, M.},
   author={Riordan, O.},
   title={Nonrepetitive colorings of graphs},
   note={Random structures and algorithms (Pozna\'n, 2001)},
   journal={Random Structures Algorithms},
   volume={21},
   date={2002},
   number={3-4},
   pages={336--346},
   issn={1042-9832},
}

\bib{AloSpe}{book}{
   author={Alon, N.},
   author={Spencer, J. H.},
   title={The probabilistic method},
   series={Wiley-Interscience Series in Discrete Mathematics and
   Optimization},
   edition={2},
   note={With an appendix on the life and work of Paul Erd\H os},
   publisher={Wiley-Interscience [John Wiley \& Sons], New York},
   date={2000},
   pages={xviii+301},
   isbn={0-471-37046-0},
}

\bib{claire}{article}{
   author={Anantharaman-Delaroche, C.},
   title={Amenability and exactness for dynamical systems and their $C^\ast$-algebras},
   journal={Trans. Amer. Math. Soc.},
   volume={354},
   date={2002},
   number={10},
   pages={4153--4178 (electronic)},
   issn={0002-9947},
}

\bib{AD}{article}{
   author={Arzhantseva, G.},
   author={Delzant, T.},
TITLE = {Examples of random groups},
status =   {preprint},
eprint = {http://www.mat.univie.ac.at/~arjantseva/publicationsGA.html},
 YEAR = {2008}}

\bib{AGS}{article}{
   author={Arzhantseva, G.},
   author={Guentner, E.},
   author={{\v{S}}pakula, J.},
   title={Coarse non-amenability and coarse embeddings},
   journal={Geom. Funct. Anal.},
   volume={22},
   date={2012},
   number={1},
   pages={22--36},
   issn={1016-443X},
}

\bib{AO0}{article}{
   author={Arzhantseva, G.},
   author={Osajda, D.},
   title={Infinitely presented small cancellation groups have the Haagerup
   property},
   journal={J. Topol. Anal.},
   volume={7},
   date={2015},
   number={3},
   pages={389--406},
   issn={1793-5253},
}

\bib{AO}{article}{
   author={Arzhantseva, G.},
   author={Osajda, D.},
TITLE = {Graphical small cancellation groups with the Haagerup property},
 status =   {preprint},
 eprint = {arXiv:1404.6807v1},
 YEAR = {2014}}

\bib{BGW}{article}{
   author={Baum, P.},
author={Guentner, E.},
author={Willett, R.},
TITLE = {Expanders, exact crossed products, and the
Baum-Connes conjecture},
   journal={Annals of K-Theory},
   VOLUME = {1},
   YEAR = {2016},
   NUMBER = {2},
   PAGES = {155--208}
}

%

\bib{ChaNib}{article}{
   author={Chatterji, I.},
   author={Niblo, G.},
   title={From wall spaces to $\rm CAT(0)$ cube complexes},
   journal={Internat. J. Algebra Comput.},
   volume={15},
   date={2005},
   number={5-6},
   pages={875--885},
   issn={0218-1967},
}

\bib{Cor}{article}{
   author={Cornulier, Y.},
TITLE = {Group actions with commensurated subsets, wallings and cubings},
 status =   {preprint},
 eprint = { arXiv:1302.5982 },
 YEAR = {2013}}

\bib{Cou}{article}{
   author={Coulon, R.},
   title={On the geometry of Burnside quotients of torsion free hyperbolic
   groups},
   journal={Internat. J. Algebra Comput.},
   volume={24},
   date={2014},
   number={3},
   pages={251--345},
   issn={0218-1967},
}


\bib{Gro}{article}{
   author={Gromov, M.},
   title={Random walk in random groups},
   journal={Geom. Funct. Anal.},
   volume={13},
   date={2003},
   number={1},
   pages={73--146},
   issn={1016-443X},
}

\bib{Gruber}{article}{
	author={Gruber, D.},
	title={Groups with graphical $C(6)$ and $C(7)$ small cancellation
		presentations},
	journal={Trans. Amer. Math. Soc.},
	volume={367},
	date={2015},
	number={3},
	pages={2051--2078},
	issn={0002-9947},
}

\bib{GH}{article}{
	author={Higson, N.},
	author={Guentner, E.},
	title={Group $C^\ast$-algebras and $K$-theory},
	conference={
		title={Noncommutative geometry},
	},
	book={
		series={Lecture Notes in Math.},
		volume={1831},
		publisher={Springer, Berlin},
	},
	date={2004},
	pages={137--251},
}

\bib{GK}{article}{
   author={Guentner, E.},
   author={Kaminker, J.},
   title={Exactness and the Novikov conjecture},
   journal={Topology},
   volume={41},
   date={2002},
   number={2},
   pages={411--418},
   issn={0040-9383},
}

\bib{GK2}{article}{
   author={Guentner, E.},
   author={Kaminker, J.},
   title={Geometric and analytic properties of groups},
   conference={
      title={Noncommutative geometry},
   },
   book={
      series={Lecture Notes in Math.},
      volume={1831},
      publisher={Springer, Berlin},
   },
   date={2004},
   pages={253--262},
}

\bib{HP}{article}{
   author={Haglund, F.},
   author={Paulin, F.},
   title={Simplicit\'e de groupes d'automorphismes d'espaces \`a courbure
   n\'egative},
   conference={
      title={The Epstein birthday schrift},
   },
   book={
      series={Geom. Topol. Monogr.},
      volume={1},
      publisher={Geom. Topol. Publ., Coventry},
   },
   date={1998},
   pages={181--248},
}

\bib{HK}{article}{
   author={Higson, N.},
   author={Kasparov, G.},
   title={$E$-theory and $KK$-theory for groups which act properly and
   isometrically on Hilbert space},
   journal={Invent. Math.},
   volume={144},
   date={2001},
   number={1},
   pages={23--74},
   issn={0020-9910},
}

\bib{HiLaSka}{article}{
   author={Higson, N.},
   author={Lafforgue, V.},
   author={Skandalis, G.},
   title={Counterexamples to the Baum-Connes conjecture},
   journal={Geom. Funct. Anal.},
   volume={12},
   date={2002},
   number={2},
   pages={330--354},
   issn={1016-443X},
}

\bib{HigRoe}{article}{
   author={Higson, N.},
   author={Roe, J.},
   title={Amenable group actions and the Novikov conjecture},
   journal={J. Reine Angew. Math.},
   volume={519},
   date={2000},
   pages={143--153},
   issn={0075-4102},
}

\bib{LSch}{book}{
	author={Lyndon, R. C.},
	author={Schupp, P. E.},
	title={Combinatorial group theory},
	note={Ergebnisse der Mathematik und ihrer Grenzgebiete, Band 89},
	publisher={Springer-Verlag, Berlin-New York},
	date={1977},
	pages={xiv+339},
	isbn={3-540-07642-5},
}

\bib{Mat}{article}{
	author={Matou\v sek, J.},
	title={On embedding expanders into $l_p$ spaces},
	journal={Israel J. Math.},
	volume={102},
	date={1997},
	pages={189--197},
	issn={0021-2172},
}


\bib{NiSaW}{article}{
    title     ={Questions on CAT(0) cube complexes},
    author    ={Niblo, G.},
    author    ={Sageev, M.},
    author    ={Wise, D. T.},
    collection={qqq},
    status    ={preprint},
    date      ={2008},
    eprint    ={https://docs.google.com/file/d/0B-tup63120-GZTlBS2xwdG5TTmM/edit}
}

\bib{Nica}{article}{
   author={Nica, B.},
   title={Cubulating spaces with walls},
   journal={Algebr. Geom. Topol.},
   volume={4},
   date={2004},
   pages={297--309 (electronic)},
   issn={1472-2747},
}

\bib{Nowak}{article}{
   author={Nowak, P. W.},
   title={Coarsely embeddable metric spaces without Property A},
   journal={J. Funct. Anal.},
   volume={252},
   date={2007},
   number={1},
   pages={126--136},
   issn={0022-1236},
}

\bib{NowakYu}{book}{
   author={Nowak, P. W.},
   author={Yu, G.},
   title={Large scale geometry},
   series={EMS Textbooks in Mathematics},
   publisher={European Mathematical Society (EMS), Z\"urich},
   date={2012},
   pages={xiv+189},
   isbn={978-3-03719-112-5},
}

\bib{Oll}{article}{
   author={Ollivier, Y.},
   title={On a small cancellation theorem of Gromov},
   journal={Bull. Belg. Math. Soc. Simon Stevin},
   volume={13},
   date={2006},
   number={1},
   pages={75--89},
   issn={1370-1444},
}

\bib{OW}{article}{
	AUTHOR = {Ollivier, Y.},
	AUTHOR = {Wise, D. T.},
	TITLE = {Kazhdan groups with infinite outer automorphism group},
	JOURNAL = {Trans. Amer. Math. Soc.},
	VOLUME = {359},
	YEAR = {2007},
	NUMBER = {5},
	PAGES = {1959--1976}
}



\bib{Ost}{article}{
   author={Ostrovskii, M. I.},
   title={Low-distortion embeddings of graphs with large girth},
   journal={J. Funct. Anal.},
   volume={262},
   date={2012},
   number={8},
   pages={3548--3555},
   issn={0022-1236},
}

\bib{Ost2013}{book}{
	author={Ostrovskii, M. I.},
	title={Metric embeddings},
	series={De Gruyter Studies in Mathematics},
	volume={49},
	note={Bilipschitz and coarse embeddings into Banach spaces},
	publisher={De Gruyter, Berlin},
	date={2013},
	pages={xii+372},
	isbn={978-3-11-026340-4},
	isbn={978-3-11-026401-2},
}

\bib{Oz}{article}{
   author={Ozawa, N.},
   title={Amenable actions and exactness for discrete groups},
   journal={C. R. Acad. Sci. Paris S\'er. I Math.},
   volume={330},
   date={2000},
   number={8},
   pages={691--695},
   issn={0764-4442},
}

\bib{RipsSegev}{article}{
	author={Rips, E.},
	author={Segev, Y.},
	title={Torsion-free group without unique product property},
	journal={J. Algebra},
	volume={108},
	date={1987},
	number={1},
	pages={116--126},
	issn={0021-8693},
}

\bib{Roe}{book}{
   author={Roe, J.},
   title={Lectures on coarse geometry},
   series={University Lecture Series},
   volume={31},
   publisher={American Mathematical Society, Providence, RI},
   date={2003},
   pages={viii+175},
   isbn={0-8218-3332-4},
}

\bib{Sapir}{article}{
   author={Sapir, M.},
   title={A Higman embedding preserving asphericity},
   journal={J. Amer. Math. Soc.},
   volume={27},
   date={2014},
   number={1},
   pages={1--42},
   issn={0894-0347},
}

\bib{Willett-notes}{article}{
   author={Willett, R.},
   title={Some notes on property A},
   conference={
      title={Limits of graphs in group theory and computer science},
   },
   book={
      publisher={EPFL Press, Lausanne},
   },
   date={2009},
   pages={191--281},
}

\bib{Willett}{article}{
   author={Willett, R.},
   title={Property A and graphs with large girth},
   journal={J. Topol. Anal.},
   volume={3},
   date={2011},
   number={3},
   pages={377--384},
   issn={1793-5253},
}

\bib{WillettYu1}{article}{
   author={Willett, R.},
   author={Yu, G.},
   title={Higher index theory for certain expanders and Gromov monster
   groups, I},
   journal={Adv. Math.},
   volume={229},
   date={2012},
   number={3},
   pages={1380--1416},
   issn={0001-8708},
}


\bib{W-sc}{article}{
   author={Wise, D. T.},
   title={Cubulating small cancellation groups},
   journal={Geom. Funct. Anal.},
   volume={14},
   date={2004},
   number={1},
   pages={150--214},
   issn={1016-443X},
}

\bib{W-qch}{book}{
    title     ={The structure of groups with quasiconvex hierarchy},
    author    ={Wise, D. T.},
    SERIES = {Annals of Mathematics Studies},
    PUBLISHER = {Princeton University Press, Princeton, NJ, (2020) to appear},
}

\bib{W-ln}{book}{
   author={Wise, D. T.},
   title={From riches to raags: 3-manifolds, right-angled Artin groups, and
   cubical geometry},
   series={CBMS Regional Conference Series in Mathematics},
   volume={117},
   publisher={Published for the Conference Board of the Mathematical
   Sciences, Washington, DC},
   date={2012},
   pages={xiv+141},
   isbn={978-0-8218-8800-1},
}

\bib{Yu}{article}{
   author={Yu, G.},
   title={The coarse Baum-Connes conjecture for spaces which admit a uniform
   embedding into Hilbert space},
   journal={Invent. Math.},
   volume={139},
   date={2000},
   number={1},
   pages={201--240},
   issn={0020-9910},
}

\end{biblist}
\end{bibdiv}

\end{document}